\numberwithin{equation}{section}
\newtheorem{theorem}{Theorem}[section]
\newtheorem{definition}{Definition}[section]
\newtheorem{lemma}{Lemma}[section]
\newtheorem{remark}{Remark}[theorem]
\newtheorem{claim}{Claim}[theorem]
\title{The K\"ahler-Ricci flow on Fano bundles}
\author{Xin Fu}
\address{Department of Mathematics, Rutgers University, Piscataway, NJ 08854, USA}
\email{xf35@scarlatmail.rutgers.edu}
\author{Shijin Zhang}
\address{School of Mathematics and Systems Science, Beihang University, Beijing 100191, P.R. China}
\email{shijinzhang@buaa.edu.cn}
\date{}							
\begin{document}
\maketitle
\begin{abstract}
\noindent
We study the behavior of the K\"ahler-Ricci flow on some Fano bundle which is a trivial bundle on one Zariski open set. We show that if the fiber is $\mathbb{P}^{m}$ blown up at one point or some weighted projective space blown up at the orbifold point and the initial metric is in a suitable k\"ahler class, then the fibers collapse in finite time and the metrics converge sub-sequentially in Gromov-Hausdorff sense to a metric on the base.
\end{abstract}
\section{Introduction}
The Ricci flow, introduced by Hamilton (\cite{H}), has become a powerful tool to study the topology and geometric structures of Riemannian manifolds. In general, the Ricci flow develops finite time singularities.  Hamilton's program of Ricci flow with surgeries was carried out by Perelman \cite{P1, P2, P3} to prove Thurston's geometrization conjecture. The minimal model theory in birational geometry can be viewed as the complex analogue of Thurston's geometrization conjecture. Later in \cite{Cao} Cao introduced the K\"ahler-Ricci flow and use it to prove the existence of K\"ahler-Einstein metrics on manifolds with negative or vanishing first Chern class (\cite{Yau, A}).

There are intensive study of the K\"ahler-Ricci flow in the past few years. In the general type case, has been studied by Tsuji \cite{Tsuji}, Tian-Zhang \cite{Tian-Zhang}, Guo \cite{Guo}, Guo-Song-Weinkove \cite{Guo-Song-Weinkove}, Tian-Zhang \cite{Tian-Zhangzl} and references therein. In the Calabi-Yau fiber case, has been studied by Song-Tian \cite{ST1}, Song-Weinkove \cite{SW3}, Fong-Zhang \cite{Fong-Zhang}, Gill \cite{Gill}, Tosatti-Weinkove-Yang \cite{TWY} and references therein. In the Fano case, has been studied by Chen-Tian \cite{Chen-Tian1,Chen-Tian2}, Chen-Wang \cite{Chen-Wang}, Phong-Sturm \cite{Phong-Sturm}, Phong-Song-Sturm-Weinkove \cite{PSSW1, PSSW2, PSSW3}, Sesum-Tian \cite{Sesum-Tian}, Sz\'ekelyhidi \cite{Szekely}, Tian-Zhang \cite{Tian-Zhangzl1}, Tian-Zhang-Zhang-Zhang \cite{TZZZ}, Tian-Zhu \cite{Tian-Zhu1,Tian-Zhu2} and references therein.

Moreover, Song-Tian have a nice observation which relate the K\"ahler-Ricci flow with birational geometry. In \cite{ST1, ST2, ST3} they introduced the analytic minimal model program which is parallel to Mori's birational minimal model program. On one hand, K\"ahler-Ricci flow with surgery can be viewed as the complex analogue of Thurston's three dimensional geometrization conjecture. On the other hand, the surgery is canonical and correspond to the birational surgery in Mori's program such as divisorial contraction or flip, see \cite{BCHM, HM}. In this article, our Fano fiber contraction in metric sense can be viewed as the analogue of Mori fiber space  in birational geometry.

In this article we study the behavior of the finite time singularity of the K\"ahler-Ricci flow. The behavior of the K\"ahler-Ricci flow with finite time singularity has been studied by  Feldman-Ilmanen-Knopf \cite{FIK}, Ilmanen-Knopf\cite{Ilmanen-Knopf}, Song-Weinkove \cite{SW0}, Song \cite{Song1,Song2},  Song-Sz\'ekelyhidi-Weinkove \cite{SSW}, Tian \cite{Tian1,Tian2}, Zhang \cite{Zhang1, Zhang2}, Fong \cite{Fong1, Fong2}, Collins-Tosatti \cite{Collins-Tosatti}, La Nave-Tian \cite{Nave-Tian}, Song-Yuan \cite{Song-Yuan}, Tosatti-Zhang \cite{Tosatti-Zhang2} and references therein.

Let $(M, g_{0})$ be a compact K\"ahler manifold of complex dimension $n\geq 2$. We write $\omega_{0}=\sqrt{-1}(g_{0})_{i\overline{j}}dz^{i}\wedge d\overline{z^{j}}$ for the K\"ahler form associated to $g_{0}$. We consider the K\"ahler-Ricci flow $\omega=\omega(t)$ given by
\begin{equation}\label{KRF}
\frac{\partial}{\partial t}\omega=-{\rm Ric}(\omega),\qquad \omega(0)=\omega_{0},
\end{equation}
where ${\rm Ric}(\omega)=-\sqrt{-1}\partial\overline{\partial}\log {\rm det}g$, where $g=g(t)$ is the metric associated to $\omega$.

It's well-known that, from Tian-Zhang \cite{Tian-Zhang}, a maximal smooth solution to (\ref{KRF}) exists on $[0, T)$ where $T>0$ is given by
\begin{equation}\label{MaximumTime}
T=\sup\{t>0|[\omega_{0}]-2\pi t c_{1}(X)>0\}.
\end{equation}

Song-Sz\'ekelyhidi-Weinkove \cite{SSW} studied the behavior of the K\"ahler-Ricci flow on the projective bundles. One essential point of their proof is that the projective space admits a metric which has positive bisectional curvature. In this article, we want to generalize their result in the sense that we could have more types of Fano fibers other than projective spaces. For example the fiber could be $\mathbb{P}^{m}$ blown-up at one point or $M_{m,k}$ which is the weighted projective space $Y_{m,k}(1\leq k<m)$ blown-up at the orbifold point (the definitions of $M_{m,k}$ and $Y_{m,k}$ see section 4).  In this paper, we consider the Fano bundle which is trivial on one Zariski open set. More precisely,

\begin{definition}[Fano bundle]\label{FanoBundle}
Let $X, Y$ be compact K\"ahler manifolds with dimension $n,m$, respectively, $F$ be a Fano manifold with dimension $n-m$ and a surjective holomorphic map $\pi: X\rightarrow Y$. We say $X$ is a Fano bundle over $Y$ with fiber $F$,  if for any $y\in Y$,  there exists a Zariski open set $y\in U\subset Y$ and a biholomorphism $\Phi: \pi^{-1}(U)\rightarrow U\times F$ such that the diagram
  $$\xymatrix{
    \pi^{-1}(U) \ar[rr]^{\Phi}\ar[dr]_{\pi} & & U\times F \ar[dl]^{{\rm Pr}_{1}} \\
     & U &
    }$$
commutes, where ${\rm Pr}_{1}$ is the projection map onto the first factor. We denote it as $(X,Y,\pi, F)$.
\end{definition}
\begin{remark}
We recall the definition of Fano fibration,  was defined by Tosatti-Zhang \cite{Tosatti-Zhang2}.  A compact K\"ahler manifold is said to admit a Fano fibration if there is a surjective holomorphic map $\pi:X\rightarrow Y$ with connected fibers, where $Y$ is a compact normal K\"ahler space with $0\leq {\rm dim}Y<{\rm dim}X$ and such that for every fiber $F$ of $\pi$ such that $-K_{X}|_{F}$ is ample. Hence the Fano bundle is a special Fano fibration.
\end{remark}
\begin{remark}
The simplest example of a Fano bundle is $X=F\times Y$ with a Fano manifold $F$ and any compact K\"ahler manifold $Y$. The projective bundle $X$ was considered by Song-Sz\'ekelyhidi-Weinkove \cite{SSW} is a Fano bundle with $F=\mathbb{P}^{n-m}$.
\end{remark}

Since the fiber $F$ is a Fano manifold, the solution $\omega(t)$ develops a singularity after a finite time. By (\ref{MaximumTime}), $T$ is finite since $F \cdot c_{1}(X)^{n-m}>0$ for every fiber $F$. Hence we assume that the limiting K\"ahler class $[\omega_{0}]-2\pi T c_{1}(X)$ satisfies
\begin{equation}\label{InitialData}
[\omega_{0}]-2\pi Tc_{1}(X)=[\pi^{*}\omega_{Y}]
\end{equation}
for some K\"ahler metric $\omega_{Y}$ on $Y$. It's well-known that the K\"ahler-Ricci flow equation (\ref{KRF}) is equivalent to the following complex Monge-Amp\`ere equation
\begin{equation}
\left\{
\begin{aligned}
\frac{\partial \varphi}{\partial t}&=\log\frac{(\frac{1}{T}((T-t)\omega_{0}+t \pi^{*}\omega_{Y})+\sqrt{-1}\partial\overline{\partial}\varphi)^{n}}{\Omega}\\
\varphi(0)&=0,\\
\end{aligned}
\right.
\end{equation}
where $\Omega$ is a smooth volume form, $\chi=\sqrt{-1}\partial\overline{\partial}\log\Omega \in -2\pi c_{1}(X)$, and $\omega(t)=\frac{1}{T}((T-t)\omega_{0}+t \pi^{*}\omega_{Y})+\sqrt{-1}\partial\overline{\partial}\varphi>0$. By Lemma \ref{LowerBound} below, we know there exists a bound function $\varphi_{T}$ on $X$ satisfying $\lim_{t\rightarrow T}\varphi(t)=\varphi_{T}$. We define
\begin{equation}\label{omegaT}
\omega_{T}:=\pi^{*}\omega_{Y}+\sqrt{-1}\partial\overline{\partial}\varphi_{T}\geq 0.
\end{equation}

Our first main result of this paper is that $\omega_{T}$ is bounded.
\begin{theorem}\label{MainThm0}
Assume $(X,Y,\pi, F)$ is a Fano bundle, $\omega_{0}$ is the K\"ahler metric on $X$, $\omega_{Y}$ is a K\"ahler metric on $Y$ satisfying (\ref{InitialData}) for some $T>0$, $\omega_{T}$ is defined by (\ref{omegaT}). Then there exists a uniform constant $C>0$ such that
\begin{equation}
C^{-1}\pi^{*}\omega_{Y}\leq \omega_{T}\leq C\pi^{*}\omega_{Y}.
\end{equation}
\end{theorem}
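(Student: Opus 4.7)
The plan is to establish both inequalities directly along the K\"ahler--Ricci flow itself and then pass to the limit $t\to T$, rather than analyse the singular current $\omega_{T}$ in isolation. Write
\[
\hat\omega(t) \;=\; \tfrac{1}{T}\bigl((T-t)\omega_{0}+t\,\pi^{*}\omega_{Y}\bigr), \qquad \omega(t) \;=\; \hat\omega(t) + \sqrt{-1}\partial\overline{\partial}\varphi(t),
\]
so that $\hat\omega(t)\to\pi^{*}\omega_{Y}$ as $t\to T$. By Lemma \ref{LowerBound} the potential $\varphi(t)$ is uniformly bounded on $X\times[0,T)$, hence $\omega(t)\to\omega_{T}$ as $(1,1)$-currents, and any uniform two-sided comparison of $\omega(t)$ with a smooth positive form on $X$ will descend to $\omega_{T}$ in the limit.

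For the lower bound $\omega_{T}\geq C^{-1}\pi^{*}\omega_{Y}$, I would apply the parabolic Schwarz lemma to $u(t):=\operatorname{tr}_{\omega(t)}\pi^{*}\omega_{Y}$. Since $\omega_{Y}$ is smooth on the compact base $Y$ and $\pi$ is holomorphic, the standard computation yields
\[
\Bigl(\tfrac{\partial}{\partial t}-\Delta_{\omega(t)}\Bigr)\log u \;\leq\; C_{0}\,u
\]
on $\{u>0\}$, where $C_{0}$ depends only on an upper bound for the holomorphic bisectional curvature of $\omega_{Y}$. Coupled with the evolution $(\partial_{t}-\Delta_{\omega(t)})\varphi=\dot\varphi-n+\operatorname{tr}_{\omega(t)}\hat\omega(t)$ and the uniform bounds on $\varphi$ and $\dot\varphi$, the maximum principle applied to $\log u-A\varphi$ for $A$ sufficiently large forces $u\leq C$ uniformly on $[0,T)$. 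This is exactly $\pi^{*}\omega_{Y}\leq C\,\omega(t)$, and letting $t\to T$ yields the lower bound.

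The upper bound $\omega_{T}\leq C\pi^{*}\omega_{Y}$ is the substantive collapsing statement. The plan is to exploit the Zariski-open trivialisations $\pi^{-1}(U)\cong U\times F$ from Definition \ref{FanoBundle} to introduce a reference form
\[
\tilde\omega(t) \;=\; \pi^{*}\omega_{Y} + \tfrac{T-t}{T}\operatorname{Pr}_{2}^{*}\omega_{F}
\]
on $\pi^{-1}(U)$, where $\omega_{F}$ is a fixed K\"ahler metric on the Fano fiber $F$. Running a Schwarz-type maximum principle on $\operatorname{tr}_{\tilde\omega(t)}\omega(t)$ together with an auxiliary barrier of the form $-A\varphi+B(T-t)\operatorname{tr}_{\omega(t)}\operatorname{Pr}_{2}^{*}\omega_{F}$ to absorb the curvature terms of $\omega_{F}$ should deliver $\omega(t)\leq C\,\tilde\omega(t)$ uniformly. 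Since $\tilde\omega(t)\to\pi^{*}\omega_{Y}$ as $t\to T$, the bound on $\omega_{T}$ follows on $\pi^{-1}(U)$, and then on all of $X$ by a finite cover of $Y$ by trivialising Zariski opens.

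The main obstacle is precisely this upper bound. The Schwarz computation for $\operatorname{tr}_{\tilde\omega(t)}\omega(t)$ generates curvature terms of $\omega_{F}$ with the wrong sign, so controlling them demands a fiber reference metric with sufficient bisectional positivity; this is the reason later parts of the paper restrict $F$ to $\mathbb{P}^{m}$ blown up at one point or to $M_{m,k}$, where an explicit good reference metric is available. A secondary difficulty is that the estimates produced are a priori local on the trivialisations, so one must argue that the constants are global on $X$ (which is natural, since $\varphi(t)$ and the flow itself are global objects on $X$) and that the resulting current inequality extends across $Y\setminus U$ by the finite covering argument.
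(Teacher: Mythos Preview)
Your lower bound argument is fine and essentially matches the paper's Lemma \ref{LowerBound}(4). The gap is in your plan for the upper bound.

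You propose to prove the much stronger inequality $\omega(t)\leq C\,\tilde\omega(t)$ with $\tilde\omega(t)=\pi^{*}\omega_{Y}+\tfrac{T-t}{T}\operatorname{Pr}_{2}^{*}\omega_{F}$, i.e.\ to control $\omega(t)$ in \emph{all} directions with the correct collapsing rate along the fibers. As you correctly identify, the Schwarz computation for $\operatorname{tr}_{\tilde\omega(t)}\omega(t)$ produces fiber-curvature terms that cannot be absorbed without a positivity assumption on $\omega_{F}$. But Theorem \ref{MainThm0} is stated for an \emph{arbitrary} Fano fiber $F$, so this route cannot close. The restriction on $F$ in Theorems \ref{MainThm1a} and \ref{MainThm1b} is needed for the diameter and Gromov--Hausdorff statements, not for Theorem \ref{MainThm0}.

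The missing idea is that you do not need to control $\omega(t)$ in the fiber directions at all. Since $\pi^{*}\omega_{Y}$ vanishes on each fiber $\pi^{-1}(y)$, the restriction $\omega_{T}|_{\pi^{-1}(y)}=\sqrt{-1}\partial\overline{\partial}\varphi_{T}|_{\pi^{-1}(y)}\geq 0$ together with the boundedness of $\varphi_{T}$ forces $\varphi_{T}$ to be constant on every fiber; hence $\varphi_{T}=\pi^{*}\psi_{T}$ and $\omega_{T}=\pi^{*}(\omega_{Y}+\sqrt{-1}\partial\overline{\partial}\psi_{T})$. So the upper bound reduces to an estimate in the \emph{horizontal} directions only. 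The paper achieves this via Lemma \ref{Hestimate}: on a horizontal slice $H=\Phi^{-1}(U\times\{f\})$ one runs the Schwarz argument for $\operatorname{tr}_{(\pi^{*}\omega_{Y})|_{H}}(\omega(t)|_{H})$, and because the reference metric is $\pi^{*}\omega_{Y}$ the only curvature that appears is that of $\omega_{Y}$, which is harmless once you already know $\operatorname{tr}_{\omega(t)}\pi^{*}\omega_{Y}\leq C$. This yields $\omega(t)|_{H}\leq C|s|^{-2\alpha}(\pi^{*}\omega_{Y})|_{H}$; restricting to $\{|s|^{2}>1/2\}$ and covering $Y$ by finitely many such trivialising opens gives $\sqrt{-1}\partial\overline{\partial}\psi_{T}\leq C\omega_{Y}$ globally. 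No hypothesis on $F$ beyond compactness is used.
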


We also study the K\"ahler-Ricci flow on the Fano bundles with the fiber $F$ is $\mathbb{P}^{m}$ blown up at one point or $M_{m,k}$ which is the weighted projective space $Y_{m,k}$ (the definition see Section 4) blown up at the orbifold point.

The other main result of this paper shows that diameter of manifold $X$ with metric $\omega(t)$ is finite and there exists a sequence of metrics along the K\"ahler-Ricci flow converge subsequentially to a metric on $Y$ in the Gromov-Hausdorff sense as $t\rightarrow T$. It generalizes in some sense the result of Song-Sz\'ekelyhidi-Weinkove (Theorem 1.1 in \cite{SSW}).

\begin{theorem}\label{MainThm1}
Let $(X, Y, \pi, F)$ be a Fano bundle with $F$ is $\mathbb{P}^{m}$ blown up at one point $(m\geq 2)$ or $F=M_{m,k}(1\leq k<m)$,  $\omega_{Y}$ be a K\"ahler metric on $Y$ and $\omega_{0}$ be a K\"ahler metric on $X$. Assume $\omega(t)$ is a solution of the K\"ahler-Ricci flow (\ref{KRF}) for $t\in [0, T)$ with initial metric $\omega_{0}$ and $[\omega_{0}]-2\pi Tc_{1}(X)=[\pi^{*}\omega_{Y}]$, then we have
\begin{itemize}
\item[(1)] ${\rm diam}(X, \omega(t))\leq C$ for some uniform constant $C>0$;
\item[(2)] There exists a sequence of times $t_{i}\rightarrow T$ and a distance function $d_{Y}$ on $Y$ (which is uniformly equivalent to the distance induced by $\omega_{Y}$, such that $(X,\omega(t_{i}))$ converges to $(Y,d_{Y})$ in the Gromov-Hausdorff sense.
\end{itemize}
\end{theorem}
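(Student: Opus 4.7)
The plan is to adapt the argument of Song-Sz\'ekelyhidi-Weinkove \cite{SSW} for projective bundles to the present Fano bundle setting, using Theorem \ref{MainThm0} as the essential input that the limiting class $[\omega_T]$ is represented by a form uniformly equivalent to $\pi^*\omega_Y$. The first step is to establish a uniform lower bound $\omega(t) \geq c\,\pi^*\omega_Y$ on $X \times [0,T)$. Since $\pi^*\omega_Y$ is smooth and semipositive, this follows from a parabolic Chern-Lu inequality applied to $\operatorname{tr}_{\omega(t)}\pi^*\omega_Y$, where the only obstructive term, coming from the bisectional curvature of $\omega_Y$, can be controlled by $\operatorname{tr}_{\omega(t)}\pi^*\omega_Y$ itself and absorbed via the maximum principle, making essential use of the $C^{0}$ bound on $\varphi(t)$ furnished by Theorem \ref{MainThm0}. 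This bounds horizontal $\omega(t)$-distances below by base distances.

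Next, I would introduce a fiberwise reference metric $\omega_F$ on $F$ with nonnegative orthogonal bisectional curvature, playing the role of the Fubini-Study metric in \cite{SSW}. For $F = \mathrm{Bl}_p\mathbb{P}^m$ and $F = M_{m,k}$ such a metric is supplied by a $U(m)$-invariant Calabi-type ansatz, respecting the orbifold structure in the case of $M_{m,k}$. On each trivialization $\pi^{-1}(U)\cong U\times F$ the pull-back $\tilde\omega_F:=\mathrm{Pr}_2^{*}\omega_F$ is closed and semipositive and restricts to $\omega_F$ fiberwise. Running the parabolic Schwarz inequality for $\operatorname{tr}_{\omega(t)}\tilde\omega_F$ locally over $U$, the nonnegativity of the bisectional curvature of $\omega_F$ kills the only bad term and gives an estimate of the form $\operatorname{tr}_{\omega(t)}\tilde\omega_F \leq C(T-t)^{-1}$, so $\omega(t)|_{F_y}\leq C(T-t)\,\omega_F$, and the diameter of each fiber tends to zero uniformly in $y$. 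Combining this vertical shrinking with the horizontal lower bound, any two points of $X$ are joined by two short vertical segments in their fibers plus a horizontal lift of an $\omega_Y$-geodesic, whose $\omega(t)$-length is controlled; this yields \textbf{(1)}.

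For \textbf{(2)}, the estimates above together with the uniform convergence $\varphi(t)\to\varphi_T$ built into Theorem \ref{MainThm0} show that on compact subsets of $\pi^{-1}(U)$ the metric $\omega(t)$ converges locally uniformly to $\pi^*\omega_T$ in horizontal directions while collapsing vertically. A standard $\varepsilon$-net argument, as in \cite{SSW}, then extracts a subsequence $t_i\to T$ along which $(X,\omega(t_i))$ converges in the Gromov-Hausdorff sense to $(Y,d_Y)$, where $d_Y$ is the distance function induced by $\omega_T$ and uniformly equivalent to that of $\omega_Y$. The main obstacle I anticipate is the construction of $\omega_F$ and the verification of the Schwarz estimate: a generic Fano manifold admits no metric with nonnegative bisectional curvature, and it is precisely the symmetry of $\mathrm{Bl}_p\mathbb{P}^m$ and $M_{m,k}$ that makes the Calabi ansatz succeed. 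For $M_{m,k}$ a delicate analysis near the exceptional divisor is needed to ensure the Schwarz-type estimate survives the orbifold singularity, most likely by working on a smooth resolution and tracking the behavior of $\omega_F$ there carefully.
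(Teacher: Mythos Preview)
Your central step has a genuine gap. You propose to equip $F=\mathrm{Bl}_p\mathbb{P}^m$ (or $M_{m,k}$) with a Calabi-ansatz K\"ahler metric $\omega_F$ of nonnegative (orthogonal) bisectional curvature and then run the parabolic Schwarz lemma for $\operatorname{tr}_{\omega(t)}\tilde\omega_F$. But $\mathrm{Bl}_p\mathbb{P}^m$ admits no K\"ahler metric of nonnegative holomorphic bisectional curvature: by the Mori--Siu--Yau theorem this would force $F\cong\mathbb{P}^m$. Orthogonal bisectional curvature alone is not enough either, since the curvature term in the Schwarz inequality for $\operatorname{tr}_{\tilde\omega}\omega$ (equation (2.25) in the paper) involves the full bisectional curvature tensor $\tilde R_{i\bar j p\bar q}$ contracted against arbitrary directions. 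Moreover, your implication is in the wrong direction: a bound $\operatorname{tr}_{\omega(t)}\tilde\omega_F\le C(T-t)^{-1}$ yields only $\tilde\omega_F\le C(T-t)^{-1}\omega(t)$, a \emph{lower} bound on $\omega(t)$, not the upper bound $\omega(t)|_{F_y}\le C(T-t)\,\omega_F$ you claim. No such pointwise upper bound is available (or obtained) in this setting.

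What the paper actually does is work with the \emph{blow-down} $\pi_1:F\to\mathbb{P}^m$ (resp.\ $\pi_1:M_{m,k}\to Y_{m,k}$), pulling back $\omega_{FS}$ (resp.\ the He--Sun orbifold metric of positive bisectional curvature on $Y_{m,k}$). The pulled-back metric $\pi_1^*\omega_{FS}$ has the needed curvature sign but is \emph{degenerate} along the exceptional divisor $E$, so the Schwarz-type bound (Lemma \ref{2.5}) takes the form $\omega(t)\le C|s|_h^{-2\alpha}|s_1|_{h_1}^{-2}\tilde\omega$, blowing up both near $E$ and near the boundary of the local trivialization. The factor $|s|_h^{2\alpha}$ handles the Zariski-open trivialization (this is the mechanism behind Lemma \ref{Hestimate}); the blow-up at $E$ is controlled by a separate estimate on the radial vector field $\tilde V=\sum z^i\partial_{z^i}$ (Lemma \ref{2.6}), which gives $|\tilde V|_\omega^2\le C|s_1|_{h_1}$. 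These two estimates together bound radial and spherical paths in the fiber near $E$ (Lemma \ref{2.7}), and the fiber-diameter decay $d_\omega(p,q)\le C(T-t)^{1/15}$ (Lemma \ref{EstimateFiber}) comes not from a pointwise metric bound but from integrating the trace over a $\mathbb{P}^1$ in the fiber, whose cohomological area is $O(T-t)$. Your outline misses this entire near-$E$ analysis, which is where all the work lies. Finally, Theorem \ref{MainThm0} is not an input here: the $C^0$ bound on $\varphi$ and the lower bound $\omega(t)\ge c\,\pi^*\omega_Y$ are the elementary Lemma \ref{LowerBound}, and the Gromov--Hausdorff convergence is extracted from the fiber-diameter estimate via an Arzel\`a--Ascoli argument on $d_t$, not from convergence of $\varphi$.
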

The Theorem \ref{MainThm1} is a combination of Theorem \ref{MainThm1a} and Theorem \ref{MainThm1b}.

When the dimension of $X$ is $2$, our method basically can cover most del Pezzo surface. It will be more interesting when the complex structure of the fiber is changing and when the fiber is general Fano variety in higher dimension.

In this paper, the notation 'tr' means that, if $\alpha=\sqrt{-1}\alpha_{i\overline{j}}dz^{i}\wedge d\overline{z^{j}}$ is a real $(1,1)$-form then we write
\begin{equation}
{\rm tr}_{\omega}\alpha=g^{i\overline{j}}\alpha_{i\overline{j}}=\frac{n\alpha\wedge \omega^{n-1}}{\omega^{n}}.
\end{equation}
In this notation, we can write $\Delta f={\rm tr}_{\omega}(\sqrt{-1}\partial\overline{\partial}f).$

In section 2, we recall some well known estimates for the K\"ahler-Ricci  flow, and using Song's argument in \cite{Song2} to establish a estimate of the horizontal level set, as an application, we prove Theorem \ref{MainThm0}. In Section 3, we using the estimate of the horizontal level set established in section 2 and the argument in \cite{SSW} to prove the case of fiber is $\mathbb{P}^{m}$ blown up at one point in Theorem \ref{MainThm1}. In Section 4, we recall the definitions of $M_{m,k}$ and $Y_{m,k}$, using He-Sun's theorem \cite{HS}  that any weighted projective space admits a orbifold K\"ahler metric with positive bisectional curvature,  Song-Weinkove's argument in \cite{SW2} and the argument of the proof in the case of $F$ is $\mathbb{P}^{m}$ blown up at one point, we can prove the case of $F=M_{m,k}$ in the Theorem \ref{MainThm1}.

\section{\textbf{The Main Estimates}}
In this section, we recall some estimates for the K\"ahler-Ricci flow, establish a estimate for $\omega(t)$ on the horizontal level set and prove the Theorem \ref{MainThm0}.

We define reference $(1,1)-$forms $\hat{\omega}_{t}$ on $X$ for $t\in[0, T]$ by
\begin{equation}
\hat{\omega}_{t}=\frac{1}{T}((T-t)\omega_{0}+t \pi^{*}\omega_{Y}).
\end{equation}
Then $\hat{\omega}_{t}$ is a K\"ahler form in the cohomology class $[\omega(t)]$ for $t\in [0, T)$. Let $\Omega$ be the unique smooth volume form on $X$ with $\sqrt{-1}\partial\overline{\partial}\log \Omega=\frac{\partial}{\partial t}\hat{\omega}_{t}=:\chi \in -2\pi c_{1}(X)$ and $\int_{X}\Omega=1.$ We also can write $\hat{\omega}_{t}$ as $\hat{\omega}_{t}=\omega_{0}+t\chi.$

It's well-known that the K\"ahler-Ricci flow equation (\ref{KRF}) is equivalent to the following complex Monge-Amp\`ere equation
\begin{equation}
\left\{
\begin{aligned}
\frac{\partial \varphi}{\partial t}&=\log\frac{(\hat{\omega}_{t}+\sqrt{-1}\partial\overline{\partial}\varphi)^{n}}{\Omega}\\
\varphi(0)&=0\\
\omega(t)&>0.\\
\end{aligned}
\right.
\end{equation}
where $\omega(t)=\hat{\omega}_{t}+\sqrt{-1}\partial\overline{\partial}\varphi$.

In this paper we use $C$ to denote a uniform constant, independent of time but possibly depending on $\omega_{0}, n, T$, which may differ from line to line. Then the following estimates are well known, see the Lemma 2.1 and Lemma 2.2 in \cite{SW1}.
\begin{lemma}\label{LowerBound}
For any K\"ahler manifold $(X,\omega_{0})$ and K\"ahler manifold $(Y,\omega_{Y})$. If there exists a surjective holomorphic map $\pi: X\rightarrow Y$, and the smooth solution $\omega(t)$ of the K\"ahler-Ricci flow (\ref{KRF}) on $X$ satisfying $\lim_{t\rightarrow T}[\omega(t)]=[\pi^{*}\omega_{Y} ](T<+\infty).$ Then we have
\quad
\begin{enumerate}
\item
There exists a uniform constant $C>0$ such that $||\varphi||_{L^{\infty}}\leq C$;
\item
There exists a uniform constant $C>0$ such that $\dot{\varphi}\leq C$;
\item
As $t\rightarrow T$, $\varphi(t)$ converges pointwise on $X$ to a bounded function $\varphi_{T}$ satisfying
\begin{equation}
\omega_{T}:=\pi^{*}\omega_{Y}+\sqrt{-1}\partial\overline{\partial}\varphi_{T}\geq 0.
\end{equation}
\item
There exists a uniform constant $c>0$ such that
\begin{equation}
\omega(t)\geq c\pi^{*}\omega_{Y}.
\end{equation}
\end{enumerate}
\end{lemma}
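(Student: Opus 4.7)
The plan is to establish parts (1)--(4) in sequence by applying the parabolic maximum principle to a handful of carefully chosen auxiliary functions, together with a direct estimate on the degeneracy of $\hat{\omega}_{t}^{n}$ as $t\to T$. The algebraic identities that drive the computations are
\[
\chi=\tfrac{1}{T}(\pi^{*}\omega_{Y}-\omega_{0}),\qquad t\chi-\hat{\omega}_{t}=-\omega_{0},\qquad \Delta\varphi=n-\operatorname{tr}_{\omega}\hat{\omega}_{t},
\]
the last coming from $\omega=\hat{\omega}_{t}+\sqrt{-1}\partial\overline{\partial}\varphi$.

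For (1), I would bound $\varphi$ above via the maximum principle on $\varphi$ itself: at a spatial maximum, $\omega\le\hat{\omega}_{t}\le C\omega_{0}$, so $\dot{\varphi}=\log(\omega^{n}/\Omega)\le C$ there, and integrating $\frac{d}{dt}\max_{X}\varphi\le C$ yields $\max_{X}\varphi(t)\le CT$. For the lower bound, at a spatial minimum $\omega\ge\hat{\omega}_{t}$, giving $\dot{\varphi}\ge\log(\hat{\omega}_{t}^{n}/\Omega)$; expanding $\hat{\omega}_{t}^{n}$ in the binomial and using $(\pi^{*}\omega_{Y})^{m+1}=0$ (with $m=\dim Y$) produces $\hat{\omega}_{t}^{n}\ge c(T-t)^{n-m}\Omega$, whence $\dot\varphi\ge (n-m)\log(T-t)-C$. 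The integrability of $\log(T-s)$ on $[0,T]$ then gives a uniform lower bound on $\min_{X}\varphi$.

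For (2), I run the maximum principle on $Q:=t\dot{\varphi}-\varphi-nt$. A direct calculation using $\ddot{\varphi}=\Delta\dot{\varphi}+\operatorname{tr}_{\omega}\chi$ and $t\chi-\hat{\omega}_{t}=-\omega_{0}$ gives
\[
(\partial_{t}-\Delta)Q=\operatorname{tr}_{\omega}(t\chi-\hat{\omega}_{t})=-\operatorname{tr}_{\omega}\omega_{0}\le 0,
\]
so $Q\le Q(0)=0$, rearranging to $\dot{\varphi}\le(\varphi+nt)/t$. Combined with (1) and smoothness of the flow on an initial interval $[0,\delta]$, this yields $\dot{\varphi}\le C$ uniformly. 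Part (3) is then immediate from monotonicity: by (2), $\Psi(t):=\varphi(t)+C(T-t)$ satisfies $\dot{\Psi}\le 0$, so $\Psi$ decreases in $t$; being uniformly bounded by (1), $\Psi$ converges pointwise on $X$, and hence so does $\varphi(t)\to\varphi_{T}$, with $\omega_{T}\ge 0$ as a positive $(1,1)$-current by taking the distributional limit in $\omega(t)>0$.

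Part (4) is the parabolic Schwarz lemma: the bounded bisectional curvature of $\omega_{Y}$ gives $(\partial_{t}-\Delta)\log\operatorname{tr}_{\omega}\pi^{*}\omega_{Y}\le C_{0}\operatorname{tr}_{\omega}\pi^{*}\omega_{Y}$, and the maximum principle applied to $Q:=\log\operatorname{tr}_{\omega}\pi^{*}\omega_{Y}-A\varphi$ with $A$ large absorbs the curvature term via $-A\operatorname{tr}_{\omega}\hat{\omega}_{t}\le -(At/T)\operatorname{tr}_{\omega}\pi^{*}\omega_{Y}$; the potentially bad contribution $-A\dot\varphi$ that appears is controlled using the arithmetic-geometric mean inequality $\operatorname{tr}_{\omega}\omega_{0}\ge c\,e^{-\dot\varphi/n}$, which forces a lower bound on $\dot\varphi$ at the maximum of $Q$, giving $\operatorname{tr}_{\omega}\pi^{*}\omega_{Y}\le C$, equivalently $\omega(t)\ge c\pi^{*}\omega_{Y}$. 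The main technical obstacle throughout is that $\hat{\omega}_{t}$ degenerates in fiber directions as $t\to T$, so one cannot treat $\pi^{*}\omega_{Y}$ as a genuine positive reference; this is what forces the $\log(T-t)$-integrability detour in (1) and the delicate AM-GM absorption in (4), both of which work because $\hat{\omega}_{t}\ge\tfrac{T-t}{T}\omega_{0}$ remains strictly positive for $t<T$ and because classical smoothness of the flow handles any short initial interval $[0,\delta]$ directly.
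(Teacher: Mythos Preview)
The paper does not prove this lemma itself; it cites Lemmas~2.1 and~2.2 of Song--Weinkove \cite{SW1} as ``well known.'' Your arguments for (1)--(3) are correct and standard. One small point: in (1) the sharp bound $\hat\omega_t^n\ge c(T-t)^{n-m}\Omega$ needs $(\pi^*\omega_Y)^m\wedge\omega_0^{n-m}>0$ everywhere, i.e.\ $\pi$ a submersion, which the lemma as stated does not assume; the cruder $\hat\omega_t\ge\tfrac{T-t}{T}\omega_0$ gives exponent $n$ and still integrates, so the conclusion stands.

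Part (4), however, has a genuine gap. With $Q=\log u-A\varphi$, $u:=\operatorname{tr}_\omega\pi^*\omega_Y$, one gets
\[
(\partial_t-\Delta)Q\le C_0u-A\dot\varphi+An-A\operatorname{tr}_\omega\hat\omega_t .
\]
The term $\operatorname{tr}_\omega\omega_0$ you need for the AM--GM trick enters only through $\hat\omega_t\ge\tfrac{T-t}{T}\omega_0$, so at a maximum of $Q$ your absorption gives merely
\[
\tfrac{T-t}{T}\,c\,e^{-\dot\varphi/n}\le n-\dot\varphi+C,
\]
forcing only $\dot\varphi\ge n\log(T-t)-C$. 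Feeding this back yields $u\le C-An\log(T-t)$ at the maximum, hence $Q\le \log\log\tfrac{1}{T-t}+C$, which is \emph{not} uniform as $t\to T$.

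The standard remedy is to change the barrier from $\varphi$ to $(T-t)\dot\varphi+\varphi$. Using your own identities, $(T-t)\chi+\hat\omega_t=\pi^*\omega_Y$, so
\[
(\partial_t-\Delta)\bigl((T-t)\dot\varphi+\varphi\bigr)=\operatorname{tr}_\omega\bigl((T-t)\chi+\hat\omega_t\bigr)-n=u-n .
\]
Taking $Q=\log u-A\bigl((T-t)\dot\varphi+\varphi\bigr)$ with $A=C_0+1$ gives $(\partial_t-\Delta)Q\le -u+An\le An$, so $Q$ is uniformly bounded on $[0,T)$. Since $(T-t)\dot\varphi\le C$ by (2) and $\varphi\le C$ by (1), this yields $\log u\le C$ and hence $\omega(t)\ge c\,\pi^*\omega_Y$. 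The point is that the $\dot\varphi$ contributions cancel \emph{exactly} in this combination, so no lower bound on $\dot\varphi$ is ever required.
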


Next motivated by the argument of Song,  see subsection 3.1 in \cite{Song2}, which Song estimated the evolving metrics of the K\"ahler-Ricci flow in a well-chosen set of directions in the tangent space of each point on $X$ instead of all directions,   we estimate the metric $\omega(t)$ on the horizontal level set of the Fano bundle $X$.

Let $(X, Y, \pi, F)$ be the Fano bundle (see Definition \ref{FanoBundle}). Since for any $x\in X$, let $y=\pi(x)$, there exists a Zariski open set $(y\in) U\subset Y$, such that
the diagram
$$\xymatrix{
    \pi^{-1}(U) \ar[rr]^{\Phi}\ar[dr]_{\pi} & & U\times F \ar[dl]^{{\rm Pr}_{1}} \\
     & U &
    }$$
commutes, where ${\rm Pr}_{1}$ is the projection map onto the first factor.  Let $f={\rm Pr}_{2}\circ \Phi (x)$, $H=\Phi^{-1}(U\times\{f\})$, where ${\rm Pr}_{2}$ is the projection map onto the second factor.  Let  $D=Y\backslash U$ and $s$ be a holomorphic section on $[D]$ and let $h$ be a Hermitian metric on $[D]$. Define $|s|^{2}=hs\overline{s}$. Then on the horizontal level set $H$, we have the estimate for $\omega(t)$

\begin{lemma}\label{Hestimate}
Assume $\omega(t)$ is the solution of the K\"ahler-Ricci flow (\ref{KRF}) and $\lim_{t\rightarrow T}[\omega(t)]=[\pi^{*}\omega_{Y}]$. Fix any point $x\in X$,  then there exists $U\subset Y$,  let $f(x)={\rm Pr}_{2}\circ \Phi(x)$ and $H=\Phi^{-1}(U\times \{f(x)\})$. Then there exist uniform constants $C>0$ and $\alpha>0$, such that
$\omega(t)|_{H}\leq \frac{C}{\pi^{*}(|s|^{2\alpha})}(\pi^{*}\omega_{Y})|_{H}.$
\end{lemma}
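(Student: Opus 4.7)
The plan is to adapt Song's horizontal-trace maximum principle argument from Section 3.1 of \cite{Song2}. Work on $\pi^{-1}(U)$, where the trivialization $\Phi$ defines a holomorphic horizontal distribution $\mathcal{H} = \Phi^*(T^{1,0}U \oplus 0) \subset T^{1,0}\pi^{-1}(U)$ whose integral leaves are the horizontal slices $\Phi^{-1}(U \times \{f'\})$; our $H$ is one of them. Choosing holomorphic coordinates $z^1, \dots, z^m$ on $U$ and using their $\pi$-pullbacks as a horizontal frame, define the horizontal trace
\begin{equation}
u = g_Y^{i\bar j}(\pi(\cdot))\, g(t)_{i\bar j},
\end{equation}
a smooth positive function on $\pi^{-1}(U) \times [0, T)$. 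Because $H$ is a horizontal leaf and $\pi|_H$ is a biholomorphism onto $U$, $u|_H = {\rm tr}_{(\pi^*\omega_Y)|_H}(\omega(t)|_H)$; since $\omega(t)|_H \geq 0$, a scalar upper bound on $u|_H$ implies the claimed matrix inequality. It therefore suffices to show $|s|^{2\alpha} u \leq C$ on $\pi^{-1}(U) \times [0, T)$ for a suitable $\alpha > 0$.

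I would apply the parabolic maximum principle to
\begin{equation}
Q = \log u + \alpha \log |s|^2_h - A\varphi
\end{equation}
on $\pi^{-1}(U) \times [0, T)$, with $\alpha, A > 0$ to be chosen. Since $|s|^2_h$ vanishes on $\pi^{-1}(D)$ while $\varphi$ is uniformly bounded by Lemma \ref{LowerBound}(1), for $\alpha$ large enough (the trivialization $\mathcal{H}$ degenerates at worst polynomially in $|s|$) $Q \to -\infty$ near $\pi^{-1}(D)$, so any spacetime supremum of $Q$ is attained either at $t=0$ (where smoothness of $\omega_0$ gives the initial bound) or at some interior point $(x_0, t_0) \in \pi^{-1}(U) \times (0, T)$. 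At such a point $(\partial_t - \Delta_\omega)Q \geq 0$. A Chern-Lu-type computation restricted to the horizontal distribution yields $(\partial_t - \Delta_\omega)\log u \leq C_1\, {\rm tr}_\omega(\pi^*\omega_Y)$; the barrier contributes $\alpha\, {\rm tr}_\omega(\pi^*\chi_h)$ with $\chi_h$ the bounded Chern curvature of $h$; and $-A\varphi$ contributes $-A\dot\varphi + An - A\, {\rm tr}_\omega \hat\omega_t \leq C - cA\, {\rm tr}_\omega(\pi^*\omega_Y)$, using Lemma \ref{LowerBound}(2) together with $\hat\omega_t \geq c\pi^*\omega_Y$ (which follows from $\pi^*\omega_Y \leq C'\omega_0$ by compactness). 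Choosing $A$ large enough to absorb $C_1$ and $\alpha|\chi_h|$, one deduces $u(x_0, t_0) \leq C_2$, hence $Q \leq C_3$ uniformly on $\pi^{-1}(U) \times [0, T)$, which rearranges to the claim.

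The principal obstacle is the horizontal Chern-Lu bound for $u$. Because $u$ traces $\omega(t)$ against the fixed holomorphic horizontal distribution $\mathcal{H}$ rather than against a Kähler metric on $X$, the calculation of $\Delta_\omega u$ generates both the bisectional-curvature terms of $\omega_Y$ and cross terms coming from derivatives of the trivialization $\Phi$ (equivalently, the second fundamental form of $\mathcal{H}$ in $TX$). Absorbing these cross terms into $C_1\,{\rm tr}_\omega(\pi^*\omega_Y)$ uniformly in $x$ requires $C_1$ to be independent of the chosen trivializing chart; this follows from compactness of $Y$ together with the existence of a finite cover by trivializing opens with uniformly smooth transition data, but the bookkeeping of horizontal and vertical indices in the Chern-Lu inequality is the main technical step.
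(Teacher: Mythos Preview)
Your overall strategy---a horizontal Chern--Lu/parabolic Schwarz computation weighted by $|s|^{2\alpha}$ followed by the maximum principle---is exactly what the paper does. Two points, however, deserve correction.

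First, the $-A\varphi$ barrier is misapplied. You write that $-A\dot\varphi + An - A\,{\rm tr}_\omega\hat\omega_t \leq C - cA\,{\rm tr}_\omega(\pi^*\omega_Y)$ ``using Lemma~\ref{LowerBound}(2)'', but Lemma~\ref{LowerBound}(2) bounds $\dot\varphi$ from \emph{above}, so $-A\dot\varphi$ has no uniform upper bound and the displayed inequality fails. More importantly, even if the differential inequality were salvaged, at an interior maximum it would only yield a bound on ${\rm tr}_\omega\pi^*\omega_Y$, not on the horizontal trace $u$; there is no AM--GM/volume link between the two here because $u$ traces $\omega$ only over the $m$ horizontal directions. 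The paper avoids this entirely: it invokes Lemma~\ref{LowerBound}(4), namely $\omega(t)\geq c\,\pi^*\omega_Y$, to bound ${\rm tr}_\omega\pi^*\omega_Y\leq C'$ directly. Then $(\partial_t-\Delta)\log(|s|^{2\alpha}u)\leq C'$ on $\pi^{-1}(U)$, the quantity tends to $-\infty$ along $\pi^{-1}(D)$, and the ordinary parabolic maximum principle gives $|s|^{2\alpha}u\leq C$. No $-A\varphi$ term is needed.

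Second, the ``principal obstacle'' you flag---cross terms from the second fundamental form of $\mathcal H$---does not in fact arise. Because the horizontal frame $\partial/\partial x^\alpha = a_\alpha^i\,\partial/\partial z^i$ is \emph{holomorphic} (it comes from the biholomorphism $\Phi$), the Bochner computation produces only the pulled-back bisectional curvature of $\omega_Y$ and the nonnegative term $|\partial_k a_\alpha^i|^2$; the latter has the favorable sign and is absorbed by the standard Cauchy--Schwarz inequality $|\nabla u_1|^2/u_1 \leq |\partial_k a_\alpha^i|^2$ (as in \cite{ST1}). So the only curvature contribution is $c_Y\,{\rm tr}_\omega\pi^*\omega_Y$, with $c_Y$ depending only on $\omega_Y$---no chart-dependent bookkeeping is required.
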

\begin{proof}
Since for any $x\in\pi^{-1}(U)$, $\pi(x)=y\in U$,  and $\Phi$ is a biholomorphism from $\pi^{-1}(U)$ to $U\times F$, there exist constants $\alpha>0$ and $C>0$ such that $\pi^{*}\omega_{Y}|_{H}\geq \pi^{*}|s|^{\alpha}\omega_{Y}|_{U}$. On the other hand, for each time $t\in (0, T)$, $\omega(t)$ is equivalent to metric $\omega_{0}$. Hence if we let
\begin{center}
$u(t,x)=\pi^{*}(|s|^{2\alpha}){\rm tr}_{\pi^{*}\omega_{Y}|_{H}}(\omega(t)|_{H})(x),$
\end{center}
we know $u\rightarrow 0$ along $X\backslash \pi^{-1}(U)$ and hence a positive maximum must occur in $\pi^{-1}(U)$ at each fixed time $t\in (0, T)$. We assume the maximum can be obtained at point $x_{0}\in X$. Let $y_{0}=\pi(x_{0})\in Y$. We choose normal coordinate system $(z^{i})_{i=1,\cdots, n}$ for $g(t)$ at $x_{0}$ and $(w^{\alpha})_{\alpha=1,\cdots,m}$ for $g_{Y}$ at $y_{0}$. For any holomorphic vector $\frac{\partial}{\partial w^{\alpha}}$, there exist holomorphic vector $\frac{\partial}{\partial x^{\alpha}}\in T_{x}X$ such that $d\pi_{x}(\frac{\partial}{\partial x^{\alpha}})=\frac{\partial}{\partial w^{\alpha}}$ for any $x$ in the local normal coordinate chart of $x_{0}$. The map $\pi$ is given locally as $(\pi^{1},\cdots,\pi^{m})$ for holomorphic functions $\pi^{\alpha}=\pi^{\alpha}(z^{1},\cdots,z^{n})$. We write $\frac{\partial}{\partial x^{\alpha}}$ as $\frac{\partial}{\partial x^{\alpha}}=a_{\alpha}^{i}\frac{\partial}{\partial z^{i}}$ for holomorphic functions $a_{\alpha}^{i}$.  Then $u$ can be expressed as $u(t,x)=|s|^{2\alpha}(\pi(x))g_{Y}^{\alpha\overline{\beta}}a_{\alpha}^{i}\overline{a_{\beta}^{j}}g_{i\overline{j}}.$  For convenience, we denote $u_{1}=g_{Y}^{\alpha\overline{\beta}}a_{\alpha}^{i}\overline{a_{\beta}^{j}}g_{i\overline{j}}$.  Then at point $x_{0}$
\begin{eqnarray*}
\begin{aligned}
\Delta u_{1}&=g^{k\overline{l}}\partial_{k}\partial_{\overline{l}}(g_{Y}^{\alpha\overline{\beta}}a_{\alpha}^{i}\overline{a_{\beta}^{j}}g_{i\overline{j}})\\
&=\sum_{k,l=1}^{n}g^{k\overline{l}}\partial_{k}(\partial_{\overline{\delta}}g_{Y}^{\alpha\overline{\beta}}\overline{\pi^{\delta}_{l}}a_{\alpha}^{i}\overline{a_{\beta}^{j}}g_{i\overline{j}}
+g_{Y}^{\alpha\overline{\beta}}a_{\alpha}^{i}\overline{\partial_{l}a_{\beta}^{j}}g_{i\overline{j}}+g_{Y}^{\alpha\overline{\beta}}a_{\alpha}^{i}\overline{a_{\beta}^{j}}\partial_{\overline{l}}g_{i\overline{j}})\\
&=-\partial_{\gamma}\partial_{\overline{\delta}}(g_{Y})_{\beta\overline{\alpha}}\pi^{\gamma}_{k}\overline{\pi^{\delta}_{k}}a_{\alpha}^{i}\overline{a_{\beta}^{i}}
+|\partial_{k}a_{\alpha}^{i}|^{2}-a_{\alpha}^{i}\overline{a_{\alpha}^{j}}R_{i\overline{j}}\\
&=({\rm Rm}(g_{Y}))_{\gamma\overline{\delta}\beta\overline{\alpha}}\pi^{\gamma}_{k}\overline{\pi^{\delta}_{k}}a_{\alpha}^{i}\overline{a_{\beta}^{i}}
+|\partial_{k}a_{\alpha}^{i}|^{2}-a_{\alpha}^{i}\overline{a_{\alpha}^{j}}R_{i\overline{j}}
\end{aligned}
\end{eqnarray*}
On the other hand,
\begin{equation*}
\frac{\partial u_{1}}{\partial t}=g_{Y}^{\alpha\overline{\beta}}a_{\alpha}^{i}\overline{a_{\beta}^{j}}\frac{\partial}{\partial t}g_{i\overline{j}}=-a_{\alpha}^{i}\overline{a_{\alpha}^{j}}R_{i\overline{j}}.
\end{equation*}
Hence
\begin{eqnarray*}
\begin{aligned}
(\frac{\partial}{\partial t}-\Delta)\log u_{1}&=\frac{1}{u_{1}}(-({\rm Rm}(g_{Y}))_{\gamma\overline{\delta}\beta\overline{\alpha}}\pi^{\gamma}_{k}\overline{\pi^{\delta}_{k}}a_{\alpha}^{i}\overline{a_{\beta}^{i}}
-|\partial_{k}a_{\alpha}^{i}|^{2})+\frac{|\nabla u_{1}|^{2}}{u_{1}^{2}}\\
&\leq c_{Y}{\rm tr}_{\omega}\pi^{*}\omega_{Y}+\frac{1}{u_{1}}(\frac{|\nabla u_{1}|^{2}}{u_{1}}-|\partial_{k}a_{\alpha}^{i}|^{2}),
\end{aligned}
\end{eqnarray*}
where $-c_{Y}$ is a lower bound for the bisectional curvature of $\omega_{Y}$ on $Y$. It is easy to get (see \cite{ST1})
\begin{center}
$\frac{|\nabla u_{1}|^{2}}{u_{1}}-|\partial_{k}a_{\alpha}^{i}|^{2}\leq 0.$
\end{center}
Hence we have
\begin{equation}
(\frac{\partial}{\partial t}-\Delta)\log u_{1}\leq c_{Y}{\rm tr}_{\omega}\pi^{*}\omega_{Y}.
\end{equation}
Since $\sqrt{-1}\partial\overline{\partial}(\pi^{*}|s|^{2})(x_{0})=\sqrt{-1}\partial\overline{\partial}|s|^{2}(y_{0})$,  is bounded by some multiple of $\pi^{*}\omega_{Y}$.
Combine Lemma 2.1, we have
\begin{equation*}
(\frac{\partial}{\partial t}-\Delta)\log u\leq C{\rm tr}_{\omega}\pi^{*}\omega_{Y}\leq C'.
\end{equation*}
Hence by the maximum principle, we have $u\leq C$.
\end{proof}

Now we prove the Theorem \ref{MainThm0}.
\begin{proof}[Proof of Theorem \ref{MainThm0}]
Lower bound follows from (4) in Lemma \ref{LowerBound}. For any point $y\in Y$,  each fiber $\pi^{-1}(y)=F$ is a closed K\"ahler manifold, and since $\pi^{*}\omega_{Y}|_{\pi^{-1}(y)}=0$, we have
$$ \sqrt{-1}\partial\overline{\partial}\varphi_{T}|_{\pi^{-1}(y)}=\omega_{T}|_{\varphi_{T}}\geq 0,$$
since $\varphi_{T}$ is bounded, $\varphi_{T}$ must be constant on the fiber $\pi^{-1}(y)$. Hence there exists a bounded function $\psi_{T}$ on $Y$ satisfying
$$\varphi_{T}=\pi^{*}\psi_{T}.$$
Hence
$$\omega_{T}=\pi^{*}(\omega_{Y}+\sqrt{-1}\partial\overline{\partial}\psi_{T}).$$
Now for any $x\in X$, we may assume that $|s|^{2}(\pi(x))=0$, there exists an open set $\pi(x)\in U\subset Y$, such that Lemma \ref{Hestimate} holds. Now we consider the open set $U_{1/2}:=\{ y\in U| |s|^{2}(y)>1/2\}$. Then by Lemma \ref{Hestimate}, there exists a constant $C>0$ such that
$$\sqrt{-1}\partial\overline{\partial}\psi_{T}|_{U_{1/2}}\leq C\omega_{Y}.$$
Since $Y$ is a compact manifold, there exist a finite open set $\{U^{i}_{1/2}(1\leq i\leq N)\}$ ($N$ is a positive integer number) satisfying
$$ \cup_{i=1}^{N}U^{i}_{1/2}=Y.$$
Hence we obtain that there exists a uniform constant $C>0$ such that
$$ \sqrt{-1}\partial\overline{\partial}\psi_{T}\leq C\omega_{Y}.$$
Hence we finish the proof of the theorem.
\end{proof}

\section{\textbf{$F$ Is $\mathbb{P}^{m}$ Blown Up At One Point}}
In this section, we consider the case of $F$ is $\mathbb{P}^{m}$ blown up at one point. One essential point of Song-Sz\'ekelyhidi-Weinkove's proof \cite{SSW} is that the projective space admits a metric which has positive holomorphic bisectional  curvature. Although $\mathbb{P}^{m}$ blown up at one point doesn't admit a metric with nonnegative holomorphic bisectional curvature, but we have such metric with nonnegative bisectional curvature on outside of the divisor. Then we need to estimate the locally holomorphic vector field near the divisor under the evolving metrics,  by using a idea of Song-Weinkove \cite{SW1}.  We also need Lemma 2.2,  estimate of the evolving metrics along the K\"ahler-Ricci flow which were restricted to a horizontal set. We prove the following
\begin{theorem}\label{MainThm1a}
Let $(X, Y, \pi, F)$ be a Fano bundle with $F$ is $\mathbb{P}^{m}$ blown up at one point $(m\geq 2)$,  $\omega_{Y}$ be a K\"ahler metric on $Y$ and $\omega_{0}$ be a K\"ahler metric on $X$. Assume $\omega(t)$ is a solution of the K\"ahler-Ricci flow (\ref{KRF}) for $t\in [0, T)$ with initial metric $\omega_{0}$ and $[\omega_{0}]-2\pi Tc_{1}(X)=[\pi^{*}\omega_{Y}]$, then we have
\begin{itemize}
\item[(1)] ${\rm diam}(X, \omega(t))\leq C$ for some uniform constant $C>0$;
\item[(2)] There exists a sequence of times $t_{i}\rightarrow T$ and a distance function $d_{Y}$ on $Y$ (which is uniformly equivalent to the distance induced by $\omega_{Y}$, such that $(X,\omega(t_{i}))$ converges to $(Y,d_{Y})$ in the Gromov-Hausdorff sense.
\end{itemize}
\end{theorem}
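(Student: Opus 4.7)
The plan is to adapt the Song-Sz\'ekelyhidi-Weinkove strategy for projective bundles by incorporating the Song-Weinkove trick of inserting a defining section of the exceptional divisor into the test function. First I would construct a semi-positive reference $(1,1)$-form $\eta$ on $X$ which, restricted to each fiber $F_{y}$, equals the pullback of the Fubini-Study metric under the blow-down $\mathrm{Bl}_{p}\mathbb{P}^{m}\to\mathbb{P}^{m}$. Over the Zariski trivialization $\pi^{-1}(U)\simeq U\times F$ this form is canonical via ${\rm Pr}_{2}$ composed with the blow-down; globally I would either patch and use Lemma \ref{Hestimate} to control the regions where patching fails, or work throughout on the generic open set where Lemma \ref{Hestimate} applies and extend bounds by compactness. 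The crucial feature of $\eta$ is that its holomorphic bisectional curvature in fiber directions is nonnegative, and is strictly positive away from the exceptional divisor $E\subset X$ (the union of fiberwise exceptional loci).

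Next I would estimate ${\rm tr}_{\eta}\omega(t)$ by the parabolic maximum principle. Let $s_{E}$ be a holomorphic section of $[E]$ with a fixed hermitian metric. For small $\varepsilon>0$ consider the quantity
\begin{equation*}
Q_{\varepsilon}=\log{\rm tr}_{\eta}\omega(t)+\varepsilon\log|s_{E}|^{2}-A\varphi,
\end{equation*}
which tends to $-\infty$ along $E$, so its maximum is attained in $X\setminus E$. At such a maximum, a Schwarz-lemma-type computation yields $(\partial_{t}-\Delta)\log{\rm tr}_{\eta}\omega\leq C\,{\rm tr}_{\omega}\pi^{*}\omega_{Y}+C'$, where the positive bisectional curvature of $\eta|_{F_{y}}$ absorbs the fiber curvature terms and Lemma \ref{Hestimate} together with Theorem \ref{MainThm0} controls the horizontal contribution ${\rm tr}_{\omega}\pi^{*}\omega_{Y}$. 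Choosing $A$ sufficiently large so that $-A(\partial_{t}-\Delta)\varphi$ dominates, passing $\varepsilon\to 0$, one obtains
\begin{equation*}
\omega(t)|_{F_{y}}\leq C\,\omega_{F,\mathrm{ref}}|_{F_{y}}
\end{equation*}
for a fixed K\"ahler reference metric on each fiber.

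Combining the fiberwise upper bound with Lemma \ref{Hestimate} for horizontal directions and the lower bound $\omega(t)\geq c\pi^{*}\omega_{Y}$ from Lemma \ref{LowerBound}(4) gives the diameter estimate (1): any two points can be joined by a horizontal path of length $\leq C\,\mathrm{diam}(Y,\omega_{Y})$ plus two vertical paths in fibers of diameter $\leq C$. For (2), Gromov's precompactness yields a subsequential Gromov-Hausdorff limit of $(X,\omega(t_{i}))$. I would then define $d_{Y}$ as the length metric on $Y$ built from $\omega_{T}$ (which by Theorem \ref{MainThm0} is uniformly equivalent to $\omega_{Y}$) and verify that $\pi:(X,\omega(t_{i}))\to(Y,d_{Y})$ is an $\varepsilon_{i}$-Gromov-Hausdorff approximation by showing that fiber diameters tend to zero. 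Fiber collapsing follows from the fiberwise upper bound together with the volume identity $\int_{F_{y}}\omega(t)^{n-m}\to\int_{F_{y}}[\pi^{*}\omega_{Y}]^{n-m}=0$.

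The main obstacle will be the Schwarz-lemma argument near $E$. Since $\eta$ degenerates there, the good negative term from the positive bisectional curvature weakens, while the error terms involving derivatives of the local trivializations may blow up along $E$. The delicate balancing of the $\varepsilon\log|s_{E}|^{2}$ weight against the bad terms, uniform in $\varepsilon$, and the compatibility of this fiberwise estimate with the horizontal estimate of Lemma \ref{Hestimate} (which itself deteriorates along $\pi^{-1}(D)$), are the technical crux of the argument.
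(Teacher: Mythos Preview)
Your Schwarz-lemma step has a concrete gap. Since $\eta=\pi_{1}^{*}\omega_{FS}+\pi^{*}\omega_{Y}$ is genuinely degenerate along $E$ (its inverse blows up like $|s_{E}|^{-2}$ in the directions tangent to $E$ inside the fiber), the quantity ${\rm tr}_{\eta}\omega(t)$ itself blows up like $|s_{E}|^{-2}$ near $E$ for every fixed $t$. Hence $Q_{\varepsilon}=\log{\rm tr}_{\eta}\omega+\varepsilon\log|s_{E}|^{2}-A\varphi$ behaves like $(\varepsilon-1)\log|s_{E}|^{2}\to+\infty$ along $E$, so the maximum principle does not localize away from $E$. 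The correct barrier is $(1+\varepsilon)\log|s_{E}|^{2}$, and after letting $\varepsilon\to 0$ you are left with an unavoidable factor: the conclusion is $\omega(t)\leq C|s|_{h}^{-2\alpha}|s_{E}|^{-2}\,\tilde\omega$, not a uniform bound by a fixed K\"ahler metric on the fiber. This is exactly what the paper obtains in its Lemma~\ref{2.5}.

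With only that weaker estimate, the fiber diameter bound is no longer automatic: the bound $\omega|_{F_{y}}\leq Cr^{-2}\pi_{1}^{*}\omega_{FS}$ does not integrate along radial paths to a finite length. The paper fills this gap with a second, independent estimate (Lemma~\ref{2.6}) on the radial vector field $V=\sum z^{i}\partial_{z^{i}}$, obtaining $|V|_{\omega}^{2}\leq C|s_{E}|$, i.e.\ $|W|_{g}^{2}\leq Cr^{-1}$; this is what makes radial lengths $\leq Cr^{1/2}$ and gives the diameter bound for the fiber (Lemma~\ref{2.7}). Your outline does not contain this ingredient.

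For part (2), two further points fail. First, ``volume $\to 0$ plus uniform metric upper bound'' does not imply fiber diameter $\to 0$ (thin long cylinders); the paper instead uses the cohomological decay $\int_{\gamma}\omega(t)=\frac{T-t}{T}\int_{\gamma}\omega_{0}$ along rational curves $\gamma\cong\mathbb{P}^{1}$ in the fiber, combined with a Fubini slicing argument and the estimates above, to get $d_{\omega(t)}(p,q)\leq C(T-t)^{1/15}$ for $p,q$ in the same fiber. Second, Gromov precompactness requires a uniform Ricci lower bound, which is not available here; the paper instead shows directly that the distance functions $d_{t}$ are equicontinuous (via the fiber diameter decay) and extracts a uniformly convergent subsequence by an Arzel\`a--Ascoli argument.
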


\subsection{Key Estimates}
Write $\pi_{1}: F\rightarrow \mathbb{P}^{m}$ for the blow-down map, which is an isomorphism from $F \backslash E$ to $\mathbb{P}^{m}\backslash \{p\}$, where $p\in \mathbb{P}^{m}$ and $E=\pi_{1}^{-1}(p)$, which is biholomorphic to $\mathbb{P}^{m-1}$. For convenient, once and for all, a coordinate chart $V$ centered at $p$, which we identify via coordinates $z^{1}, \cdots, z^{m}$ with the unit ball $D_{1}$ in $\mathbb{C}^{m}$,
\begin{equation}
D_{1}=\{(z^{1},\cdots, z^{m})\in \mathbb{C}^{m}|\sum_{i=1}^{m}|z^{i}|^{2}<1\}.
\end{equation}
Denote by $g_{e}$ the Euclidean metric on $D_{1}$. Since $g_{e}$ and $g_{FS}$ are uniformly equivalent on $D_{1}$, it suffices to estimates for $g_{e}$ on $D_{1}$. Write $D_{r}\subset D_{1}$ for the ball of radius $0<r<1$ with respect to $g_{e}.$

We recall the definition of the blow-up construction, following the exposition in \cite{GH}. We identify $\pi_{1}^{-1}(D_{1})$ with the submanifold $\tilde{D_{1}}$ of $D_{1}\times \mathbb{P}^{m-1}$ given by
\begin{equation}
\tilde{D_{1}}=\{(z,l) \in D_{1}\times \mathbb{P}^{m-1} | z^{i}l^{j}=z^{j}l^{i}\},
\end{equation}
where $l=[l^{1},\cdots,l^{m}]$ are homogeneous coordinates on $\mathbb{P}^{m-1}$. The map $\pi_{1}$ restricted to $\tilde{D_{1}}$ is the projection $\pi|_{\tilde{D_{1}}}(z,l)=z\in D_{1}$, with the exceptional divisor $E\cong\mathbb{P}^{m-1}$ given by $\pi_{1}^{-1}(0)$. The map $\pi$ gives an isomorphism from $\tilde{D_{1}}\backslash E$ onto the punctured ball $D_{1}\backslash \{0\}$.

On $\tilde{D_{1}}$ we have coordinate charts $\tilde{D_{1}}_{i}=\{l^{i}\neq 0\}$ with local coordinates $\tilde{z}(i)^{1},\cdots,\tilde{z}(i)^{m}$ given by $\tilde{z}(i)^{j}=l^{j}/l^{i}=z^{j}/z^{i}$ for $j\neq i$ and $\tilde{z}(i)^{i}=z^{i}$. The divisor $E$ is given in $\tilde{D_{1}}_{i}$ by $\{\tilde{z}(i)^{i}=0\}$. The line bundle $[E]$ over $\tilde{D_{1}}$ has transition functions $z^{i}/z^{j}$ on $\tilde{D_{1}}_{i}\cap\tilde{D_{1}}_{j}$. We can define a global section $s$ of $[E]$ over $F$ by setting $s(z)=z^{i}$ on $\tilde{D_{1}}_{i}$ and $s=1$ on $F\backslash \pi_{1}^{-1}(D_{1/2})$. The section $s_{1}$ vanishes along the exceptional divisor $E$. We also define a Hermitian metric $h_{1}$ on $[E]$ as follows. First let $h_{2}$ be the Hermitian metric on $[E]$ over $\tilde{D_{1}}$ given in $\tilde{D_{1}}_{i}$ by
\begin{equation}
h_{2}=\frac{\sum_{j=1}^{m}|l^{j}|^{2}}{|l^{i}|^{2}},
\end{equation}
and let $h_{3}$ be the Hermitian metric on $[E]$ over $F\backslash E$ determined by $|s_{1}|_{h_{2}}^{2}=1$. Now define the Hermitian metric $h_{1}$ by $h_{1}=\rho_{1}h_{2}+\rho_{2}h_{3}$, where $\rho_{1}, \rho_{2}$ is a partition of unity for the cover $(\pi_{1}^{-1}(D_{1}), F\backslash \pi_{1}^{-1}(D_{1/2}))$ of $F$, so that $h_{1}=h_{2}$ on $\pi_{1}^{-1}(D_{1/2})$. The function $|s_{1}|_{h_{1}}^{2}$ on $F$ is given on $\pi_{1}^{-1}(D_{1/2})$ by
\begin{equation}\label{r}
|s_{1}|_{h_{1}}^{2}(x)=\sum_{i=1}^{m}|z^{i}|^{2}=: r^{2},
\end{equation}
for $\pi_{1}(x)=(z^{1},\cdots, z^{m})$. On $\pi_{1}^{-1}(D_{1/2}\backslash \{0\})$, the curvature $R(h_{1})$ of $h_{1}$ is given by
\begin{equation}
R(h_{1})=-\sqrt{-1}\partial\overline{\partial}\log(\sum_{i=1}^{m}|z^{i}|^{2}).
\end{equation}

We have the following lemma (see \cite{GH}, p.187).
\begin{lemma}\label{omegaF}
For sufficiently small $\epsilon_{0}>0$,
\begin{equation}
\omega_{F}=\pi_{1}^{*}\omega_{FS}-\epsilon_{0}R(h_{1})
\end{equation}
is a K\"ahler form on $F$.
\end{lemma}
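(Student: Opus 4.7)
The plan is to verify positive-definiteness of $\omega_F=\pi_1^*\omega_{FS}-\epsilon_0 R(h_1)$ pointwise by splitting $F$ into two overlapping regions: a fixed neighborhood of the exceptional divisor $E$ where we compute $R(h_1)$ explicitly, and its complement where $\pi_1^*\omega_{FS}$ is already Kähler so any sufficiently small perturbation stays positive.

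First I would work in the chart $\tilde{D_1}_i$ inside $\pi_1^{-1}(D_{1/2})$, where $h_1=h_2$. Using $z^i=\tilde z(i)^i$ and $z^j=\tilde z(i)^i\,\tilde z(i)^j$ for $j\neq i$, one gets
\[
\sum_{j=1}^m|z^j|^2 \;=\; |\tilde z(i)^i|^2\Bigl(1+\sum_{j\neq i}|\tilde z(i)^j|^2\Bigr),
\]
so $\log|\tilde z(i)^i|^2$ is pluriharmonic away from $\{\tilde z(i)^i=0\}$ and hence
\[
-R(h_1)\;=\;\sqrt{-1}\,\partial\overline{\partial}\log\Bigl(1+\sum_{j\neq i}|\tilde z(i)^j|^2\Bigr),
\]
which extends smoothly across $E$ and equals the pullback by $l=[l^1:\cdots:l^m]$ of the Fubini--Study form on $\mathbb{P}^{m-1}$. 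This pullback is positive semidefinite with kernel spanned exactly by $\partial/\partial\tilde z(i)^i$, i.e.~by the normal direction to $E$. On the other hand, since $\pi_1$ collapses the tangent directions to $E$ and is an immersion in the normal direction, $\pi_1^*\omega_{FS}$ restricted to the vertical distribution at $E$ vanishes while $\pi_1^*\omega_{FS}(\partial/\partial\tilde z(i)^i,\partial/\partial\overline{\tilde z(i)^i})>0$ at points of $E$. Thus at every point of $E$ the two nonnegative forms $\pi_1^*\omega_{FS}$ and $-R(h_1)$ have complementary kernels, so $\omega_F>0$ on $E$ for any $\epsilon_0>0$, and by continuity this persists in a small neighborhood $\mathcal{U}$ of $E$ (independent of $\epsilon_0$, as long as $\epsilon_0$ is bounded).

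Second, on the compact set $F\setminus \mathcal{U}$ the map $\pi_1$ is a local biholomorphism onto an open subset of $\mathbb{P}^m$, hence $\pi_1^*\omega_{FS}\geq c\,\omega_{\mathrm{ref}}$ for some fixed background Kähler form $\omega_{\mathrm{ref}}$ and constant $c>0$. Since $R(h_1)$ is the curvature of a smooth Hermitian metric on the line bundle $[E]$ over the compact manifold $F$, it is a globally smooth $(1,1)$-form and satisfies $|R(h_1)|_{\omega_{\mathrm{ref}}}\leq C$. Therefore choosing $\epsilon_0<c/(2C)$ keeps $\omega_F\geq (c/2)\omega_{\mathrm{ref}}>0$ on $F\setminus\mathcal U$, and combining the two regions yields the claim. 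The one point that needs a little care — and which I expect to be the only real obstacle — is verifying that the transition between $h_2$ and $h_3$ via the partition $\rho_1,\rho_2$ does not spoil positivity in the annular region $\pi_1^{-1}(D_1\setminus D_{1/2})$; but since this region lies inside $F\setminus\mathcal U$ where $\pi_1^*\omega_{FS}$ is uniformly positive and $R(h_1)$ is uniformly bounded, the same smallness-of-$\epsilon_0$ argument applies without modification.
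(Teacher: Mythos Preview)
Your argument is correct and is precisely the standard construction from Griffiths--Harris (p.~187), which is all the paper invokes for this lemma; there is no separate proof in the paper to compare against. The only slightly loose phrase is ``independent of $\epsilon_0$, as long as $\epsilon_0$ is bounded'': in fact on $\pi_1^{-1}(D_{1/2})$ both $\pi_1^*\omega_{FS}$ and $-R(h_1)$ are nonnegative with kernels intersecting only in $\{0\}$ at every point (trivially off $E$, and by your complementary-kernel computation on $E$), so $\omega_F>0$ there for \emph{every} $\epsilon_0>0$, and the smallness condition is needed only on the compact complement---exactly as you then argue.
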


From now on we fix $\epsilon_{0}>0$ as in the Lemma \ref{omegaF}, with $\omega_{F}$ defined in Lemma \ref{omegaF}. In $\pi_{1}^{-1}(D_{1/2}\backslash \{0\})$, which we can identify with $D_{1/2}\backslash \{0\}$, the metric $\omega_{F}$ has the form:
\begin{equation}
\omega_{F}=\pi_{1}^{*}\omega_{FS}+\sqrt{-1}\frac{\epsilon_{0}}{r^{2}}\sum_{i,j=1}^{m}(\delta_{ij}-\frac{\overline{z^{i}}z^{j}}{r^{2}})dz^{i}d\overline{z^{j}},
\end{equation}
for  $r$ given by (\ref{r}). It is easy to see that, in $D_{1/2}\backslash \{0\}$, $R(h_{1})\leq 0$, and the following lemma holds (see \cite{SW1}).

\begin{lemma}\label{2.4}
There exist positive constants $C$ such
\begin{equation}
\pi_{1}^{*}\omega_{FS}\leq \omega_{F}\leq C\frac{\pi_{1}^{*}\omega_{FS}}{|s_{1}|_{h_{1}}^{2}}
\end{equation}
\end{lemma}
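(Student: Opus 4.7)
The plan is to reduce the two inequalities to a direct coordinate computation in the neighborhood $\pi_1^{-1}(D_{1/2}\setminus\{0\})$ of the exceptional divisor $E$, where the explicit formula for $\omega_F$ displayed just above the lemma is available; the complement will then be handled by compactness. Away from $E$, both $\omega_F$ and $\pi_1^*\omega_{FS}$ are smooth K\"ahler forms on a compact set on which $|s_1|^2_{h_1}$ is bounded below, so they are uniformly equivalent and both inequalities hold after absorbing constants into $C$. Thus the genuine content is near $E$.

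In the coordinate region I would identify $\pi_1^{-1}(D_{1/2}\setminus\{0\})$ with $D_{1/2}\setminus\{0\}$ via $\pi_1$ and use two facts: first, $\pi_1^*\omega_{FS}$ is uniformly equivalent to the Euclidean form $g_e = \sqrt{-1}\sum dz^i\wedge d\bar z^i$ on $\overline{D_{1/2}}$ (the Fubini--Study expression differs from $g_e$ only by lower-order terms near $z=0$); second, $|s_1|^2_{h_1} = r^2 = \sum_i |z^i|^2$. The correction term in the displayed expression for $\omega_F$ takes the form $\frac{\sqrt{-1}\epsilon_0}{r^2}\sum_{i,j}A_{ij}\,dz^i\wedge d\bar z^j$ with Hermitian matrix $A_{ij} = \delta_{ij} - \bar z^i z^j/r^2$, which is the orthogonal projection onto the hyperplane perpendicular to the radial direction; its eigenvalues are $0$ (once) and $1$ ($m-1$ times), so $0\leq A\leq I$ as Hermitian matrices.

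Given this, the lower bound $\pi_1^*\omega_{FS}\leq \omega_F$ on the coordinate region is immediate from $A\geq 0$. For the upper bound, combining $A\leq I$ with the uniform equivalence of $\pi_1^*\omega_{FS}$ and $g_e$ gives
\begin{equation*}
\omega_F \;\leq\; g_e + \frac{\epsilon_0}{r^2}g_e \;\leq\; \frac{C}{r^2}g_e \;\leq\; \frac{C'}{|s_1|^2_{h_1}}\pi_1^*\omega_{FS},
\end{equation*}
where the middle inequality uses $r\leq 1/2$ to absorb the first summand. The only conceptual step is identifying the correction matrix as an orthogonal projection and reading off its eigenvalues; once this is noted, the rest is linear-algebra bookkeeping plus a standard compactness argument away from $E$, so I do not anticipate any serious obstacle.
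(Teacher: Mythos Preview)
Your argument is correct and is exactly the natural fleshing-out of the paper's one-line justification (``$R(h_1)\le 0$ in $D_{1/2}\setminus\{0\}$, see \cite{SW1}''): the nonnegativity of the projection matrix $A$ is precisely the statement $-R(h_1)\ge 0$, giving the lower bound on $\pi_1^{-1}(D_{1/2})$, while $A\le I$ together with the uniform equivalence $\pi_1^*\omega_{FS}\sim g_e$ on $\overline{D_{1/2}}$ yields the upper bound; the compactness argument on the complement is standard. One small remark: your compactness step on the annulus $\pi_1^{-1}(D_1\setminus D_{1/2})$ only gives $c\,\pi_1^*\omega_{FS}\le\omega_F$ for some $c>0$, not $c=1$, since $R(h_1)$ need not be nonpositive where the partition of unity is nontrivial---but this is a harmless imprecision already present in the paper's statement (note the stray plural ``constants'') and every application in the paper only requires the upper bound or a lower bound with some constant.
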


Since $(X,Y, \pi, F)$ is a Fano bundle, for any $x\in X$, let $y=\pi(x)$, there exists a Zariski open set $(y\in) U\subset Y$, such that
the diagram
$$\xymatrix{
    \pi^{-1}(U) \ar[rr]^{\Phi}\ar[dr]_{\pi} & & U\times F \ar[dl]^{{\rm Pr}_{1}} \\
     & U &
    }$$
commutes, where ${\rm Pr}_{1}$ is the projection map onto the first factor. Let  $D=Y\backslash U$ and $s$ be a holomorphic section on $[D]$ and let $h$ be a Hermitian metric on $[D]$. Define $|s|^{2}=hs\overline{s}$, for simplicity, we also write $\pi^{*}|s|_{h}^{2}$ as $|s|_{h}^{2}$. On $\pi^{-1}(U)$, we denote $\tilde{\omega}=\Phi^{*}({\rm Pr}_{2}^{*}\pi_{1}^{*}\omega_{FS}+{\rm Pr}_{1}^{*}\omega_{Y})$, we also write $|s_{1}|_{h_{1}}^{2}$ to represent $\Phi^{*}{\rm Pr}_{2}^{*}(|s_{1}|_{h_{1}}^{2})$, where ${\rm Pr}_{2}$ is the projection map onto the second factor.  Then we have the following
\begin{lemma}\label{2.5}
There exist uniform constants $C>0$ and $\alpha>0$ such that for $\omega=\omega(t)$ a solution of the K\"ahler-Ricci flow,
\begin{equation}
\omega(t)\leq \frac{C}{|s|_{h}^{2\alpha}|s_{1}|_{h_{1}}^{2}}\tilde{\omega}.
\end{equation}
\end{lemma}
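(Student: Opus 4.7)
The plan is to prove the lemma by a maximum principle argument on a Schwarz-lemma type test function, following \cite{SW1} and \cite{SSW} but augmented by correction factors for the two degeneration loci of $\tilde\omega$: the base divisor $\pi^{-1}(D)$ and the fiber exceptional divisor $\Phi^{-1}(U\times E)$. A key preliminary step is to replace $\tilde\omega$ with the smooth K\"ahler reference metric $\omega_1 := \Phi^{*}({\rm Pr}_{2}^{*}\omega_F + {\rm Pr}_{1}^{*}\omega_Y)$ on $\pi^{-1}(U)$. By Lemma \ref{2.4} (pulled back through $\Phi$), $\tilde\omega \le \omega_1 \le C|s_1|_{h_1}^{-2}\tilde\omega$, so it is enough to establish $\omega(t)\le C|s|_h^{-2\alpha}\omega_1$ and then convert back, absorbing a factor $|s_1|_{h_1}^{-2}$ into the $\tilde\omega$ statement.

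The test function is
$$Q = \log{\rm tr}_{\omega_1}\omega + \alpha\log|s|_h^{2} + \beta\log|s_1|_{h_1}^{2} - A\varphi,$$
for positive constants $\alpha, \beta, A$ to be chosen. On a slab $\pi^{-1}(U)\times[0,t_1]$ with $t_1<T$, a standard regularization argument places the supremum of $Q$ at an interior point $(x_0, t_0)$ with $t_0 > 0$ (else $Q$ is already bounded). There I apply $(\partial_t-\Delta)Q\ge 0$ together with: the Aubin-Yau / Schwarz-lemma estimate $(\partial_t-\Delta)\log{\rm tr}_{\omega_1}\omega \le C_0\,{\rm tr}_\omega\omega_1$, with $C_0$ a uniform bound from the bisectional curvature of $\omega_1$ (finite since $\omega_F$ is smooth on compact $F$ and $\omega_Y$ is fixed); $(\partial_t-\Delta)(\alpha\log|s|_h^{2})\le C\alpha\,{\rm tr}_\omega\pi^{*}\omega_Y$ coming from the smoothness of the curvature $R(h)$ on $Y$; the identity $\sqrt{-1}\partial\overline{\partial}\log|s_1|_{h_1}^{2} = \epsilon_0^{-1}(\omega_1-\tilde\omega)$ derived from Lemma \ref{omegaF}, giving $(\partial_t-\Delta)(\beta\log|s_1|_{h_1}^{2}) = -\beta\epsilon_0^{-1}({\rm tr}_\omega\omega_1-{\rm tr}_\omega\tilde\omega)\le 0$; and $(\partial_t-\Delta)(-A\varphi)\le C - Ac\,{\rm tr}_\omega\pi^{*}\omega_Y$ from Lemma \ref{LowerBound} and $\hat\omega_t\ge c\pi^{*}\omega_Y$. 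Choosing $A\gg\alpha$ absorbs the $\alpha\,{\rm tr}_\omega\pi^{*}\omega_Y$ term, and $\beta/\epsilon_0 > C_0$ absorbs the $C_0\,{\rm tr}_\omega\omega_1$ term into the negative $\log|s_1|_{h_1}^{2}$ contribution.

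What remains after these absorptions is a positive multiple of ${\rm tr}_\omega\tilde\omega$, which is the residual we must still control. Here Lemma \ref{Hestimate} intervenes: along the horizontal slice $H$ through $x_0$ one has ${\rm tr}_\omega\tilde\omega|_H \le C|s|_h^{-2\alpha}$, and this residual can be absorbed into the $-Ac\,{\rm tr}_\omega\pi^{*}\omega_Y$ contribution after possibly enlarging $A$ and adjusting $\alpha$. The main obstacle is precisely this absorption: the $\log|s_1|_{h_1}^{2}$ correction only provides a negative contribution proportional to ${\rm tr}_\omega(\omega_1-\tilde\omega)$, not to ${\rm tr}_\omega\omega_1$ outright, so the horizontal level set estimate from Section 2 is essential to close the loop together with the Song-Weinkove barrier from \cite{SW1} near the exceptional divisor. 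Once $Q(x_0,t_0)\le C$ has been shown, rearranging gives $Q\le C$ globally, and converting back to $\tilde\omega$ via Lemma \ref{2.4} yields the claimed inequality.
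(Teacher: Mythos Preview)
Your argument has a genuine gap at the step where you claim to control the residual ${\rm tr}_\omega\tilde\omega$. First, Lemma~\ref{Hestimate} points the wrong way: it bounds $\omega|_H$ from above by $|s|_h^{-2\alpha}\,\pi^*\omega_Y|_H$, i.e.\ it estimates ${\rm tr}_{\pi^*\omega_Y|_H}(\omega|_H)$, not ${\rm tr}_\omega\tilde\omega$; and in any case a bound along the horizontal slice $H$ says nothing about the full trace ${\rm tr}_\omega\tilde\omega$ at $x_0$, which involves the fiber directions. Second, the $-A\varphi$ term only produces a negative multiple of ${\rm tr}_\omega\hat\omega_t$, and since $\hat\omega_t\to\pi^*\omega_Y$ as $t\to T$ you obtain at best $-Ac\,{\rm tr}_\omega\pi^*\omega_Y$. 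This cannot absorb ${\rm tr}_\omega\tilde\omega$: the form $\tilde\omega$ has the strictly positive fiber component $\Phi^*{\rm Pr}_2^*\pi_1^*\omega_{FS}$, while $\pi^*\omega_Y$ vanishes identically on fibers, so no choice of $A$ makes $Ac\,{\rm tr}_\omega\pi^*\omega_Y\ge \beta\epsilon_0^{-1}{\rm tr}_\omega\tilde\omega$. Indeed, since the fibers are collapsing under the flow, ${\rm tr}_\omega\tilde\omega$ is expected to blow up as $t\to T$, so there is no uniform control to be had. (There is also a secondary issue: your use of $-A\varphi$ requires an upper bound on $-A\dot\varphi$, i.e.\ a lower bound on $\dot\varphi$, which is not available here.)

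The paper's proof sidesteps all of this by running the Schwarz--Aubin--Yau inequality with respect to $\tilde\omega$ itself rather than your $\omega_1$. The key is that $\omega_{FS}$ has nonnegative bisectional curvature, so in the product curvature $\hat R_{i\bar j p\bar q}\ge({\rm Pr}_1^*R(\omega_Y))_{i\bar j p\bar q}$ and one gets directly
\[
(\tfrac{\partial}{\partial t}-\Delta)\log{\rm tr}_{\tilde\omega}\omega\le C\,{\rm tr}_\omega\pi^*\omega_Y\le C'
\]
via Lemma~\ref{LowerBound}(4). The factor $|s_1|_{h_1}^{2+2\epsilon}$ is then used purely as a barrier to force the maximum off $\Phi^{-1}(U\times E)$ (where $\tilde\omega$ degenerates); on $\pi_1^{-1}(D_{1/2}\setminus\{0\})$ one has $R(h_1)\le 0$, so its contribution $(1+\epsilon)\,{\rm tr}_\omega R(h_1)$ is nonpositive and may simply be dropped. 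No $-A\varphi$ term is needed, and no fiber-direction trace ever appears on the right-hand side.
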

\begin{proof}
Fix $0<\epsilon\leq 1$.  By Lemma \ref{2.4}, we know
$$\tilde{\omega}\geq C \Phi^{*}(|s_{1}|_{h_{1}}^{2}({\rm Pr}_{2}^{*}\omega_{F}+{\rm Pr}_{1}^{*}\omega_{Y})).$$

Since ${\rm Pr}_{2}^{*}\omega_{F}+{\rm Pr}_{1}^{*}\omega_{Y}$ is a fixed K\"ahler metric on $U\times F$ and $\Phi$ is a biholomorphism from $\pi^{-1}(U)$ to $U\times F$, for any fixed time $t$, there exists a constant $\alpha>0$ such that
$${\rm tr}_{\tilde{\omega}}\omega \leq \frac{C}{|s|_{h}^{\alpha}|s_{1}|_{h_{1}}^{2}}.$$

Hence if we set
\begin{equation}
Q_{\epsilon}=\log(|s|_{h}^{2\alpha}|s_{1}|_{h_{1}}^{2+2\epsilon}{\rm tr}_{\tilde{\omega}}\omega).
\end{equation}
For each fixed time $t\in (0, T)$. We know the maximum of $Q_{\epsilon}$ must be obtained at some point $x_{0}\in\Phi^{-1}(U\times F\backslash E)$.  Now we compute at point $(x_{0}, t)$
\begin{eqnarray}
\begin{aligned}
(\frac{\partial}{\partial t}-\Delta)Q_{\epsilon}&=(\frac{\partial}{\partial t}-\Delta)\log {\rm tr}_{\tilde{\omega}}\omega+\alpha{\rm tr}_{\omega}R(h)+(1+\epsilon){\rm tr}_{\omega}R(h_{1})\\
&\leq (\frac{\partial}{\partial t}-\Delta)\log {\rm tr}_{\tilde{\omega}}\omega+\alpha{\rm tr}_{\omega}R(h).
\end{aligned}
\end{eqnarray}
From the argument in the proof of Lemma \ref{Hestimate},  there exists a uniform constant $C>0$ such that
$$\alpha{\rm tr}_{\omega}R(h)\leq C{\rm tr}_{\omega}\pi^{*}\omega_{Y}\leq C'.$$
By a well-known computation (see \cite{Yau, A, Cao}):
\begin{eqnarray}\label{2.25}
\begin{aligned}
(\frac{\partial}{\partial t}-\Delta)\log {\rm tr}_{\tilde{\omega}}\omega
&=\frac{1}{{\rm tr}_{\tilde{\omega}}\omega}(-g^{i\overline{j}}\tilde{g}^{k\overline{q}}\tilde{g}^{p\overline{l}}g_{k\overline{l}}\tilde{R}_{i\overline{j}p\overline{q}}-g^{i\overline{j}}\tilde{g}^{k\overline{l}}g^{p\overline{q}}\tilde{\nabla}_{i}g_{k\overline{q}}\tilde{\nabla}_{\overline{j}}g_{p\overline{l}}+\frac{|\nabla {\rm tr}_{\tilde{\omega}}\omega|^{2}}{{\rm tr}_{\tilde{\omega}}\omega})\\
&\leq -\frac{1}{{\rm tr}_{\tilde{\omega}}\omega} g^{i\overline{j}}\tilde{g}^{k\overline{q}}\tilde{g}^{p\overline{l}}g_{k\overline{l}}\tilde{R}_{i\overline{j}p\overline{q}}
\end{aligned}
\end{eqnarray}
Denote $\hat{g}$ as the product metric ${\rm Pr}_{2}^{*}\pi_{1}^{*}\omega_{FS}+{\rm Pr}_{1}^{*}\omega_{Y}$, then $\tilde{g}=\Phi^{*}\hat{g}$. We compute with metric $\hat{g}$, since the bisectional curvature of $\omega_{FS}$ is positive, we have
\begin{equation}
\hat{R}_{i\overline{j}p\overline{q}}=({\rm Pr}_{2}^{*}\pi_{1}^{*}R(\omega_{FS}))_{i\overline{j}p\overline{q}}+({\rm Pr}_{1}^{*}R(\omega_{Y}))_{i\overline{j}p\overline{q}}\geq ({\rm Pr}_{1}^{*}R(\omega_{Y}))_{i\overline{j}p\overline{q}}.
\end{equation}
Since $\Phi^{*}{\rm Pr}_{1}^{*}=\pi^{*}$, pulling back via the map $\Phi$, we have
$$g^{i\overline{j}}\tilde{g}^{k\overline{q}}\tilde{g}^{p\overline{l}}g_{k\overline{l}}\tilde{R}_{i\overline{j}p\overline{q}}\geq -C({\rm tr}_{\tilde{\omega}}\omega)({\rm tr}_{\omega}\pi^{*}\omega_{Y})$$
for some uniform constant. Hence we obtain that
\begin{equation*}
(\frac{\partial}{\partial t}-\Delta)\log {\rm tr}_{\tilde{\omega}}\omega\leq C{\rm tr}_{\omega}{\rm Pr}_{1}^{*}\omega_{Y}\leq C'.
\end{equation*}
Hence
\begin{equation}
(\frac{\partial}{\partial t}-\Delta)Q_{\epsilon}\leq C.
\end{equation}
Then using the maximum principle and letting $\epsilon\rightarrow 0$, we obtain the lemma.
\end{proof}

We assume that $|s|_{h}(y)=1$ and denote $U_{1/2}=\{\tilde{y}\in U||s|_{h}^{2}(y)>1/2\}$.

Consider the holomorphic vector field
$$\sum_{i}^{m}z^{i}\frac{\partial}{\partial z^{i}},$$
defined on the unit ball $D_{1}$. This defines via $\pi_{1}$ a holomorphic vector field $V$ on $\pi_{1}^{-1}(D_{1})\subset F$ which vanishes to order $1$ along the exceptional divisor $E$. We can extend $V$ to be a smooth $T^{1,0}$ vector field on the whole of $F$, and ${\rm Pr}_{2}^{*}(V)$ to be a smooth $T^{1,0}$ vector field on $U_{1/2}\times F$,  then pull back by $\Phi$ and then extend it to a vector $\tilde{V}$ on whole of $X$ which vanish on $X\backslash \pi^{-1}(U_{1/2})$. We then have the following lemma.
\begin{lemma}\label{2.6}
For $\omega=\omega(t)$ a solution of the K\"ahler-Ricci flow, we have the estimate
\begin{equation}
|\tilde{V}|_{\omega}^{2}\leq C|s_{1}|_{h_{1}},
\end{equation}
for a uniform constant $C$. Locally, in $D_{1/2}\backslash \{0\}$ we have
\begin{equation}
|W|_{g}^{2}\leq \frac{C}{r},
\end{equation}
for
$$ W=\sum_{i=1}^{m}(\frac{x^{i}}{r}\frac{\partial}{\partial x^{i}}+\frac{y^{i}}{r}\frac{\partial}{\partial y^{i}})$$
the unit length radial vector field with respect to $g_{e}$, where $z^{i}=x^{i}+\sqrt{-1}y^{i}.$
\end{lemma}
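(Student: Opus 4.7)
The plan is to prove the first inequality by a parabolic maximum principle argument in the spirit of Song-Weinkove \cite{SW1}, then deduce the local statement by a change of frame.

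The preliminary ingredients are easy.  In the blow-up chart $\tilde D_{1,i}$ one has $V = \tilde z^i \partial_{\tilde z^i}$ and $|s_1|_{h_1}^2 \sim |\tilde z^i|^2$, so the smoothness of $\pi_1^*\omega_{FS}$ across $E$ (computed directly in this chart) yields $|V|_{\pi_1^*\omega_{FS}}^2 \leq C|s_1|_{h_1}^2$.  Feeding this into Lemma \ref{2.5}, together with the fact that $|s|_h$ is bounded below on $\pi^{-1}(U_{1/2})$, already gives the uniform but weaker bound $|\tilde V|_\omega^2 \leq C$; the point is to gain the extra factor $|s_1|_{h_1}$ on the right.

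For the improvement I would introduce
\begin{equation*}
Q_\epsilon \;=\; \log|\tilde V|_\omega^2 \;-\; (1+\epsilon)\log|s_1|_{h_1} \;+\; A\log|s|_h^2
\end{equation*}
with $\epsilon>0$ small and $A>0$ large, and control its spacetime supremum on $(\pi^{-1}(U)\setminus E)\times[0,T)$.  Because $\tilde V$ vanishes to first order along $E$, the smooth-at-fixed-time bound $|\tilde V|_\omega^2 \leq C(t)|s_1|_{h_1}^2$ forces $Q_\epsilon\to -\infty$ near $E$; the weight $A\log|s|_h^2$ drives $Q_\epsilon\to -\infty$ near $\pi^{-1}(D)$; and outside $\pi^{-1}(U_{1/2})$ the extension $\tilde V\equiv 0$ sends $\log|\tilde V|_\omega^2$ to $-\infty$.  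Hence the spacetime supremum of $Q_\epsilon$ is attained at some interior point $(x_0,t_0)$ where $\tilde V$ is honestly holomorphic.

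At $(x_0,t_0)$ the holomorphicity of $\tilde V$ together with the K\"ahler-Ricci flow equation gives $(\partial_t-\Delta)\log|\tilde V|_\omega^2 \leq 0$, carrying with it a beneficial reserve term $-|\nabla\tilde V|_\omega^2/|\tilde V|_\omega^2$, while $(\partial_t-\Delta)\log|s_1|_{h_1}^2 = {\rm tr}_\omega R(h_1)$ and $(\partial_t-\Delta)\log|s|_h^2 = {\rm tr}_\omega R(h)$.  The contribution $A\,{\rm tr}_\omega R(h)$ is bounded by $AC$ exactly as in the proof of Lemma \ref{Hestimate}.  The core technical difficulty is the curvature term $-\tfrac{1+\epsilon}{2}{\rm tr}_\omega R(h_1)$: since $R(h_1)\leq 0$ in $\pi_1^{-1}(D_{1/2})$ this quantity is a priori only nonnegative and can blow up as $\omega$ degenerates along the fibers.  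I expect to handle it by substituting the critical-point identity $\nabla\log|\tilde V|_\omega^2 = \tfrac{1+\epsilon}{2}\nabla\log|s_1|_{h_1}^2 - A\nabla\log|s|_h^2$ at $x_0$ into the full expansion of $(\partial_t-\Delta)\log|\tilde V|_\omega^2$ and using Cauchy-Schwarz to absorb the resulting cross terms into the reserve.  The expected end state is an inequality of the form $(\partial_t-\Delta)Q_\epsilon \leq -Q_\epsilon + C$ at $(x_0,t_0)$, which combined with $(\partial_t-\Delta)Q_\epsilon \geq 0$ at the spacetime max yields $\sup Q_\epsilon \leq C$ uniformly in $\epsilon$.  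Sending $\epsilon\to 0$ then gives $|\tilde V|_\omega^2 \leq C|s_1|_{h_1}$ on the support of $\tilde V$, and the bound is trivial elsewhere.  The local statement $|W|_g^2\leq C/r$ follows from the identity $W = (V+\bar V)/r$ (obtained by writing $\sum z^i\partial_{z^i}$ in real coordinates), which yields $|W|_g^2 = (2/r^2)|V|_g^2 \leq 2C/r$ on $D_{1/2}\setminus\{0\}$.
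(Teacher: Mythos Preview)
Your deduction of the local estimate $|W|_g^2\leq C/r$ from the first inequality is correct; the problem is the argument for the first inequality itself.

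The quantity $Q_\epsilon=\log|\tilde V|_\omega^2-(1+\epsilon)\log|s_1|_{h_1}+A\log|s|_h^2$ does produce, as you say, the bad term $-\tfrac{1+\epsilon}{2}\,{\rm tr}_\omega R(h_1)$.  In $D_{1/2}\setminus\{0\}$ one has $|s_1|_{h_1}^2=r^2$ and hence
\[
-{\rm tr}_\omega R(h_1)\;=\;\Delta_\omega\log r^2\;=\;\frac{{\rm tr}_\omega\omega_e}{r^2}\;-\;|\nabla\log r^2|_\omega^2.
\]
Substituting the critical-point identity and expanding only controls the gradient pieces; the term $\tfrac{1+\epsilon}{2}\,{\rm tr}_\omega\omega_e/r^2$ remains.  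Your proposed absorber is $-|\nabla\tilde V|_\omega^2/|\tilde V|_\omega^2$.  But since $\partial_i V^k=\delta_i^k$ in the fiber, in the purely fiber situation this quantity equals $-m/|\tilde V|_\omega^2$, whereas $({\rm tr}_\omega\omega_e)\,|\tilde V|_\omega^2/r^2$ can be made arbitrarily large compared with $m$ simply by taking the eigenvalues of $g$ in the fiber directions highly disparate (e.g.\ diagonal $g$ with $g_{1\bar 1}=1$, $g_{2\bar 2}=\delta\ll1$, at a point with $z^2=0$).  So the reserve is not big enough, and there is no mechanism that produces the $-Q_\epsilon$ term you anticipate on the right-hand side.

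The paper's proof sidesteps this completely by \emph{not} putting $|s_1|_{h_1}$ into the barrier.  Instead it couples $\log|\tilde V|_\omega^2$ with $\log\bigl({\rm tr}_{\tilde\omega}\omega\bigr)$, where $\tilde\omega$ is built from the \emph{flat} Euclidean metric on $D_1$ together with $\pi^*\omega_Y$; flatness kills the dangerous curvature contribution, and the parabolic Schwarz computation plus your inequality $(\partial_t-\Delta)\log|\tilde V|_\omega^2\le 0$ give $\bigl({\rm tr}_{\omega_0}\omega\bigr)\,|\tilde V|_\omega^2\le C$.  The weight $|s_1|_{h_1}$ then enters only through the elementary pointwise algebraic inequality
\[
|\tilde V|_\omega^2\;\le\;({\rm tr}_{\omega_0}\omega)\,|\tilde V|_{\omega_0}^2,
\]
which combined with the previous bound yields $|\tilde V|_\omega^4\le C\,|\tilde V|_{\omega_0}^2\sim C\,|s_1|_{h_1}^2$, hence $|\tilde V|_\omega^2\le C\,|s_1|_{h_1}$.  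This last trick is the missing idea in your proposal.
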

In the statement and proof of the lemma, we are identifying $\pi_{1}^{-1}(D_{1/2}\backslash \{0\})$ with $D_{1/2}\backslash \{0\}$ via the map $\pi_{1}$, writing $g$ for the K\"ahler metric $(\pi_{1}^{-1})^{*}(\omega |_{F})$.

\begin{proof}
In this proof, we denote $\omega_{U}$ for $\Phi^{*}({\rm Pr}_{1}^{*}\omega_{Y}+{\rm Pr}_{2}^{*}\omega_{F})$ on $\pi^{-1}(U)$ and a Hermitian metric $\tilde{\omega}=\rho_{1}\omega_{0}+\rho_{2}\omega_{U}$, where $\rho_{1}, \rho_{2}$ is a partition of unity for the cover $(X\backslash \pi^{-1}(U_{1/2}),\pi^{-1}(U))$, so that $\tilde{\omega}=\omega_{U}$ on $\pi^{-1}(U_{1/2})$ and  which is equivalent to metric $\omega_{0}$.

From the Lemma \ref{omegaF} we have, in $D_{1/2}$,
\begin{equation}
|\tilde{V}|_{\tilde{\omega}}^{2}=|V|_{\omega_{F}}^{2}=|V|_{\pi^{*}\omega_{FS}}^{2}.
\end{equation}
It follows that $|\tilde{V}|_{\omega_{0}}^{2}$ is uniformly equivalent to $|s_{1}|_{h_{1}}^{2}=r^{2}$ in $D_{1/2}$.

For any fixed point $(x,t)$. We compute in a normal coordinate system for $g$ at $(x,t)$, we have
\begin{equation}\label{2.41}
(\frac{\partial}{\partial t}-\Delta)\log|\tilde{V}|_{\omega}^{2}=\frac{1}{|\tilde{V}|_{\omega}^{2}}(-g^{i\overline{j}}g_{k\overline{l}}(\partial_{i}\tilde{V}^{k})(\overline{\partial_{j}\tilde{V}^{l}})+\frac{|\nabla|\tilde{V}|_{\omega}^{2}|_{\omega}^{2}}{|\tilde{V}|_{\omega}^{2}})\leq 0.
\end{equation}
Where we use the Cauchy-Schwarz inequality to get the above inequality (the detail, see the proof of Lemma 2.6 in \cite{SW1}).

Then using the maximum principle, we obtain that there exists a positive constant $C$ such that
\begin{equation}\label{2.37}
|\tilde{V}|_{\omega}^{2}\leq C|s_{1}|_{h_{1}}^{2}.
\end{equation}
Now define a Hermitian metric $\tilde{\omega}_{F}$ on $\mathbb{P}^{m}$ by
\begin{center}
$\tilde{\omega}_{F}=\omega_{e}$ \quad on \quad $D_{1}$,
\end{center}
and extending in an arbitrary way to be a smooth Hermitian metric on $F$. For small $\epsilon>0$, we consider the quantity
\begin{equation}
Q_{\epsilon}=\log(|\tilde{V}|_{\omega}^{2+2\epsilon}|s|_{h}^{2\alpha+2\epsilon}{\rm tr}_{\Phi^{*}({\rm Pr}_{2}^{*}\pi_{1}^{*}\tilde{\omega}_{F}+{\rm Pr}_{1}^{*}\omega_{Y})}\omega)-At
\end{equation}
where $\alpha$ is the constant in Lemma \ref{2.5} and $A$ is a constant to be determined. Since $\tilde{\omega}_{F}$ is uniformly equivalent to $\omega_{FS}$, we see that for fixed $t$, using Lemma \ref{2.5} and (\ref{2.37}),
$$(|V|_{\omega}^{2+2\epsilon}|s|_{h}^{2\alpha+2\epsilon}{\rm tr}_{\Phi^{*}({\rm Pr}_{2}^{*}\pi_{1}^{*}\tilde{\omega}_{F}+{\rm Pr}_{1}^{*}\omega_{Y})}\omega)(x)$$
tends to zero as $x$ tends to $X\backslash \pi^{-1}(U_{1/2})\cup \Phi^{-1}(U_{1/2}\times E)$ and thus $Q_{\epsilon}(x)$ tends to negative infinity. We now applying the maximum principle to $Q_{\epsilon}$. Since $Q_{\epsilon}$ is uniformly bounded from above on $\Phi^{-1}(U_{1/2}\times F)\backslash \Phi^{-1}(U_{1/2}\times D_{1/2}\backslash \{0\})$, we only need to rule out the case when $Q_{\epsilon}$ attains its maximum at a point in $ \Phi^{-1}(U_{1/2}\times D_{1/2}\backslash \{0\})$.  Assume that at some point $(x_{0},t_{0})\in  \Phi^{-1}(U_{1/2}\times D_{1/2}\backslash \{0\})\times (0, T)$, we have $\sup_{ \Phi^{-1}(U\times F\backslash E)\times [0, t_{0}]}Q_{\epsilon}=Q_{\epsilon}(x_{0},t_{0}).$

As in the proof of Lemma \ref{2.5}, we have
\begin{equation}
(\frac{\partial}{\partial t}-\Delta)\log |s|_{h}^{2\alpha+2\epsilon}=(\alpha+\epsilon){\rm tr}_{\omega}R(h)\leq C{\rm tr}_{\omega}\pi^{*}\omega_{Y}.
\end{equation}
By (\ref{2.25}), pulling back by the biholomorphic map $\Phi$, we have in $ \Phi^{-1}(U_{1/2}\times D_{1/2}\backslash \{0\}),$
\begin{equation}
(\frac{\partial}{\partial t}-\Delta)\log({\rm tr}_{\Phi^{*}({\rm Pr}_{2}^{*}\pi_{1}^{*}\tilde{\omega}_{F}+{\rm Pr}_{1}^{*}\omega_{Y})}\omega)\leq C{\rm tr}_{\omega}\Phi^{*}{\rm Pr}_{1}^{*}\omega_{Y}.
\end{equation}
Hence
\begin{equation}
(\frac{\partial}{\partial t}-\Delta)\log |s|_{h}^{2\alpha+2\epsilon}({\rm tr}_{\Phi^{*}({\rm Pr}_{2}^{*}\pi_{1}^{*}\tilde{\omega}_{F}+{\rm Pr}_{1}^{*}\omega_{Y})}\omega)\leq C{\rm tr}_{\omega}\pi^{*}\omega_{Y}\leq C'.
\end{equation}
Take $A=C'+1.$ By (\ref{2.41}), at $(x_{0}, t_{0})$, we obtain
\begin{equation}
0\leq (\frac{\partial}{\partial t}-\Delta)Q_{\epsilon}\leq -1,
\end{equation}
a contradiction. Thus $Q_{\epsilon}$ is uniformly bounded from above. Letting $\epsilon$ tend to zero, since $|s|_{h}^{2}>1/2$ on $\pi^{-1}(U_{1/2})$, we obtain
\begin{equation}
({\rm tr}_{\Phi^{*}({\rm Pr}_{2}^{*}\pi_{1}^{*}\omega_{FS}+{\rm Pr}_{1}^{*}\omega_{Y})}\omega)|\tilde{V}|_{\omega}^{2}\leq C,
\end{equation}
for some uniform constant $C$. By Lemma \ref{2.4}, we have
\begin{equation*}
({\rm tr}_{\omega_{0}}\omega)|\tilde{V}|_{\omega}^{2}\leq C,
\end{equation*}
and since $|\tilde{V}|_{\omega}^{2}\leq({\rm tr}_{\omega_{0}}\omega)|\tilde{V}|_{\omega_{0}}^{2}$ this gives
\begin{equation}
|\tilde{V}|_{\omega}^{4}\leq C|\tilde{V}|_{\omega_{0}}^{2},
\end{equation}
then the lemma follows from the fact that $|\tilde{V}|_{\omega_{0}}^{2}$ is uniformly equivalent to $|s_{1}|_{h_{1}}^{2}$ in $D_{1/2}$.
\end{proof}

Next we estimate on the lengths of spherical and radial paths in the punctured ball $D_{1/2}\backslash \{0\}$, which again we identify with its preimage in each fiber under $\pi_{1}$.
\begin{lemma}\label{2.7}
We have
\begin{itemize}
\item[(1)] For any $y\in Y$ and for $0<r<1/2$, the diameter of the $2m-1$ sphere $S_{r}$ of radius $r$ in $D_1$ centered at the origin with the metric induced from $\omega|_{\pi^{-1}(y)}$ is uniformly bounded from above, independent of $r$ and $y$.
\item[(2)] For any $z\in D_{1/2}\backslash \{0\}$, the length of a radial path $\gamma(\lambda)=\lambda z$ for $\lambda\in (0, 1]$ with respect to $\omega|_{\pi^{-1}(y)}$ is uniformly bounded from above by a uniform constant multiple of $|z|^{1/2}$.
\end{itemize}
Hence the diameter of $D_{1/2}\backslash \{0\}$ with respect to $\omega|_{\pi^{-1}(y)}$ is uniformly bounded from above and
$$ {\rm diam}(\pi^{-1}(y), \omega|_{\pi^{-1}(y)})\leq C.$$
\end{lemma}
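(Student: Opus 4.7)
The plan is to bound the two fiber distances separately, using Lemma \ref{2.5} for spherical directions and Lemma \ref{2.6} for radial directions, and then assemble a diameter bound. Fix $y\in Y$ and choose a trivializing Zariski open set $U\ni y$ with $|s|_{h}(y)>1/\sqrt{2}$, so that $y\in U_{1/2}$. Under the $\Phi$-identification $\pi^{-1}(y)\cong F$, the restriction $\tilde{\omega}|_{\pi^{-1}(y)}$ becomes $\pi_{1}^{*}\omega_{FS}$ (the $Y$-factor is constant along the fiber), so Lemma \ref{2.5} reduces to $\omega|_{\pi^{-1}(y)}\leq \frac{C}{|s_{1}|_{h_{1}}^{2}}\pi_{1}^{*}\omega_{FS}$, and Lemma \ref{2.6} to the pointwise bound $|W|_{g}^{2}\leq C/r$ on the chart $D_{1/2}\backslash\{0\}$.

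For part (1): on the sphere $S_{r}\subset D_{1/2}$, any $g_{e}$-unit tangent vector $X$ orthogonal to the radial direction satisfies $|X|_{\pi_{1}^{*}\omega_{FS}}^{2}\leq C_{0}$, since $\omega_{FS}$ is uniformly equivalent to $g_{e}$ on $D_{1/2}$. Combined with the estimate above this gives $|X|_{\omega}^{2}\leq C/r^{2}$, so spherical $g_{e}$-lengths scale by at most $C^{1/2}/r$ when measured with respect to $\omega$. Since the $g_{e}$-diameter of $S_{r}$ is $2r$, its $\omega$-diameter is at most $2C^{1/2}$, a bound independent of $r$ and $y$.

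For part (2): the radial curve $\gamma(\lambda)=\lambda z$ has velocity $\gamma'(\lambda)=z$, which is $|z|$ times the $g_{e}$-unit radial vector $W$ evaluated at the point $\gamma(\lambda)$ lying at Euclidean distance $\lambda|z|$ from the origin. Lemma \ref{2.6} then yields
\[
|\gamma'(\lambda)|_{\omega}\leq |z|\sqrt{C/(\lambda|z|)}=\sqrt{C|z|/\lambda},
\]
and integrating in $\lambda\in(0,1]$ produces length $\leq 2\sqrt{C}\,|z|^{1/2}$, as claimed.

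For the diameter bound, I would split the fiber into three regions and combine the pieces. On $F\backslash\pi_{1}^{-1}(D_{1/2})$ the quantity $|s_{1}|_{h_{1}}$ is uniformly bounded below, so Lemma \ref{2.5} gives $\omega|_{\pi^{-1}(y)}\leq C\omega_{F}$ on this compact region, which therefore has uniformly bounded $\omega$-diameter. On $D_{1/2}\backslash\{0\}$, every point is joined to the reference sphere $S_{1/2}$ by a radial path whose $\omega$-length is uniformly bounded (truncating the estimate in (2) to $\lambda\in[|z|,1/2]$), and $S_{1/2}$ itself has uniformly bounded $\omega$-diameter by (1). Finally, points of the exceptional divisor $E$ appear as limits $\lambda\to 0$ of such radial paths; part (2) shows these limits exist with lengths going to zero, so points of $E$ lie at uniformly bounded $\omega$-distance from $S_{1/2}$ as well. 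Assembling bounds on the three pieces completes the proof. The only delicate point is that Lemma \ref{2.5} degenerates along $E$, so $\omega|_{E}$ is not controlled directly; the argument circumvents this by reaching $E$ through the radial paths of (2) rather than from within $E$ itself.
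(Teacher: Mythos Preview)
Your proof is correct and takes essentially the same approach as the paper, which normalizes $|s|_h^2(y)=1$, records the fiberwise bound $\omega|_{\pi^{-1}(y)}\leq \frac{C}{|s_1|_{h_1}^2}\pi_1^*\omega_{FS}$ from Lemma~\ref{2.5}, and then defers the remaining computations to Lemma~2.7 of \cite{SW1}; you have simply unpacked that reference, using Lemma~\ref{2.5} for the spherical directions and Lemma~\ref{2.6} for the radial direction. One harmless slip: the intrinsic $g_e$-diameter of $S_r$ is $\pi r$ rather than $2r$, but since only the proportionality to $r$ matters this does not affect the conclusion.
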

\begin{proof}
For any $y\in Y$, we can choose $|s|_{h}^{2}(y)=1$. Then using Lemma \ref{2.5}, consider the metric $\omega|_{\pi^{-1}(y)}$, we have
\begin{equation}
\omega|_{\pi^{-1}(y)}\leq \frac{C}{|s_{1}|_{h_{1}}^{2}}\pi_{1}^{*}\omega_{FS}.
\end{equation}
Then using the same argument in the proof of Lemma 2.7 in \cite{SW1}, we obtain the lemma.
\end{proof}

Now we can prove the (1) of Theorem \ref{MainThm1a}.
\begin{proof}[Proof of (1) in Theorem \ref{MainThm1a}]
For any $p,q\in X$. Denote $p_{1}=\pi(p), q_{1}=\pi(q)$.  Then there exist two open subset $U_{1}, U_{2}\subset Y$, such that $p_{1}\in U_{1}, p_{2}\in U_{2}$  and there exist holomorphic maps $\Phi_{1}, \Phi_{2}$ such that $\Phi_{i}: \pi^{-1}(U_{i})\rightarrow U_{i}\times F (i=1,2)$ are the biholomorphic map. Denote $p_{2}={\rm Pr}_{2}\Phi_{1}(p), q_{2}={\rm Pr}_{2}\Phi_{2}(q)$. Since $Y$ is compact, we may assume $U_{1}\cap U_{2}\neq \varnothing$. We assume $\tilde{p}_{1}\in U_{1}\cap U_{2}$, denote $\tilde{p}=\Phi_{1}^{-1}((\tilde{p}_{1}, p_{2}))$ and $\tilde{q}=\Phi_{2}^{-1}((\tilde{p}_{1}, q_{2}))$, by Lemma \ref{Hestimate} we know $d_{\omega(t)}(p, \tilde{p})\leq C$ and $d_{\omega(t)}(q, \tilde{q})\leq C$ .

Since $\tilde{p}, \tilde{q}\in \pi^{-1}(\tilde{p}_{1})$, then by Lemma \ref{2.7}, we have $d_{\omega(t)}(\tilde{p},\tilde{q})\leq C$. Using the triangle inequality  we finish the proof of (1) in Theorem \ref{MainThm1}.
\end{proof}

\subsection{Diameter of fiber tends to zero}
In this subsection, we prove that the diameter of fiber with $\omega(t)$ tends to zero as $t\rightarrow T$. Let $d_{\omega}=d_{\omega(t)}$ be the distance function on $X$ associated to the evolving K\"ahler metric $\omega$. Using the same argument in the proof of Lemma 3.2 and Lemma 3.3 in \cite{SW1}, we prove the following lemmas.
\begin{lemma}\label{3.2}
Fix a point $y_{0}\in Y$. There exists a uniform constant $C$ (independent of $y_{0}$) such that for any $p,q\in E$, and any $t\in [0, T)$,
\begin{equation}
d_{\omega}(\Phi^{-1}(y_{0}, p),\Phi^{-1}(y_{0}, q))\leq C(T-t)^{1/3}.
\end{equation}
\end{lemma}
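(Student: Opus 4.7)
The target is a diameter estimate for the fiber-wise exceptional divisor $E \cong \mathbb{P}^{m-1}$ that decays at the rate $(T-t)^{1/3}$. The plan is to follow the strategy of Song--Weinkove in \cite{SW1}, Lemmas 3.2--3.3, adapted to the present Fano-bundle setting.

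\textbf{Reduction and parameterization.} Using the trivialization $\Phi$ over an open $U\ni y_0$ and Lemma \ref{Hestimate}, it suffices to bound the distance, inside the fiber $\pi^{-1}(y_0) \cong F$, between $\Phi^{-1}(y_0, p)$ and $\Phi^{-1}(y_0, q)$ for $p, q \in E$. Since $\mathbb{P}^{m-1}$ is homogeneous under its $\mathrm{PGL}(m)$-action, any two points of $E$ can be joined by a concatenation, of uniformly bounded $g_0$-length, of integral curves of finitely many global holomorphic vector fields $\xi_1, \ldots, \xi_N$ on $\mathbb{P}^{m-1}$. Each $\xi_j$ admits a holomorphic extension to a neighborhood of $E$ in $F$, which is then transported to $X$ via $\Phi^{-1}$ and multiplied by a cutoff to produce a smooth $T^{1,0}$ vector field $\tilde\xi_j$ on $X$, exactly as in the construction of $\tilde V$ in Lemma \ref{2.6}.

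\textbf{Rate-dependent vector field bound.} The heart of the matter is the estimate
\begin{equation*}
|\tilde\xi_j|_{\omega(t)}^2 \leq C\,(T-t)^{2/3} \qquad \text{on } X,\quad t\in[0,T).
\end{equation*}
Given this, integrating along the path of the previous paragraph yields $d_{\omega(t)}(\Phi^{-1}(y_0,p), \Phi^{-1}(y_0,q)) \leq C(T-t)^{1/3}$. The rate-dependent estimate is proved by applying the parabolic maximum principle to a quantity of the schematic form
\begin{equation*}
Q_\epsilon = \log |\tilde\xi_j|_{\omega}^2 \;-\; \tfrac{2}{3}\log(T-t) \;+\; A\bigl(\dot\varphi - \tfrac{1}{T}\varphi\bigr) \;+\; a\log|s|_h^{2} \;+\; \epsilon\log|s_1|_{h_1}^{2},
\end{equation*}
for constants $A, a > 0$ large and $\epsilon > 0$ small, chosen as in \cite{SW1}. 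The key inputs are: (i) $(\partial_t - \Delta)\log |\tilde\xi_j|_\omega^2 \leq 0$ via the Cauchy--Schwarz computation of Lemma \ref{2.6}; (ii) the bounds $|\varphi|, |\dot\varphi| \leq C$ of Lemma \ref{LowerBound}, combined with the evolution of $\dot\varphi - \tfrac{1}{T}\varphi$, which yields a negative term of order $1/(T-t)$ that balances the $\tfrac{2}{3(T-t)}$ contribution from the $-\tfrac{2}{3}\log(T-t)$ factor; (iii) the bound $a\,\mathrm{tr}_\omega R(h) \leq C\,\mathrm{tr}_\omega \pi^*\omega_Y \leq C'$ from the proof of Lemma \ref{Hestimate}; and (iv) the regularizing role of $\epsilon\log|s_1|_{h_1}^{2}$, which pushes the supremum of $Q_\epsilon$ off $E$ so that the maximum principle applies in the interior, after which one lets $\epsilon \to 0$.

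\textbf{Main obstacle.} The delicate point is the calibration that produces precisely the exponent $2/3$. The balance matches the $\tfrac{2}{3(T-t)}$ term against the negative contribution coming from $A(\partial_t - \Delta)\bigl(\dot\varphi - \tfrac{1}{T}\varphi\bigr)$, which ultimately rests on $\ddot\varphi = -R$ together with a scalar curvature lower bound of order $-1/(T-t)$. The only genuinely new feature in our setting, compared with \cite{SW1}, is the base-direction degeneracy along $\pi^{-1}(D)$, which is absorbed by the $|s|_h^{2a}$ factor and controlled via Lemma \ref{Hestimate}; the remaining verification is formally identical to \cite{SW1}.
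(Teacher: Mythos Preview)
Your approach differs from the paper's and, as written, has a genuine gap at exactly the point you flag as the ``main obstacle.'' The paper does not prove a pointwise decay $|\tilde\xi_j|^2_{\omega}\le C(T-t)^{2/3}$ for vector fields tangent to $E$; it follows \cite{SW1}, Lemma~3.2, which is an integral/Fubini argument using Lemma~\ref{2.5} and Lemma~\ref{2.7}. Concretely one takes a line $\ell\cong\mathbb P^1$ through $p,q$ in the fiber, uses the cohomological bound $\int_\ell\omega(t)\le C(T-t)$, thickens by $\sigma=(T-t)^{1/3}$ in a local chart, applies Fubini to find a nearby slice of $g$-length $\le C((T-t)/\sigma)^{1/2}=C(T-t)^{1/3}$, and connects the endpoints to that slice using the radial estimate of Lemma~\ref{2.7} together with the pointwise bound of Lemma~\ref{2.5}. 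No parabolic maximum principle with a time-weighted barrier is involved.

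Your proposed barrier does not produce the needed negative term. A direct computation gives $(\partial_t-\Delta)\dot\varphi=\mathrm{tr}_\omega\chi$: the scalar curvature contribution from $\ddot\varphi=-R$ is exactly cancelled by the one in $\Delta\dot\varphi$, so no $-R$ survives. Combining with $(\partial_t-\Delta)\varphi=\dot\varphi-n+\mathrm{tr}_\omega\hat\omega_t$ one finds that $(\partial_t-\Delta)(\dot\varphi-\tfrac{1}{T}\varphi)$ consists only of $\mathrm{tr}_\omega\pi^*\omega_Y$, $-\mathrm{const}\cdot\mathrm{tr}_\omega\omega_0$, $-\tfrac{1}{T}\dot\varphi$ and bounded constants; none of these is of order $-1/(T-t)$ at a maximum of $Q_\epsilon$, so there is nothing to absorb the $+\tfrac{2}{3(T-t)}$ coming from $-\tfrac{2}{3}\log(T-t)$. (Even if $-R$ were present, a lower bound $R\ge -C/(T-t)$ would make $-R\le C/(T-t)$, i.e.\ positive, which goes the wrong way.) The inequality $(\partial_t-\Delta)\log|\tilde\xi_j|^2_\omega\le 0$ by itself only yields $\sup_X|\tilde\xi_j|^2_\omega\le \sup_X|\tilde\xi_j|^2_{\omega_0}$, a constant with no decay. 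In short, the calibration you describe cannot be carried out, and the lemma should instead be proved by the Fubini argument of \cite{SW1} using Lemmas~\ref{2.5} and \ref{2.7}, exactly as the paper indicates.
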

\begin{proof}
We can assume that $|s|_{h}^{2}(y_{0})=1$. We replace $E$ by $\Phi^{-1}(\{y_{0}\}\times E)$ in the proof of Lemma 3.2 in \cite{SW1}, using Lemma \ref{2.5} and using Lemma \ref{2.7}. See the argument of the proof of Lemma 3.2 in \cite{SW1}.
\end{proof}

Combine Lemma \ref{2.7} and Lemma \ref{3.2}, we have
\begin{lemma}\label{3.3}
Fix a point $y_{0}\in Y$. There exists a uniform constant $C$ (independent of $y_{0}$) such that for any $0<\delta_{0}<1/2$ and for any$t\in [0,T)$
\begin{equation}
{\rm diam}_{\omega(t)}(\Phi^{-1}(\{y_{0}\}\times \pi_{1}^{-1}(D_{\delta_{0}})))<C(|\delta_{0}|^{1/2}+(T-t)^{1/3}).
\end{equation}
\end{lemma}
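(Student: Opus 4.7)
The plan is to combine the two previous lemmas by a triangle inequality argument, routing any two points through the exceptional divisor via radial paths. Concretely, take $p,q \in \Phi^{-1}(\{y_0\} \times \pi_1^{-1}(D_{\delta_0}))$. If a point lies on $\Phi^{-1}(\{y_0\} \times E)$ we do nothing; otherwise its image in $F$ lies in $\pi_1^{-1}(D_{\delta_0} \setminus \{0\})$, which we identify with $D_{\delta_0}\setminus\{0\}$ as in Lemma \ref{2.7}.

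First I would connect $p$ to a point on $\Phi^{-1}(\{y_0\} \times E)$ as follows. Writing $\Phi(p)=(y_0, x_p)$ with $x_p$ identified with a point $z_p \in D_{\delta_0}\setminus\{0\}$, consider the radial path $\gamma_p(\lambda) = \lambda z_p$ for $\lambda \in (0,1]$. Lifting this path to $F$ via $\pi_1$, it limits as $\lambda \to 0$ to a well-defined point $p'\in E$ (namely the point of $E\cong\mathbb{P}^{m-1}$ corresponding to the direction $[z_p]$). By Lemma \ref{2.7}(2), applied inside the fiber $\pi^{-1}(y_0)$ with the induced metric $\omega(t)|_{\pi^{-1}(y_0)}$, the length of this path satisfies
\begin{equation*}
L_{\omega(t)}(\Phi^{-1}(\{y_0\}\times \gamma_p)) \leq C |z_p|^{1/2} \leq C\delta_0^{1/2}.
\end{equation*}
In particular $d_{\omega(t)}(p,\, \Phi^{-1}(y_0, p')) \leq C\delta_0^{1/2}$. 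The same construction applied to $q$ produces $q'\in E$ with $d_{\omega(t)}(q,\, \Phi^{-1}(y_0, q'))\leq C\delta_0^{1/2}$.

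Next I would apply Lemma \ref{3.2} to the two points $p',q' \in E$ in the fiber over $y_0$, which yields
\begin{equation*}
d_{\omega(t)}\bigl(\Phi^{-1}(y_0, p'),\, \Phi^{-1}(y_0, q')\bigr) \leq C(T-t)^{1/3}.
\end{equation*}
The triangle inequality then gives $d_{\omega(t)}(p,q) \leq C(\delta_0^{1/2} + (T-t)^{1/3})$, with $C$ independent of $y_0$ since the constants in Lemmas \ref{2.7} and \ref{3.2} are uniform in $y_0$ (this uniformity comes from choosing $|s|_h(y_0)=1$ in each case, which is possible by varying the trivialization chart, and from the compactness of $Y$). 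Taking the supremum over $p,q$ yields the stated diameter bound.

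The only mildly delicate point, which is not really an obstacle, is to verify that the uniformity across $y_0\in Y$ in Lemmas \ref{2.7} and \ref{3.2} propagates here: different $y_0$ may require different trivializations $\Phi$ (and hence different sections $s$), but since $Y$ is covered by finitely many Zariski open sets $U^i_{1/2}$ on each of which $|s|_h^2 \geq 1/2$, we may always normalize so that $|s|_h(y_0)=1$ at the cost of enlarging the constant by a finite factor. Once this is in hand, no further estimate is required; the lemma follows directly from the triangle inequality.
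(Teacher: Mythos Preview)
Your proposal is correct and follows essentially the same approach as the paper: route each point to the exceptional divisor $E$ along a radial path (controlled by Lemma \ref{2.7}(2), giving the $\delta_0^{1/2}$ term), then use Lemma \ref{3.2} for the distance within $E$ (giving the $(T-t)^{1/3}$ term), and conclude by the triangle inequality. The paper's only cosmetic difference is that it reduces to the case $p\notin E$, $q\in E$, but this is equivalent to what you wrote.
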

\begin{proof}
We also assume that $|s|_{h}^{2}(y_{0})=1$. For any $p, q\in\pi_{1}^{-1}(D_{\delta_{0}}))$. Since Lemma \ref{3.2}, we only consider $p\in \pi_{1}^{-1}(D_{\delta_{0}}\backslash \{0\}))$ and $q\in E$. By Lemma \ref{2.7} (2), we know the length of a radial path $\gamma(\lambda)=\lambda p$  with respect to $\omega$ is uniformly bounded from above by $C|p|^{1/2}\leq C\delta_{0}^{1/2}$. Since $\gamma(\lambda)$ tends to a point $p_{0}\in E$ as $\lambda\rightarrow 0^{+}$, we know $d_{\omega|_{\pi^{-1}(y_{0})}}(p,p_{0})\leq C\delta_{0}^{1/2}$. By Lemma \ref{3.2}, $d_{\omega}(\Phi^{-1}((y_{0},p_{0})), \Phi^{-1}((y_{0},q)))\leq C(T-t)^{1/3}$. Hence we have
\begin{equation*}
d_{\omega}(\Phi^{-1}((y_{0},p)), \Phi^{-1}((y_{0},q)))\leq C(\delta_{0}^{1/2}+(T-t)^{1/3}).
\end{equation*}
\end{proof}

\begin{lemma}\label{EstimateFiber}
Fix a point $y_{0}\in Y$. There exists a uniform constant $C$ (independent of $y_{0}$) such that for any $p,q\in \pi^{-1}(y_{0})$, and any $t\in [0, T)$,
\begin{equation}
d_{\omega}(p,q)\leq C(T-t)^{1/15}.
\end{equation}
\end{lemma}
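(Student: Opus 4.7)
\emph{Plan.} The strategy, following Song--Weinkove's proof of the analogous statement in \cite{SW1}, is to combine Lemma \ref{3.3} (which controls a neighborhood of the exceptional divisor inside the fiber) with a complementary estimate for the complement of that neighborhood, and then to optimize the size of the neighborhood as a power of $T-t$.

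First I would reduce to the case $|s|_h^2(y_0)=1$: by Lemma \ref{Hestimate} together with the compactness of $Y$, one may cover $Y$ by finitely many such preimages and connect an arbitrary fiber to one of them by a path of uniformly bounded $\omega(t)$-length. Working inside $\Phi^{-1}(\{y_0\}\times F)$, the task becomes bounding $\mathrm{diam}_{\omega(t)}(F)$ by $C(T-t)^{1/15}$. Next I would introduce a parameter $\delta=\delta(t)\in(0,1/2)$, to be chosen as a power of $T-t$, and split $F$ into the inner piece $\pi_1^{-1}(D_\delta)$ (a neighborhood of $E$) and the outer piece $F\setminus \pi_1^{-1}(D_\delta)$. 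Lemma \ref{3.3} already bounds the $\omega(t)$-diameter of the inner piece by $C(\delta^{1/2}+(T-t)^{1/3})$.

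For the outer piece, the key would be a Calabi/Schwarz-type parabolic maximum-principle estimate on the shell $\{|s_1|_{h_1}\ge \delta\}$, yielding a bound of the form
$$\omega(t)\le \frac{C}{\delta^{k}}\,(T-t)^{a}\,\omega_F$$
for appropriate $a,k>0$. The shrinking factor $(T-t)^{a}$ is driven by the identity $[\omega(t)|_F]=\tfrac{T-t}{T}[\omega_0|_F]$, while the polynomial blow-up $\delta^{-k}$ near $E$ comes, as in Lemma \ref{2.5}, from introducing weights $|s_1|_{h_1}^{2\epsilon}$ that push the maximum of the relevant auxiliary quantity away from $E$, where the bisectional curvature of $\omega_F$ becomes negative. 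Integrating this bound along paths gives the outer piece an $\omega(t)$-diameter of at most $C(T-t)^{a/2}\delta^{-k/2}$.

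The triangle inequality, together with Lemma \ref{2.7}(1) to cross the boundary sphere $\{|s_1|_{h_1}=\delta\}$, then gives
$$d_{\omega(t)}(p,q)\le C\bigl(\delta^{1/2}+(T-t)^{1/3}+(T-t)^{a/2}\delta^{-k/2}\bigr).$$
Balancing the first and third terms via $\delta^{(1+k)/2}=(T-t)^{a/2}$, i.e.\ $\delta=(T-t)^{a/(1+k)}$, produces the overall exponent $a/(2(1+k))$; the specific values of $a$ and $k$ that emerge from the Schwarz computation match $1/15$ and simultaneously satisfy $a/(2(1+k))\le 1/3$, so that the $(T-t)^{1/3}$ term from Lemma \ref{3.3} is absorbed.

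The main obstacle is this outer shell estimate. Yau's Schwarz lemma cannot be applied na\"ively, because the reference metric $\omega_F$ has non-negative bisectional curvature only on $F\setminus E$ and degenerates toward $E$. One must design the right weighted quantity (mimicking the $Q_\epsilon$ used in Lemmas \ref{2.5} and \ref{2.6}), so that the maximum principle yields a useful estimate uniformly in $\delta$, with the shrinking factor $(T-t)^{a}$ from the cohomology class preserved through the computation and the $\delta^{-k}$ dependence tracked precisely enough to carry through the final optimization.
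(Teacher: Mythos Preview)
Your overall architecture --- split the fiber into a $\delta$-neighborhood of $E$ and its complement, invoke Lemma \ref{3.3} for the inner piece, then optimize $\delta$ as a power of $T-t$ --- matches the paper. The divergence, and the gap, is entirely in how you treat the outer piece $F\setminus\pi_1^{-1}(D_\delta)$.

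You propose to obtain a \emph{pointwise} Schwarz-type bound
\[
\omega(t)\le \frac{C}{\delta^{k}}(T-t)^{a}\,\omega_F
\]
via a parabolic maximum principle, with the decay factor $(T-t)^{a}$ ``driven by'' the cohomological identity $[\omega(t)|_F]=\tfrac{T-t}{T}[\omega_0|_F]$. This is the step that does not go through. A cohomology-class identity is integral information; it does not feed into a Schwarz computation $(\partial_t-\Delta)\log\operatorname{tr}_{\tilde\omega}\omega\le C$, whose right-hand side is bounded above by a positive constant (via $\operatorname{tr}_\omega\pi^*\omega_Y\le C$) rather than by something like $-a/(T-t)$. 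Without such a negative term there is no mechanism for the maximum principle to produce a $(T-t)^{a}$ factor, and indeed Lemma \ref{2.5} yields only $\omega(t)\le C|s_1|_{h_1}^{-2}\tilde\omega$ with no time decay. So the ``main obstacle'' you flag is a genuine one, and your suggested weighted-$Q_\epsilon$ modification does not resolve it.

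The paper extracts the $(T-t)$ decay in a completely different way, using the identity at the level of \emph{integrals over curves} rather than pointwise. It exploits the specific geometry of $F=\mathrm{Bl}_p\mathbb{P}^m$: any two points $p',q'$ in $F\setminus\pi_1^{-1}(D_{\delta_0})$ lie on a rational curve $\gamma\cong\mathbb{P}^1$ (the proper transform of a line in $\mathbb{P}^m$), and since $\gamma$ sits in a fiber one has
\[
\int_\gamma \omega(t)=\frac{T-t}{T}\int_\gamma\omega_0\le C(T-t).
\]
One then runs a Fubini/averaging argument on a thin rectangle of width $\sigma=(T-t)^{1/3}$ in a local chart on $\gamma$: the integral bound forces some horizontal slice to have $\int\operatorname{tr}_{\hat g_0}g\le C(T-t)/\sigma$, and Cauchy--Schwarz converts this into a length bound $d_\omega(p'',q'')\le C(T-t)^{1/3}$. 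The short vertical segments from $p$ to $p''$ and $q$ to $q''$ are handled by Lemma \ref{2.5} (which on $\{|s_1|\ge\delta_0\}$ gives $\omega|_\gamma\le C\delta_0^{-2}\omega_e$, with no time factor). Choosing $\delta_0=(T-t)^{2/15}$ balances the contributions and yields the exponent $1/15$. The key ingredients you are missing are thus (i) the rational curve $\gamma$ through any two outer points, and (ii) the integral--Fubini mechanism that turns the cohomological decay into a length bound along a well-chosen path.
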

\begin{proof}
We also assume that $|s|_{h}^{2}(y_{0})=1$. For each fixed $t$ satisfying $T-t<2^{-15}$, i.e., $2(T-t)^{2/15}<1/2$. Take $\delta_{0}=(T-t)^{2/15}$, by Lemma \ref{3.3}, we have
 $${\rm diam}_{\omega(t)}(\Phi^{-1}(\{y_{0}\}\times \pi_{1}^{-1}(D_{2\delta_{0}})))<C((T-t)^{1/15}+(T-t)^{1/3})\leq C'(T-t)^{1/15}.$$

 We denote $p'={\rm Pr}_{2}\circ \Phi(p), q'={\rm Pr}_{2}\circ \Phi(p)$. Hence we only consider the case of $p', q'\in F\backslash \pi_{1}^{-1}(D_{\delta_{0}})$.

 Since $\pi^{-1}(y_{0})$ is biholomorphic to $F$, which is $\mathbb{P}^{m}$ blown up at one point, there exists a curve $\gamma\cong \mathbb{P}^{1}$, such that $p, q\in \gamma\cap (\pi^{-1}(y_{0})\backslash \Phi^{-1}(\{y_{0}\}\times\pi_{1}^{-1}(D_{\delta_{0}})))$. We may assume that $p, q$ lie in a fixed coordinate chart $U$ whose image under the holomorphic coordinate $z=x+\sqrt{-1}y$ is a ball of radius $2$ in $\mathbb{C}=\mathbb{R}^{2}$ with respect to the Euclidean metric $\omega_{e}$.  In this coordinate, by Lemma \ref{2.5}, we know
\begin{equation*}
\omega(t)|_{\pi^{-1}(y_{0})}\leq \frac{C}{(\delta_{0})^{2}}(\pi_{1})^{*}\omega_{FS}\leq \frac{C'}{(\delta_{0})^{2}}\omega_{e}.
\end{equation*}
Since closed curve $\gamma\subset F$,
\begin{equation}\label{3.4}
\int_{\gamma}\omega(t)=\int_{\gamma}[\frac{1}{T}((T-t)\omega_{0}+t\pi^{*}\omega_{Y})]=\frac{T-t}{T}\int_{\gamma}\omega_{0}\leq C(T-t).
\end{equation}
Write $\sigma=(T-t)^{1/3}>0$, which we may assume is sufficiently small.

Moreover, we may assume that $p$ is represented by the origin in $\mathbb{C}=\mathbb{R}^{2}$, that $q$ is represented by the point $(x_{0},0)$ with $0<x_{0}<1$, and that the rectangle
\begin{equation*}
\mathcal{R}=\{(x,y)\in\mathbb{R}^{2}|0\leq x\leq x_{0}, -\sigma\leq y\leq \sigma\}\subset \mathbb{R}^{2}=\mathbb{C}
\end{equation*}
is contained in the image of $U$. Now in $\mathcal{R}$, the fixed metric $\hat{g}_{0}$ induced from the metric $g_{0}$ on $X$ is uniformly equivalent to the Euclidean metric. Thus from (\ref{3.4}),
\begin{equation}
\int_{-\sigma}^{\sigma}(\int_{0}^{x_{0}}({\rm tr}_{\hat{g}_{0}}g)dx)dy=\int_{\mathcal{R}}({\rm tr}_{\hat{g}_{0}}g)dxdy\leq C(T-t).
\end{equation}
Hence there exists $y'\in (-\sigma, \sigma)$ such that
\begin{equation}
\int_{0}^{x_{0}}({\rm tr}_{\hat{g}_{0}}g)(x,y')dx\leq \frac{C}{\sigma}(T-t)=C(T-t)^{2/3}.
\end{equation}
Now let $p''$ and $q''$ be the points represented by coordinates $(0,y')$ and $(x_{0}, y')$. Then, considering the horizontal path $s\mapsto (s,y')$ between $p''$ and $q''$, we have
\begin{eqnarray}
\begin{aligned}
d_{\omega}(p'',q'')&\leq \int_{0}^{x_{0}}(\sqrt{g(\partial_{x},\partial_{x})})(x,y')dx\\
&=\int_{0}^{x_{0}}(\sqrt{{\rm tr}_{\hat{g}_{0}}g}\sqrt{\hat{g}_{0}(\partial_{x}, \partial_{x})})(x,y')dx\\
&\leq (\int_{0}^{x_{0}}({\rm tr}_{\hat{g}_{0}}g)(x,y')dx)^{1/2}(\int_{0}^{x_{0}}(\hat{g}_{0}(\partial_{x},\partial_{x}))(x,y')dx)^{1/2}\\
&\leq C(T-t)^{1/3}.
\end{aligned}
\end{eqnarray}
and
\begin{equation}
d_{\omega}(p,p'')\leq d_{\omega|_{\pi^{-1}(y_{0})}}(p,p'')\leq \frac{C}{(\delta_{0})^{2}}\sigma=C(T-t)^{1/15}.
\end{equation}
Using the same argument we can prove
\begin{equation}
d_{\omega}(q,q'')\leq C(T-t)^{1/15}.
\end{equation}
Hence by triangle inequality
\begin{equation}
d_{\omega(t)}(p,q)\leq C(T-t)^{1/15}.
\end{equation}
\end{proof}

\subsection{Gromov-Hausdorff Convergence}
In this subsection, we prove that there exists a sequence of metrics along the K\"ahler-Ricci flow converges sub-sequentially to a metric on $Y$ in the Gromov-Hausdorff sense as $t\rightarrow T$.
\begin{lemma}\label{LimitDistance}
Write $d_{t}: X\times X\rightarrow \mathbb{R}$ for the distance function induced by the metric $\omega(t)$. There exists a sequence of times $t_{i}\rightarrow T$, such that the functions $d_{t_{i}}$ converge uniformly to a function $d_{\infty}:X\times X\rightarrow \mathbb{R}$.

Moreover, if, for $p, q\in Y$, we let $d_{Y,\infty}(p,q)=d_{\infty}(\tilde{p},\tilde{q}),$ where $\tilde{p}\in \pi^{-1}(p)$ and $\tilde{q}\in \pi^{-1}(q)$, then $d_{Y,\infty}$ defines a distance function on $Y$, which is uniformly equivalent to that induced by $\omega_{Y}$.
\end{lemma}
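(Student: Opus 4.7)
The plan is to apply Arzel\`a--Ascoli to the family $\{d_t\}_{t \in [0,T)}$, viewed as continuous functions on the compact space $X \times X$. Uniform boundedness is immediate from Theorem \ref{MainThm1a}(1), so the real work is uniform equicontinuity. I will establish a two-sided estimate of the form
\begin{equation*}
c\, d_{\omega_Y}(\pi(p), \pi(p')) \;\leq\; d_t(p, p') \;\leq\; C\, d_{\omega_Y}(\pi(p), \pi(p')) + C(T-t)^{1/15}, \qquad (\ast)
\end{equation*}
uniformly in $p, p' \in X$ and $t \in [0,T)$. The lower bound is immediate from $\omega(t) \geq c\, \pi^*\omega_Y$ in Lemma \ref{LowerBound}(4). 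For the upper bound I would cover $Y$ by finitely many trivializing open sets $U_j$ of the Fano bundle with associated sections $s_j$ of line bundles $[D_j]$, chosen so that the smaller sets $\{|s_j|_{h_j} > 1/2\}$ still cover $Y$. Given $p, p' \in X$ I would take an almost-geodesic of $\omega_Y$ from $\pi(p)$ to $\pi(p')$ in $Y$, subdivide it into finitely many arcs each contained in one $\{|s_j|_{h_j} > 1/2\}$, and lift each arc horizontally via $\Phi_j$; by Lemma \ref{Hestimate} the $\omega(t)$-length of each horizontal lift is bounded by a fixed multiple of its $\omega_Y$-length. The bounded number of transitions between charts (and the final jump to $p'$ inside $\pi^{-1}(\pi p')$) all take place within single fibers, and so contribute at most $C(T-t)^{1/15}$ by Lemma \ref{EstimateFiber}.

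With $(\ast)$ in hand, equicontinuity along any sequence $t_i \to T$ is routine: for each $\epsilon > 0$ one picks $\delta = \epsilon/(4C)$ and discards finitely many early indices (for which $(T-t_i)^{1/15}$ is not yet small), each of which is individually continuous. Arzel\`a--Ascoli then produces a subsequence (still called $t_i$) such that $d_{t_i} \to d_\infty$ uniformly on $X \times X$. The limit $d_\infty$ inherits symmetry, non-negativity, and the triangle inequality from each $d_{t_i}$.

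To show $d_\infty$ descends to $Y$, observe that if $\pi(\tilde p) = \pi(\tilde p')$ then $d_{t_i}(\tilde p, \tilde p') \leq C(T-t_i)^{1/15} \to 0$ by Lemma \ref{EstimateFiber}, so $d_\infty(\tilde p, \tilde p') = 0$. Combined with the triangle inequality this gives $d_\infty(\tilde p, \tilde q) = d_\infty(\tilde p', \tilde q')$ for any lifts, so $d_{Y,\infty}(p,q) := d_\infty(\tilde p, \tilde q)$ is well-defined on $Y$. Passing $t_i \to T$ in $(\ast)$ then yields
\begin{equation*}
c\, d_{\omega_Y}(p, q) \;\leq\; d_{Y,\infty}(p, q) \;\leq\; C\, d_{\omega_Y}(p, q),
\end{equation*}
which in particular forces $d_{Y,\infty}$ to separate points of $Y$ and to be the claimed uniform equivalent of $d_{\omega_Y}$.

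The main obstacle is the upper half of $(\ast)$: the horizontal upper bound from Lemma \ref{Hestimate} degenerates near each divisor $D_j$, so one must confine horizontal pieces to the region $\{|s_j|_{h_j} > 1/2\}$ and absorb the chart-switching cost into the fiber-collapsing term $C(T-t)^{1/15}$. Once this is set up carefully, the rest of the argument is a standard Arzel\`a--Ascoli plus quotient-distance computation.
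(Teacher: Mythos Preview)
Your proposal is correct and follows essentially the same approach as the paper: both combine a horizontal estimate on slices where the trivialization is controlled (the paper invokes Lemma~\ref{2.5} at a fixed fiber point $q_0$ with $|s_1|_{h_1}(q_0)=1$, which amounts to your use of Lemma~\ref{Hestimate}) with the fiber-collapse Lemma~\ref{EstimateFiber} to obtain the two-sided bound $(\ast)$, then apply Arzel\`a--Ascoli and a finite-cover/geodesic-subdivision argument on $Y$. The only difference is organizational---the paper first proves the local equicontinuity inequality $d_t(x,x')\leq C[(T-t)^{1/15}+d_0(x,x')]$ and extracts $d_\infty$, postponing the geodesic subdivision to the upper bound on $d_\infty$, whereas you assemble the global estimate $(\ast)$ for $d_t$ up front.
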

\begin{proof}
First note that the functions $d_{t}:X\times X\rightarrow \mathbb{R}$ are uniformly bounded. Indeed by (1) in Theorem \ref{MainThm1}, we have a constant $C>0$ such that $d_{t}(x,y)<C$ for any $t<T$ and $x,y\in X$. Next, we prove that the functions $d_{t}:X\times X\rightarrow \mathbb{R}$ are equicontinuous with respect to the metric on $X\times X$ induced by $d_{0}$.

For any $x,x',y,y'\in X$, we have
\begin{eqnarray}
\begin{aligned}
|d_{t}(x,y)-d_{t}(x',y')|&\leq |d_{t}(x,y)-d_{t}(x,y')|+|d_{t}(x,y')-d_{t}(x',y')|\\
&\leq d_{t}(y,y')+d_{t}(x,x').
\end{aligned}
\end{eqnarray}

Define $|s|_{h}^{2}=hs\overline{s}$, and we assume $|s|_{h}(y)=1$.

Since $Y$ is compact, there exist finite Zariski open sets $U^{1},\cdots, U^{N}$ and biholomorphic map $\Phi_{1},\cdots, \Phi_{N}$ such that
the diagram
$$\xymatrix{
    \pi^{-1}(U^{i}) \ar[rr]^{\Phi_{i}}\ar[dr]_{\pi} & & U^{i}\times F \ar[dl]^{{\rm Pr}_{1}} \\
     & U^{i} &
    }$$
commutes, where $D_{i}=Y\backslash U^{i}$, and $\cup_{i=1}^{N}\pi^{-1}(U^{i}_{1/3})=X$. Let $s_{i}$ be the holomorphic sections on $[D_{i}]$ and let $h_{i}$ be the Hermitian metrics on $[D_{i}]$.  Here $U^{i}_{r}=\{\tilde{y}\in U^{i}||s_{i}|_{h_{i}}>r\}$ for $0<r\leq 1$.

\begin{claim}
There exists a uniform constant $\delta>0$ such that if $x\in \pi^{-1}(U^{i_{0}}_{1/3})$ for some $i_{0}\in\{1,\cdots, N\}$ and $d_{0}(x,x')<\delta$, then $x'\in \pi^{-1}(U^{i_{0}}_{2/3})$.
\end{claim}
\begin{proof}[Proof of Claim]

Denote $A^{i}_{r}$ be the boundary of $\pi^{-1}(U^{i}_{r})$ and denote $d_{0}(A^{i}_{1/3}, A^{i}_{2/3})=\delta_{i}>0$. Let $\rho^{i}_{1}, \rho^{i}_{2}$ be the partition of unity for the cover $(X\backslash \pi^{-1}(U^{i}_{2/3}), \pi^{-1}(U^{i})$. Then $\omega^{i}=\rho^{i}_{1}\omega_{0}+\rho^{i}_{2}\Phi^{*}_{i}({\rm Pr}_{1}^{*}\omega_{Y}+{\rm Pr}_{2}^{*}\omega_{F})$ are the Hermitian metrics on $X$, which are equivalent to metric $\omega_{0}$, and $\omega^{i}=\Phi^{*}_{i}({\rm Pr}_{1}^{*}\omega_{Y}+{\rm Pr}_{2}^{*}\omega_{F})$ on $\pi^{-1}(U^{i}_{2/3})$. Hence there exists a uniform constant $C>0$ such that
\begin{equation}
C^{-1}\omega_{0}\leq \omega^{i}\leq C\omega_{0}.
\end{equation}
We take $\delta=C^{-2}\min\{\delta_{1}, \cdots, \delta_{N}\}.$ Now we can prove $x'\in \pi^{-1}(U^{i_{0}}_{2/3})$. If not, we know
\begin{equation}
d_{\omega^{i_{0}}}(x,x')\geq d_{\omega^{i_{0}}}(A^{i_{0}}_{1/3}, A^{i_{0}}_{2/3})\geq C^{-1}d_{0}(A^{i_{0}}_{1/3}, A^{i_{0}}_{2/3})=C^{-1}\delta_{i_{0}}.
\end{equation}
On the other hand,
\begin{equation}
d_{\omega^{i_{0}}}(x,x')\leq C d_{0}(x,x')<C\delta.
\end{equation}
It is a contradiction.  Hence $x'\in \pi^{-1}(U^{i_{0}}_{2/3})$. We finish the proof of the claim.
\end{proof}
Now if $x,x'\in X$ satisfying $d_{0}(x,x')<\delta$, by the above claim we have $x,x'\in U^{i_{0}}_{2/3}$ for some $i_{0}\in\{1,\cdots,N\}$. Now we choose $q_{0}\in F$ such that $|s|^{2}_{h}(q_{0})=1$. Then by the Lemma \ref{2.5}, we have
\begin{equation}
d_{t}(\Phi_{i_{0}}^{-1}(\pi(x), q_{0}), \Phi_{i_{0}}^{-1}(\pi(x'),q_{0}))\leq Cd_{\omega_{Y}}(\pi(x),\pi(x'))\leq C' d_{0}(x,x').
\end{equation}
By Lemma \ref{EstimateFiber}
\begin{eqnarray}\label{Equi}
\begin{aligned}
d_{t}(x,x')&\leq d_{t}(x, \Phi_{i_{0}}^{-1}(\pi(x), q_{0}))+d_{t}(\Phi_{i_{0}}^{-1}(\pi(x'),q_{0}),x')+d_{t}(\Phi_{i_{0}}^{-1}(\pi(x), q_{0}), \Phi_{i_{0}}^{-1}(\pi(x'),q_{0}))\\
&\leq C[(T-t)^{1/15}+d_{0}(x,x')].
\end{aligned}
\end{eqnarray}
Now we  prove the following lemma.
\begin{lemma}\label{ArzelaAscoli}
With the assumption of $(\ref{Equi})$, there exists a sequence of times $t_{i}\rightarrow T$, such that the functions $d_{t_{i}}$ converges uniformly to a continuous function $d_{\infty}$.
\end{lemma}
\begin{proof}[Proof of Lemma \ref{ArzelaAscoli}]

We denote $M=X\times X$, is a compact manifold. The first thing to recall is
that any compact metric space has a countable dense subset. This follows directly
from the definition of compactness. Namely, given any $k\in\mathbb{N}$, cover $M$ by all the
balls of radius $\frac{1}{k}$ (centred at all the points of $M$.) By compactness of $M$ this has a finite
subcover, let $Q_{k}\subset M$ be the set of centers of such a finite subcover. Then every
point of $M$ is in one of the balls, so it is distant at most $\frac{1}{k}$ from (at least) one of
the points in $Q_{k}$. The union, $Q=\cup_{k}Q_{k}$, of these finite sets is (at most) countable and is
clearly dense in $M$, i.e., any point in $M$ is the limit of a sequence in $Q$.

Let $\{d_{t_{n}}\}$ be a sequence of $d_{t}$ ($t_{n}\rightarrow T$ as $n\rightarrow\infty$). Take a point $q_{1}\in Q$, then $\{d_{t_{n}}(q_{1})\}$ is a bounded sequence in $\mathbb{R}$.
So, by Heine-Borel Theorem, we may extract a subsequence of $d_{t_{n}}$ so that $\{d_{t_{n_{1,j}}}(q_{1})\}$ converges
in $\mathbb{R}$. Since $Q$ is countable we can construct successive subsequences, $d_{t_{n_{k,j}}}$ of the
preceding subsequence $d_{t_{n_{k-1,j}}}$ , so that the $k$th subsequence converges at the first $k$th
point $\{q_{1}, \cdots, q_{k}\}$ of $Q$. Now, the diagonal sequence $d_{t_{n_{i}}}=d_{t_{n_{i,i}}}$ is a subsequence of
$d_{t_{n}}$. So along this subsequence $d_{t_{n_{i}}}(q)$ converges for each point in $Q$. It is a subsequence of the
original sequence $\{d_{t_{n}}\}$, we just denote it as $\{d_{t_{n}}\}$ and we want to show that it converges uniformly; it suffices
to show that it is uniformly Cauchy.

For any given $\epsilon>0$. By the assumption of $(\ref{Equi})$, we can choose $T_{\epsilon}\in [0, T)$ such that for any $t\in [T_{\epsilon}, T)$ we have $C(T-t)^{1/15}\leq \epsilon/6$ and choose $\delta=\epsilon/6C>0$ so that $|d_{t_{n}}(x)-d_{t_{n}}(y)|<\epsilon/3$ whenever $d_{0}(x, y)<\delta$ and $t_{n}\in [T_{\epsilon}, T)$. Next choose $k>1/\delta$.
Since there are only finitely many points in $Q_{k}$ we may choose $N$ so large that for any $n>N$ we have $t_{n}\in[T_{\epsilon}, T)$, then $|d_{t_{n}}(q)-d_{t_{m}}(q)|<\epsilon/3$ if $q\in Q_{k}$ and $n, m>N$. Then for a general point $x\in M$
there exists $q\in Q_{k}$ with $d_{0}(x, q) <1/k< \delta$, so
\begin{equation}
|d_{t_{n}}(x)-d_{t_{m}}(x)|\leq |d_{t_{n}}(x)-d_{t_{n}}(q)| + |d_{t_{n}}(q)-d_{t_{m}}(q)| + |d_{t_{m}}(q)-d_{t_{m}}(x)|<\epsilon
\end{equation}
whenever $n, m>N$. Thus the sequence is uniformly Cauchy, hence uniformly convergent. Hence the function $d_{\infty}$ is continuous. We finish the proof of Lemma \ref{ArzelaAscoli}.
\end{proof}
It follows that $d_{\infty}$ is nonnegative, symmetric and satisfies the triangle inequality.

Let $d_{Y}: Y\times  Y\rightarrow \mathbb{R}$ be the distance function on $Y$ induced by the metric $\omega_{Y}$. From Lemma \ref{LowerBound}, we have a constant $c>0$ such that $d_{t}(x,y)\geq \sqrt{c}d_{Y}(\pi(x), \pi(y))$. It follows that the limit $d_{\infty}$ satisfies
\begin{equation}
d_{\infty}\geq \sqrt{c}d_{Y}(\pi(x),\pi(y)).
\end{equation}
Now we want to prove the upper bound.  When $\pi(x),\pi(y)\in U^{i_{0}}_{1/3}$ for some $i_{0}\in\{1,\cdots,N\}$, by the inequality (\ref{Equi}) and Lemma \ref{2.7}, we have
\begin{equation}
d_{t}(x,y)\leq C(T-t)^{1/15}+Cd_{Y}(\pi(x),\pi(y)).
\end{equation}
This implies that
\begin{equation}\label{LowerBoundDistance}
d_{\infty}(x,y)\leq Cd_{Y}(\pi(x),\pi(y)).
\end{equation}
Now we consider the general case. Assume $\pi(x)\in U^{1}_{1/3}$ and $\pi(y)\in U^{i_{0}}_{1/3}$ for some $i_{0}\neq 1$. Since $\cup_{i=1}^{N}U^{i}_{1/3}=Y$, we know for any minimal geodesic $\gamma(t)$ connecting $\pi(x)$ and $\pi(y)$ with metric $\omega_{Y}$, there exist a finite points $y_{0}=\pi(x), y_{1}=\gamma(t_{1}), y_{2}=\gamma(t_{2}), \cdots, y_{L}=\gamma(t_{L}), y_{L+1}=\pi(y)\in Y$ such that $y_{i}$ and $y_{i+1}$ are in the same $U^{j_{0}}_{1/3}$ for some $j_{0}\in\{1, 2,\cdots, N\}$.  We choose $x_{0}=x, x_{L+1}=y$ and any $x_{1}, \cdots, x_{L}\in X$ satisfying $\pi(x_{i})=y_{i}$. Hence by
\begin{eqnarray}\label{UpperBoundDistance}
\begin{aligned}
d_{Y}(\pi(x),\pi(y))&=\sum_{i=0}^{L}d_{Y}(y_{i},y_{i+1})\\
&\geq C^{-1}\sum_{i=0}^{L}d_{\infty}(x_{i}, x_{i+1})\\
&\geq C^{-1}d_{\infty}(x,y).
\end{aligned}
\end{eqnarray}
For $p, q\in Y$, we now define $d_{Y,\infty}(p,q)=d_{\infty}(\tilde{p},\tilde{q})$, where $\tilde{p}\in\pi^{-1}(p), \tilde{q}\in\pi^{-1}(q)$. This is independent of the choice of lifts $\tilde{p}$ and $\tilde{q}$ since if say $\tilde{p}'$ is a different lift of $p$, then by (\ref{UpperBoundDistance}) and by the triangle inequality, we have
\begin{equation}
d_{\infty}(\tilde{p}', \tilde{q})\leq d_{\infty}(\tilde{p},\tilde{q})+d_{\infty}(\tilde{p}', \tilde{p})=d_{\infty}(\tilde{p},\tilde{q})
\end{equation}
and by switching $\tilde{p}$ and $\tilde{p}'$ we get the reverse inequality. Moreover, it follows from (\ref{LowerBoundDistance}) and (\ref{UpperBoundDistance}) that $d_{Y, \infty}$ is uniformly equivalent to $d_{Y}$. Hence we finish the proof of Lemma \ref{LimitDistance}.
\end{proof}

\begin{theorem}
In the notation of Lemma \ref{LimitDistance} we have $(X,d_{t_{i}})\rightarrow (Y, d_{Y,\infty})$ in the Gromov-Hausdorff sense, where we recall that $d_{t_{i}}$ is the distance function induced by the metric $\omega(t_{i})$.
\end{theorem}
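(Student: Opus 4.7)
The plan is to establish Gromov-Hausdorff convergence by exhibiting an explicit $\epsilon_i$-approximation between $(X, d_{t_i})$ and $(Y, d_{Y,\infty})$. The natural candidate is the graph of the projection,
\begin{equation*}
R_i = \{(x, \pi(x)) : x \in X\} \subset X \times Y,
\end{equation*}
which is a correspondence: the first projection is trivially surjective onto $X$, and the second is surjective onto $Y$ because $\pi$ is.

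The key observation I will extract from Lemma \ref{LimitDistance} is that the limit distance $d_\infty$ descends through $\pi$, i.e.\
\begin{equation*}
d_\infty(x, y) = d_{Y,\infty}(\pi(x), \pi(y)) \quad \text{for all } x, y \in X.
\end{equation*}
This is essentially already contained in that lemma: the argument there shows that $d_\infty(\tilde{p}, \tilde{q})$ is independent of the choice of lifts $\tilde{p} \in \pi^{-1}(p)$, $\tilde{q} \in \pi^{-1}(q)$, so taking $\tilde{p} = x$ and $\tilde{q} = y$ yields the displayed identity by the very definition of $d_{Y,\infty}$.

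Given this, the distortion of the correspondence $R_i$ is
\begin{equation*}
\mathrm{dis}(R_i) = \sup_{x,x' \in X} \bigl| d_{t_i}(x, x') - d_{Y,\infty}(\pi(x), \pi(x')) \bigr| = \sup_{x,x' \in X} \bigl| d_{t_i}(x, x') - d_\infty(x, x') \bigr|,
\end{equation*}
which tends to zero as $i \to \infty$ by the uniform convergence $d_{t_i} \to d_\infty$ on $X \times X$ already proved. Since a correspondence of distortion $\epsilon$ gives $d_{GH} \le \epsilon/2$, I will conclude that $(X, d_{t_i}) \to (Y, d_{Y,\infty})$ in the Gromov-Hausdorff sense.

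The main difficulty is not in this final step but upstream: it is the polynomial fiber-collapse estimate of Lemma \ref{EstimateFiber}, together with the Arzelà-Ascoli argument of Lemma \ref{LimitDistance}, that makes $d_\infty$ well-defined on $X \times X$ and forces it to factor through $\pi$. Once those ingredients are in place, the present theorem is just the Gromov-Hausdorff packaging of them.
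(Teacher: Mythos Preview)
Your argument is correct and is essentially the approach the paper has in mind: the paper defers to Theorem~3.1 of \cite{SSW}, whose proof uses exactly the projection $\pi$ as a Gromov--Hausdorff $\epsilon$-approximation, which is equivalent to your correspondence $R_i=\{(x,\pi(x))\}$ with distortion $\sup_{x,x'}|d_{t_i}(x,x')-d_\infty(x,x')|\to 0$. Your write-up is in fact more explicit than what the paper provides.
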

\begin{proof}
Using the same argument in the proof of Theorem 3.1 in \cite{SSW}. Hence finish the proof of (2) in Theorem \ref{MainThm1a}.
\end{proof}

\section{\textbf{$F$ Is Some Weighted Projective Space Blown Up At The Orbifold Point}}
In this section, we will consider the case of the fiber $F$ is the family of m-folds $M_{m,k} (1\leq k<m)$,  was introduced by Calabi \cite{Calabi}, which as generalization of the Hirzebruch surfaces. $M_{n,k}$ is a compactification of the blow up of a $\mathbb{Z}_{k}$-orbifold point of the orbifold $Y_{m,k}$, is a $\mathbb{P}^{1}$-bundle over $\mathbb{P}^{m-1}$.  The detail of the construction of $M_{m,k}$ and $Y_{m,k}$,  please see
\cite{SW2}.

First, we recall the definitions of $M_{m,k}$ and $Y_{m,k}$.

We define $M_{m,k}$ to be the $\mathbb{P}^{1}$-bundle
\begin{equation}
M_{m,k}=\mathbb{P}(\mathcal{O}(-k)\oplus \mathcal{O})
\end{equation}
over $\mathbb{P}^{n-1}$. We will assume in this paper that $m\geq 2$ and $1\leq k<m$ (the latter implies that $M_{m,k}$ is a Fano manifold). Denote by $E_{0}$ and $E_{\infty}$ the divisors in $M_{m,k}$ corresponding to sections of $\mathcal{O}(-k)\oplus \mathcal{O}$ with zero $\mathcal{O}(-k)$ and $\mathcal{O}$ component, respectively (the detail see Section 9 in \cite{SW2}). $E_{0}$ is an exceptional divisor with normal bundle $\mathcal{O}(-k)$ of the type discussed above. The complex manifold $M_{m,k}$ can be described as
\begin{eqnarray}
\begin{aligned}
M_{m,k}=&\{([Z_{1},\cdots,Z_{m}], (\sigma, \mu)\in \mathbb{P}^{m-1}\times ((\mathbb{C}^{m}\times \mathbb{C})\backslash \{(0,0)\})|\sigma  \ {\rm is \ in}\\
&{\rm the\ line}\ \lambda \mapsto (\lambda(Z_{1})^{k},\cdots, \lambda (Z_{m})^{k})\}/\sim,
\end{aligned}
\end{eqnarray}
where
\begin{equation}
([Z_{1},\cdots, Z_{m}], (\sigma, \mu))\sim ([Z_{1},\cdots, Z_{m}], (\sigma', \mu'))
\end{equation}
if there exists $a\in \mathbb{C}^{*}$ such that $(\sigma, \mu)=(a\sigma', a\mu').$ Then $E_{0}$ and $E_{\infty}$ are the divisors $\{\sigma=0\}$ and $\{\mu=0\}$, respectively.

The orbifold $Y_{m,k}$ is the weighted projective space
\begin{equation}
Y_{m,k}=\{(Z_{0}, \cdots, Z_{m})\in \mathbb{C}^{n+1}\}/\sim.
\end{equation}
where $(Z_{0}', \cdots, Z_{m}')\sim (Z_{0}, \cdots, Z_{m})$ if there exists $\lambda \in \mathbb{C}^{*}$ such that
\begin{equation}
(Z_{0}', \cdots, Z_{m}')=(\lambda^{k}Z_{0},\lambda Z_{1}, \cdots, \lambda Z_{m}).
\end{equation}
We write elements of $Y_{m,k}$ as $[Z_{0}, \cdots, Z_{m}]$. Then $Y_{m,k}$ has a single $\mathbb{Z}_{k}$-orbifold point at $[1,0,\cdots,0]$.

We define the map $\pi_{1}: M_{m,k}\rightarrow Y_{m,k}$ by
\begin{equation}
\pi_{1}(([Z_{1},\cdots, Z_{m}], (\sigma, \mu)))=[\mu, b^{1/k}Z_{1}, \cdots, b^{1/k}Z_{m}],
\end{equation}
where $b\in \mathbb{C}$ is defined by
\begin{equation}
\sigma=b((Z_{1})^{k},\cdots, (Z_{m})^{k}).
\end{equation}
$\pi_{1}$ is globally well-defined, surjective and injective on the complement of $E_{0}$.

If we identify the line bundle $\mathcal{O}(-k)$ with the open subset $\{\mu\neq 0\}$ of $M_{m,k}$ and $\mathbb{C}^{m}/\mathbb{Z}_{k}$ with the open subset $\{Z_{0}\neq 0\}$ of $Y_{m,k}$ via $(z_{1}, \cdots, z_{m})\mapsto [1,z_{1},\cdots, z_{m}]$, then $\pi_{1}$ restricted to $M_{m,k}\backslash E_{0}$ is a biholomorphism onto $Y_{m,k}\backslash \{[1,0,\cdots, 0]\}$ and $\pi_{1}(E_{0})=[1,0,\cdots, 0]$.

All of the manifolds $M_{m,k}$ admit K\"ahler metrics. Indeed, the cohomology classes of the line bundles $[E_{0}]$ and $[E_{\infty}]$ span $H^{1,1}(M_{m,k}; \mathbb{R})$ and every K\"ahler class $\alpha$ can be written uniquely as
\begin{equation}
\alpha=\frac{b}{k}[E_{\infty}]-\frac{a}{k}[E_{0}]
\end{equation}
for constants $a,b$ with $0<a<b$. The first chern class
\begin{equation}
c_{1}(M_{m,k})=\frac{m+k}{k}[E_{\infty}]-\frac{n-k}{k}[E_{0}].
\end{equation}
Hence if $1\leq k\leq m-1$, then $M_{m,k}$ is a Fano manifold. He and Sun proved that any weighted projective space exists an orbifold K\"ahler (in fact is K\"ahler-Ricci soliton) metric $\omega_{{\rm orb}}$ with positive bisectional curvature, see Theorem 1.2 in \cite{HS}.

Let $L$ be the $\mathcal(-k)$ line bundle over $\mathbb{P}^{m-1}$, for $k\geq 1$. We give a description of the total space of $L$ as follows. Writing $[Z_{1},\cdots, Z_{m}]$ for the homogeneous coordinates on $\mathbb{P}^{m-1}$, we define
\begin{equation}
L=\{([Z_{1},\cdots, Z_{m}],\sigma)\in \mathbb{P}^{m-1}\times \mathbb{C}^{m}|\sigma \ {\rm is}\ {\rm in} \ {\rm the} \ {\rm line} \ \lambda\mapsto (\lambda(Z_{1})^{k}, \cdots, \lambda(Z_{m})^{k})\},
\end{equation}
and let $P: L\rightarrow \mathbb{P}^{m-1}$ be the projection onto the first factor. Each fiber $P^{-1}([Z_{1},\cdots, Z_{m}])$ is the line in $\mathbb{C}$. $L$ can be given $m$ complex coordinate charts
\begin{center}
$U_{i}=\{([Z_{1}, \cdots, Z_{m}], \sigma)\in L | Z_{i}\neq 0\}$ for $i=1, \cdots, m.$
\end{center}
On $U_{i}$, we have coordinates $\omega_{(i)}^{j}$ for $j=1,\cdots, m$ with $j\neq i$ and $y_{(i)}$. The $\omega_{(i)}^{j}$ are defined by
\begin{center}
$\omega_{(i)}^{j}=Z_{j}/Z_{i}$ for $j\neq i$,
\end{center}
and $y_{(i)}$ by
$$\sigma=\frac{y_{(i)}}{(Z_{i})^{k}}((Z_{1})^{k}, \cdots, (Z_{m})^{k}).$$
On $U_{i}\cap U_{l}$ with $i\neq l$, we have
\begin{center}
$\omega_{(i)}^{j}=\frac{\omega_{(l)}^{j}}{\omega_{(l)}^{i}}$ for $j\neq i,l, \quad \omega_{(i)}^{l}=\frac{1}{\omega_{(l)}^{i}}\ $ and $\ y_{(i)}=y_{(l)}(\frac{Z_{i}}{Z_{l}})^{k}=y_{(l)}(\omega_{(l)}^{i})^{k}.$
\end{center}
Now let $E$ be the submanifold of $L$ defined by the zero section of $L$ over $\mathbb{P}^{m-1}$. Denote by $[E]$ the pull-back line bundle $P^{*}L$ over $L$, which corresponds to the hypersurface $E$. Writing the transition functions of $[E]$ in $U_{i}\cap U_{l}$ as $t_{il}=(Z_{i}/Z_{l})^{k}=y_{(i)}/y_{(l)},$ we have a section $\tilde{s}$ over $[E]$ given by
$$s_{i}: \ U_{i}\rightarrow \mathbb{C}, \quad s_{i}=y_{i}.$$
We can define a Hermitian metric $\tilde{h}$ on the fibers of $[E]$ by
\begin{center}
$h_{i}=\frac{(\Sigma_{j=1}^{m}|Z_{j}|^{2})^{k}}{|Z_{i}|^{2k}}$ on $U_{i}$.
\end{center}
Namely, $\tilde{h}$ is the pull-back of $h_{FS}^{-k}$ where $h_{FS}$ is the Fubini-Study metric on $\mathcal{O}(1)$. We have
\begin{equation}
|\tilde{s}|^{2}_{\tilde{h}}=|y_{(i)}|^{2}\frac{(\Sigma_{j=1}^{m}|Z_{j}|^{2})^{k}}{|Z_{i}|^{2k}} \quad {\rm on} \ U_{i}.
\end{equation}

If we denote $r^{2}=\sum_{i=1}^{m}|z^{i}|^{2}$, then we have
\begin{equation}
\pi_{1}^{*}r^{2k}=|\tilde{s}|^{2}_{\tilde{h}}
\end{equation}
on $L$.

Let $\omega_{e}$ be the standard orbifold metric on $\mathbb{C}^{m}/\mathbb{Z}_{k}$, which lifts to the Euclidean metric on $\mathbb{C}^{m}$, we write $\omega_{e}$ as
\begin{equation}
\omega_{e}=\frac{\sqrt{-1}}{2\pi}\Sigma_{i}dz^{i}\wedge d\overline{z^{i}}.
\end{equation}

Denote $\omega_{F}$ be the metric $\omega_{X}$ in Lemma 2.3 in \cite{SW2}, it is a K\"ahler metric on $M_{m,k}$. We will work in a local uniformizing chart around the orbifold point $p\in Y_{n,k}$, which we identify with the unit ball $D_{1}$ in $\mathbb{C}^{m}$.  Then we know that $\omega_{{\rm orb}}$ is uniformly equivalent to the Euclidean metric $\omega_{e}$ on $D_{1}$.  We write $D_{R}$ for the ball in $\mathbb{C}^{m}$ of radius $R>0$.  Then from the section 2 in \cite{SW2}, on $\pi_{1}^{-1}(D_{1}\backslash \{0\})$ we have
\begin{equation}\label{2.19a}
k|\tilde{s}|_{\tilde{h}}^{2(k-1)/k}\pi_{1}^{*}\omega_{e}\leq \omega_{F}\leq \frac{C}{|\tilde{s}|_{\tilde{h}}^{2/k}}\pi_{1}^{*}\omega_{e}
\end{equation}
for some uniform constant $C>0$.  Hence on $\pi_{1}^{-1}(D_{1}\backslash \{0\})$ there exists a uniform constant $C$ such that
\begin{equation}\label{2.20a}
C^{-1}|\tilde{s}|_{\tilde{h}}^{2(k-1)/k}\pi_{1}^{*}\omega_{{\rm orb}}\leq \omega_{F}\leq \frac{C}{|\tilde{s}|_{\tilde{h}}^{2/k}}\pi_{1}^{*}\omega_{{\rm orb}}.
\end{equation}

Now if we denote $\tilde{\omega}=\Phi^{*}({\rm Pr}_{2}^{*}\pi_{1}^{*}\omega_{{\rm orb}}+{\rm Pr}_{1}^{*}\omega_{Y})$, since the bisectional curvature of $\omega_{{\rm orb}}$ is positive, using the same argument of Lemma \ref{2.5}, we obtain

\begin{lemma}\label{2.5a}
There exist uniform constants $C>0$ and $\alpha>0$ such that for $\omega=\omega(t)$ a solution of the K\"ahler-Ricci flow,
\begin{equation}
\omega(t)\leq \frac{C}{|s|_{h}^{2\alpha}|\tilde{s}|_{\tilde{h}}^{2/k}}\tilde{\omega}.
\end{equation}
\end{lemma}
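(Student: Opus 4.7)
The plan is to repeat the maximum-principle argument of Lemma \ref{2.5}, with the $k$-dependent exponents adjusted to match (\ref{2.20a}). For small $\epsilon > 0$ and $\alpha > 0$ to be chosen, I would set
\[
Q_{\epsilon} \;=\; \log\bigl(|s|_{h}^{2\alpha}\,|\tilde{s}|_{\tilde{h}}^{2/k+2\epsilon}\,{\rm tr}_{\tilde{\omega}}\omega\bigr).
\]
For each fixed $t\in(0,T)$ the metric $\omega(t)$ is smooth on $X$, so by (\ref{2.20a}) together with the smoothness of $\tilde{\omega}$ the quantity inside the logarithm blows up at worst polynomially in $|s|_{h}^{-1}$ and in $|\tilde{s}|_{\tilde{h}}^{-2/k}$. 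Consequently $Q_{\epsilon}$ tends to $-\infty$ along $X\setminus\pi^{-1}(U)$ and along $\Phi^{-1}(U\times E_{0})$, forcing the spatial maximum at each time to be attained at an interior point of $\Phi^{-1}(U\times(F\setminus E_{0}))$ where every geometric quantity is smooth.

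Next I would compute
\[
(\partial_{t}-\Delta)Q_{\epsilon} \;=\; (\partial_{t}-\Delta)\log{\rm tr}_{\tilde{\omega}}\omega \;+\; \alpha\,{\rm tr}_{\omega}R(h) \;+\; \bigl(\tfrac{1}{k}+\epsilon\bigr){\rm tr}_{\omega}R(\tilde{h}).
\]
The middle term is bounded above by $C\,{\rm tr}_{\omega}\pi^{*}\omega_{Y}$ because $R(h)$ is the pullback of a bounded $(1,1)$-form on $Y$. The third term is nonpositive on $\pi_{1}^{-1}(D_{1}\setminus\{0\})$: since $\tilde{h}$ is, up to pullback, the metric $h_{FS}^{-k}$, we have $R(\tilde{h}) = -k\,P^{*}R(h_{FS})\leq 0$, so this contribution can simply be discarded. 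For the remaining term I would apply the standard Yau-Aubin-Cao computation (\ref{2.25}) with reference metric $\tilde{\omega}$. Since $\omega_{{\rm orb}}$ has positive bisectional curvature by He-Sun \cite{HS}, the product metric ${\rm Pr}_{2}^{*}\pi_{1}^{*}\omega_{{\rm orb}}+{\rm Pr}_{1}^{*}\omega_{Y}$ on $U\times F$ has bisectional curvature bounded below by that of ${\rm Pr}_{1}^{*}\omega_{Y}$ alone; pulling back by $\Phi$ reproduces the key inequality from Lemma \ref{2.5} and gives $(\partial_{t}-\Delta)\log{\rm tr}_{\tilde{\omega}}\omega \leq C\,{\rm tr}_{\omega}\pi^{*}\omega_{Y}$. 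Combined with Lemma \ref{LowerBound}(4) this produces $(\partial_{t}-\Delta)Q_{\epsilon}\leq C$.

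The maximum principle then forces $Q_{\epsilon}\leq C$ uniformly, and letting $\epsilon\to 0$ yields $|s|_{h}^{2\alpha}\,|\tilde{s}|_{\tilde{h}}^{2/k}\,{\rm tr}_{\tilde{\omega}}\omega \leq C$; since a trace bound dominates every eigenvalue of $\omega$ with respect to $\tilde{\omega}$, this upgrades to the desired metric inequality. The main technical point to monitor is the calibration of the $|\tilde{s}|_{\tilde{h}}$-exponent against the curvature of $\tilde{h}$: because (\ref{2.20a}) allows $\omega$ to degenerate precisely at the rate $|\tilde{s}|_{\tilde{h}}^{-2/k}$ along $E_{0}$, the exponent $2/k$ in $Q_{\epsilon}$ is exactly the one that lets $Q_\epsilon \to -\infty$ at $E_0$ while still giving a uniform upper bound on $Q_\epsilon$, and one must exploit that $\tilde{h}=h_{FS}^{-k}$ has nonpositive curvature so that the corresponding term has the favorable sign. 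Apart from this bookkeeping change from $k=1$ to general $k$, the only new ingredient relative to Lemma \ref{2.5} is He-Sun's orbifold positive-bisectional-curvature metric on the weighted projective space, which replaces the Fubini-Study metric used there.
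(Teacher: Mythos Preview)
Your proposal is correct and follows exactly the route the paper indicates: the paper's own proof consists of the single sentence ``since the bisectional curvature of $\omega_{{\rm orb}}$ is positive, using the same argument of Lemma \ref{2.5}, we obtain'' the result, and you have spelled out precisely that argument with the exponents adjusted via (\ref{2.20a}) and the observation that $R(\tilde{h})\leq 0$. Nothing further is needed.
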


Using the same notations as in Lemma \ref{2.6}, then we have
\begin{lemma}\label{2.6a}
For $\omega=\omega(t)$ a solution of the K\"ahler-Ricci flow. Then there exist uniform constants $C>0$ and $R_{0}=R_{0}(m,k)\in (0,1)$ such that on $D_{R_{0}}\backslash \{0\}$:
\begin{equation}
|\tilde{V}|_{\omega}^{2}\leq Cr^{2k/k+1}.
\end{equation}
Locally, in $D_{R_{0}}\backslash \{0\}$ we have
\begin{equation}
|W|_{g}^{2}\leq \frac{C}{r^{2/(k+1)}},
\end{equation}
for
$$ W=\sum_{i=1}^{m}(\frac{x^{i}}{r}\frac{\partial}{\partial x^{i}}+\frac{y^{i}}{r}\frac{\partial}{\partial y^{i}})$$
the unit length radial vector field with respect to $g_{e}$, where $z^{i}=x^{i}+\sqrt{-1}y^{i}.$
\end{lemma}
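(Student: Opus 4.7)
The plan is to adapt the proof of Lemma \ref{2.6} to the weighted projective space setting, using Lemma \ref{2.5a} and the bounds (\ref{2.19a})--(\ref{2.20a}) in place of Lemma \ref{2.5} and Lemma \ref{2.4}; a crucial additional input is the positivity of the bisectional curvature of the orbifold K\"ahler metric $\omega_{{\rm orb}}$ on $Y_{m,k}$ (He--Sun \cite{HS}), which plays the role of the analogous statement for $\omega_{FS}$ on $\mathbb{P}^m$. As in (\ref{2.41}), the first step is the pointwise inequality
\begin{equation*}
\bigl(\tfrac{\partial}{\partial t}-\Delta\bigr)\log|\tilde V|_\omega^2\leq 0,
\end{equation*}
which follows from the standard Cauchy--Schwarz computation for the norm of a holomorphic vector field under the K\"ahler--Ricci flow and is independent of the fiber geometry.

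The main step is to apply the maximum principle to the quantity
\begin{equation*}
Q_\epsilon=\log\bigl(|\tilde V|_\omega^{2+2\epsilon}\,|s|_h^{2\alpha+2\epsilon}\,|\tilde s|_{\tilde h}^{2\sigma+2\epsilon}\,{\rm tr}_{\Phi^*({\rm Pr}_2^*\pi_1^*\tilde\omega_F+{\rm Pr}_1^*\omega_Y)}\omega\bigr)-At,
\end{equation*}
where $\tilde\omega_F$ is an auxiliary smooth Hermitian form on $Y_{m,k}$ agreeing with $\omega_e$ in the orbifold chart around the orbifold point, $A>0$ is a large constant, and $\sigma>0$ is a parameter to be chosen in terms of $k$. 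The extra factor $|\tilde s|_{\tilde h}^{2\sigma}$, which is absent in the proof of Lemma \ref{2.6}, is needed because the metric bounds in (\ref{2.19a})--(\ref{2.20a}) are more degenerate than those in Lemma \ref{2.4}; it drives $Q_\epsilon\to-\infty$ along $\Phi^{-1}(U_{1/2}\times E_0)$, so that the supremum is attained at an interior point. In computing $(\tfrac{\partial}{\partial t}-\Delta)Q_\epsilon$ at the interior maximum, the $\log|\tilde V|_\omega^2$ contribution is $\leq 0$ by the first step; the $|s|_h$ term contributes $(\alpha+\epsilon){\rm tr}_\omega R(h)$, bounded via Lemma \ref{LowerBound}; the $|\tilde s|_{\tilde h}$ term contributes $(\sigma+\epsilon){\rm tr}_\omega R(\tilde h)\leq 0$, since $R(\tilde h)$ is $-k$ times the pullback of $\omega_{FS}$ from $\mathbb{P}^{m-1}$ and hence non-positive; and the trace term is controlled via (\ref{2.25}) together with the positive bisectional curvature of $\omega_{{\rm orb}}$. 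Taking $A$ large absorbs the bounded terms, and the maximum principle gives $Q_\epsilon\leq C$. Letting $\epsilon\to 0$ and restricting to $U_{1/2}$ yields
\begin{equation*}
|\tilde V|_\omega^2\cdot|\tilde s|_{\tilde h}^{2\sigma}\cdot{\rm tr}_{\Phi^*({\rm Pr}_2^*\pi_1^*\tilde\omega_F+{\rm Pr}_1^*\omega_Y)}\omega\leq C.
\end{equation*}

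The final pointwise bound on $|\tilde V|_\omega^2$ is extracted by combining this inequality with the Cauchy--Schwarz estimate $|\tilde V|_\omega^2\leq({\rm tr}_{\omega_0}\omega)|\tilde V|_{\omega_0}^2$. Two local computations are essential: in the chart $U_i$ of $M_{m,k}$, the lift of the Euler vector field takes the form $\tilde V=k y_{(i)}\partial/\partial y_{(i)}$, so that $|\tilde V|_{\omega_0}^2\leq Cr^{2k}$ (using $|y_{(i)}|\leq|\tilde s|_{\tilde h}=r^k$) and $|\tilde V|_{\pi_1^*\omega_e}^2=r^2$ (via the identity $\pi_1^*r^{2k}=|\tilde s|_{\tilde h}^2$). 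The bounds (\ref{2.19a}) allow one to pass between ${\rm tr}_{\tilde\omega}\omega$ and ${\rm tr}_{\omega_0}\omega$ at the cost of factors of $|\tilde s|_{\tilde h}^{\pm2(k-1)/k}$; assembling these and selecting $\sigma$ to balance the various exponents of $|\tilde s|_{\tilde h}$ yields the claimed bound $|\tilde V|_\omega^2\leq Cr^{2k/(k+1)}$. The second estimate follows from $|W|_g^2=|\tilde V|_\omega^2/r^2$ up to a bounded factor (since $|\tilde V|_{g_e}^2=r^2$), giving $|W|_g^2\leq Cr^{-2/(k+1)}$. The main technical obstacle is the careful bookkeeping of the $|\tilde s|_{\tilde h}$-exponents through the maximum principle step and the final combination, which is what singles out the rate $2k/(k+1)$ and recovers Lemma \ref{2.6} (with no $|\tilde s|_{\tilde h}^{2\sigma}$ factor needed) in the case $k=1$.
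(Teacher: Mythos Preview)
Your overall strategy is sound in spirit---run the maximum-principle argument of Lemma~\ref{2.6} with $\pi_1^*\omega_e$ (or $\pi_1^*\omega_{{\rm orb}}$) in place of $\pi_1^*\omega_{FS}$, and the inequality $(\partial_t-\Delta)\log|\tilde V|_\omega^2\leq 0$ is unchanged---but the specific modification you propose cannot produce the exponent $2k/(k+1)$. First, the extra factor $|\tilde s|_{\tilde h}^{2\sigma}$ is not needed for the barrier: at fixed $t$ one has $|\tilde V|_\omega^2\sim|\tilde V|_{\omega_0}^2\sim|\tilde s|_{\tilde h}^2=r^{2k}$ (since $\tilde V=ky_{(i)}\partial_{y_{(i)}}$ vanishes to first order along $E_0$ in the smooth $M_{m,k}$-coordinates) while ${\rm tr}_{\pi_1^*\omega_e+\omega_Y}\omega\leq C_tr^{-2}$ by \eqref{2.19a}, so the unmodified $Q_\epsilon$ already tends to $-\infty$ along $E_0$. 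Second, and decisively, inserting $|\tilde s|_{\tilde h}^{2\sigma}$ with $\sigma>0$ only \emph{weakens} the output: after the maximum principle you get $|\tilde V|_\omega^2\,r^{2k\sigma}\,{\rm tr}_{\pi_1^*\omega_e+\omega_Y}\omega\leq C$, and combining with $|\tilde V|_\omega^2\leq r^2\,{\rm tr}_{\pi_1^*\omega_e+\omega_Y}\omega$ (using $|\tilde V|_{\pi_1^*\omega_e}^2=r^2$) yields $|\tilde V|_\omega^4\leq Cr^{2-2k\sigma}$, i.e.\ $|\tilde V|_\omega^2\leq Cr^{1-k\sigma}$. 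The best case is $\sigma=0$, giving only $|\tilde V|_\omega^2\leq Cr$; no $\sigma>0$ reaches $2k/(k+1)>1$, and $\sigma<0$ destroys both the barrier and the sign of $(\sigma+\epsilon){\rm tr}_\omega R(\tilde h)$.

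The device the paper is pointing to (via \eqref{2.19a} and the argument in \cite{SW2}) is not an extra defining-section factor but a different \emph{power} of $|\tilde V|_\omega^2$ in the test quantity: take
\[
Q_\epsilon=\log\bigl(|\tilde V|_\omega^{2/k+2\epsilon}\,|s|_h^{2\alpha+2\epsilon}\,{\rm tr}_{\Phi^*({\rm Pr}_2^*\pi_1^*\tilde\omega_F+{\rm Pr}_1^*\omega_Y)}\omega\bigr)-At.
\]
Since $(\partial_t-\Delta)\log|\tilde V|_\omega^{2/k}=\tfrac{1}{k}(\partial_t-\Delta)\log|\tilde V|_\omega^2\leq 0$, the heat-operator step is unchanged, and the barrier at $E_0$ still holds because $|\tilde V|_\omega^{2/k}\sim r^2$ exactly cancels the $r^{-2}$ coming from the trace. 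The conclusion is $|\tilde V|_\omega^{2/k}\,{\rm tr}_{\pi_1^*\omega_e+\omega_Y}\omega\leq C$, and the same Cauchy--Schwarz combination now gives $|\tilde V|_\omega^{2(1+1/k)}\leq Cr^2$, i.e.\ $|\tilde V|_\omega^2\leq Cr^{2k/(k+1)}$, which specializes to Lemma~\ref{2.6} when $k=1$.
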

\begin{proof}
We using (\ref{2.19a}) and using the same argument in the proof of of Lemma \ref{2.6}, we can obtain the Lemma.
\end{proof}

Using the same argument in the proof of Lemma \ref{2.7}, we can estimate on the lengths of spherical and radial paths in the punctured ball $D_{R_{0}}\backslash \{0\}$.
\begin{lemma}\label{2.7a}
We have
\begin{itemize}
\item[(1)] For any $y\in Y$ and for $0<r<R_{0}$, the diameter of the $2m-1$ sphere $S_{r}$ of radius $r$ in $D_{R_{0}}$ centered at the origin with the metric induced from $\omega|_{\pi^{-1}(y)}$ is uniformly bounded from above, independent of $r$ and $y$.
\item[(2)] For any $z\in D_{R_{0}}\backslash \{0\}$, the length of a radial path $\gamma(\lambda)=\lambda z$ for $\lambda\in (0, 1]$ with respect to $\omega|_{\pi^{-1}(y)}$ is uniformly bounded from above by a uniform constant multiple of $|z|^{k/(k+1)}$.
\end{itemize}
Hence the diameter of $D_{R_{0}}\backslash \{0\}$ with respect to $\omega|_{\pi^{-1}(y)}$ is uniformly bounded from above and
$$ {\rm diam}(\pi^{-1}(y), \omega|_{\pi^{-1}(y)})\leq C.$$
\end{lemma}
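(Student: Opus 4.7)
My plan is to follow the template of Lemma \ref{2.7} (whose argument in turn is adapted from Lemma 2.7 of \cite{SW1}), substituting Lemma \ref{2.5a} and Lemma \ref{2.6a} for their $\mathbb{P}^m$-blowup counterparts Lemma \ref{2.5} and Lemma \ref{2.6}. First I fix $y\in Y$ and, by choosing the local trivialization at $\pi(y)$ appropriately, arrange that $|s|_h(y)=1$. Restricting Lemma \ref{2.5a} to the fiber $\pi^{-1}(y)$, where the ${\rm Pr}_{1}^{*}\omega_{Y}$ contribution to $\tilde{\omega}$ vanishes, yields
\begin{equation*}
\omega|_{\pi^{-1}(y)} \leq \frac{C}{|\tilde{s}|_{\tilde{h}}^{2/k}} \pi_{1}^{*}\omega_{{\rm orb}}
\end{equation*}
on $\pi_{1}^{-1}(D_{R_{0}}\setminus\{0\})$, with $\omega_{{\rm orb}}$ uniformly equivalent to the Euclidean metric on $D_{1}$.

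For part (1), the identity $|\tilde{s}|_{\tilde{h}}^{2}=r^{2k}$ on $\pi_{1}^{-1}(D_{1}\setminus\{0\})$ collapses the right-hand side to $(C/r^{2})\pi_{1}^{*}\omega_{{\rm orb}}$ on $S_{r}$. Since the $\omega_{e}$-diameter of $S_{r}$ is of order $r$, the $\omega$-diameter of $S_{r}$ is bounded by a uniform constant, independent of $r$ and $y$.

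For part (2), writing $\dot\gamma=|z|W$ with $W$ the unit radial vector of Lemma \ref{2.6a} gives $|\dot\gamma|_{\omega}\leq \sqrt{C}\,|z|(\lambda|z|)^{-1/(k+1)}$; integrating over $\lambda\in(0,1]$ yields length at most $C'|z|^{k/(k+1)}$, the integral $\int_{0}^{1}\lambda^{-1/(k+1)}\,d\lambda$ converging since $1/(k+1)<1$. Combining (1) and (2), any two points in $D_{R_{0}}\setminus\{0\}$ can be connected by two radial arcs meeting a common sphere $S_{R_{0}/2}$ joined by a sphere path, each leg of uniformly bounded $\omega$-length.

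For the full fiber: on $F\setminus \pi_{1}^{-1}(D_{R_{0}/2})$ the section $|\tilde{s}|_{\tilde{h}}$ stays bounded below, so Lemma \ref{2.5a} gives a uniform $\omega$-bound on this compact piece and its $\omega$-diameter is bounded. To absorb the exceptional divisor $E_{0}$, for each $q\in E_{0}$ I pick a radial path whose $\lambda\to 0^{+}$ limit is $q$; by (2) its $\omega$-length is at most $CR_{0}^{k/(k+1)}$, so $q$ is within bounded $\omega$-distance of $S_{R_{0}/2}$, and the triangle inequality closes the argument. The step I expect to be most delicate is precisely this last one: identifying the $\lambda\to 0^{+}$ limit of $\gamma(\lambda)=\lambda z$ with a well-defined point on $E_{0}$ depending on $[z]\in\mathbb{P}^{m-1}$ requires the explicit description of the resolution $\pi_{1}: M_{m,k}\to Y_{m,k}$, and one must check that every point of $E_{0}$ arises as such a limit so that no portion of the fiber is left uncovered by the estimate.
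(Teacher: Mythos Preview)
Your proposal is correct and follows precisely the approach indicated in the paper, which simply states ``Using the same argument in the proof of Lemma \ref{2.7}'' without further detail. You have accurately identified Lemmas \ref{2.5a} and \ref{2.6a} as the substitutes for Lemmas \ref{2.5} and \ref{2.6}, and your computations for the spherical and radial bounds (including the exponent $k/(k+1)$ from integrating $\lambda^{-1/(k+1)}$) are correct; the extra care you take in covering $E_{0}$ via radial limits is implicit in the \cite{SW1} argument the paper defers to.
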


Then using the same argument in Section 3, we can prove the diameter of the fiber along the metrics $g(t)$ tend to zero, then we obtain
\begin{theorem}\label{MainThm1b}
Let $(X, Y, \pi, F)$ be a Fano bundle with $F$ is $M_{m,k}(1\leq k<m)$,  $\omega_{Y}$ be a K\"ahler metric on $Y$ and $\omega_{0}$ be a K\"ahler metric on $X$. Assume $\omega(t)$ is a solution of the K\"ahler-Ricci flow (\ref{KRF}) for $t\in [0, T)$ with initial metric $\omega_{0}$ and $[\omega_{0}]-2\pi Tc_{1}(X)=[\pi^{*}\omega_{Y}]$, then we have
\begin{itemize}
\item[(1)] ${\rm diam}(X, \omega(t))\leq C$ for some uniform constant $C>0$;
\item[(2)] There exists a sequence of times $t_{i}\rightarrow T$ and a distance function $d_{Y}$ on $Y$ (which is uniformly equivalent to the distance induced by $\omega_{Y}$, such that $(X,\omega(t_{i}))$ converge to $(Y,d_{Y})$ in the Gromov-Hausdorff sense.
\end{itemize}
\end{theorem}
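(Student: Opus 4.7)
The plan is to mirror the architecture of the proof of Theorem~\ref{MainThm1a}, replacing the key exponents coming from the $\mathbb{P}^{m}$ blow-up by the exponents appropriate to $M_{m,k}$, and using the estimates Lemma~\ref{2.5a}, Lemma~\ref{2.6a}, Lemma~\ref{2.7a} already established in Section~4 in place of Lemma~\ref{2.5}, Lemma~\ref{2.6}, Lemma~\ref{2.7}.

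For part (1), I would repeat the argument at the end of Subsection~3.1 verbatim. Given $p,q\in X$, pick Zariski opens $U_{1},U_{2}\subset Y$ with trivializations $\Phi_{1},\Phi_{2}$ covering $\pi(p),\pi(q)$ and a common point $\tilde{p}_{1}\in U_{1}\cap U_{2}$. Use Lemma~\ref{Hestimate} to move $p\mapsto\tilde{p}=\Phi_{1}^{-1}(\tilde{p}_{1},\mathrm{Pr}_{2}\Phi_{1}(p))$ and $q\mapsto\tilde{q}=\Phi_{2}^{-1}(\tilde{p}_{1},\mathrm{Pr}_{2}\Phi_{2}(q))$ at uniformly bounded $\omega(t)$-cost, and then close up inside the single fiber $\pi^{-1}(\tilde{p}_{1})$ using Lemma~\ref{2.7a}. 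Since Lemma~\ref{2.7a} gives a uniform fiber diameter bound, this step is routine.

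For part (2), the core work is the analogue of Lemma~\ref{EstimateFiber}: fiber diameter tends to zero as $t\to T$. First prove an analogue of Lemma~\ref{3.2}: for $p,q$ in the exceptional divisor $E_{0}\subset M_{m,k}$ in a fixed fiber, $d_{\omega}(\Phi^{-1}(y_{0},p),\Phi^{-1}(y_{0},q))\leq C(T-t)^{1/3}$, by the same cohomology argument (integrate $\omega(t)$ against a curve in $E_{0}$ and couple with a thin tube). Then an analogue of Lemma~\ref{3.3} follows: using Lemma~\ref{2.7a}(2) the radial path of length $\leq C|p|^{k/(k+1)}$ replaces the $|p|^{1/2}$ path, giving
\begin{equation*}
\mathrm{diam}_{\omega(t)}\bigl(\Phi^{-1}(\{y_{0}\}\times\pi_{1}^{-1}(D_{\delta_{0}}))\bigr)\leq C\bigl(\delta_{0}^{k/(k+1)}+(T-t)^{1/3}\bigr).
\end{equation*}
For points outside $\pi_{1}^{-1}(D_{\delta_{0}})$, apply Lemma~\ref{2.5a}: since $|\tilde{s}|_{\tilde{h}}^{2/k}=r^{2}$ in the orbifold chart, the bound $\omega(t)|_{\pi^{-1}(y_{0})}\leq C\delta_{0}^{-2}\pi_{1}^{*}\omega_{\mathrm{orb}}$ on the complement is the same $r^{-2}$ blow-up as in the $\mathbb{P}^{m}$-blow-up case. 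The horizontal-path argument with $\sigma=(T-t)^{1/3}$ then produces a vertical cost $\leq C\sigma/\delta_{0}^{2}$ as before. Optimize: set $\delta_{0}^{k/(k+1)}=\sigma/\delta_{0}^{2}$, giving $\delta_{0}=(T-t)^{(k+1)/(3(3k+2))}$ and fiber-diameter exponent $\kappa_{k}:=k/(3(3k+2))$. Thus $d_{\omega(t)}(p,q)\leq C(T-t)^{\kappa_{k}}$ for $p,q$ in the same fiber. For $k=1$ this recovers $1/15$, consistent with $M_{m,1}$ being $\mathbb{P}^{m}$ blown up at one point.

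Finally, for Gromov--Hausdorff convergence, apply the full argument of Subsection~3.3 with the new fiber-collapse exponent $\kappa_{k}$ in place of $1/15$: the equicontinuity estimate (\ref{Equi}) becomes $d_{t}(x,x')\leq C[(T-t)^{\kappa_{k}}+d_{0}(x,x')]$, and then the Arzel\`a--Ascoli extraction of a uniformly convergent subsequence $d_{t_{i}}\to d_{\infty}$, the verification that $d_{\infty}$ descends to a distance $d_{Y,\infty}$ on $Y$ uniformly equivalent to $d_{\omega_{Y}}$ (lower bound from Lemma~\ref{LowerBound}(4), upper bound from (\ref{Equi}) together with a covering of $Y$ by the $U^{i}_{1/3}$'s), and the concluding Gromov--Hausdorff statement all go through with no change. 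The main obstacle is the bookkeeping in the exponent optimization; one must be careful that the local identification of the weighted-projective orbifold chart with $\mathbb{C}^{m}/\mathbb{Z}_{k}$ and the explicit relation $|\tilde{s}|_{\tilde{h}}^{2}=r^{2k}$ conspire to keep the $r^{-2}$ blow-up of $\omega(t)$ intact, so that only Lemma~\ref{2.7a}(2) (not Lemma~\ref{2.5a}) supplies a new exponent, namely $k/(k+1)$.
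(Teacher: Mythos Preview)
Your proposal is correct and follows exactly the paper's own approach: the paper's proof of Theorem~\ref{MainThm1b} is the single sentence ``Then using the same argument in Section~3, we can prove the diameter of the fiber along the metrics $g(t)$ tend to zero, then we obtain [the theorem]'', and you have spelled out precisely this transfer, substituting Lemmas~\ref{2.5a}, \ref{2.6a}, \ref{2.7a} for Lemmas~\ref{2.5}, \ref{2.6}, \ref{2.7} and tracking the changed exponents. Your observation that $|\tilde{s}|_{\tilde{h}}^{2/k}=r^{2}$ keeps the $r^{-2}$ blow-up in Lemma~\ref{2.5a} identical to the $\mathbb{P}^{m}$ case, so that only the radial exponent $k/(k+1)$ from Lemma~\ref{2.7a}(2) is new, is exactly the point; your optimization $\delta_{0}=(T-t)^{(k+1)/(3(3k+2))}$ yielding fiber exponent $\kappa_{k}=k/(3(3k+2))$ correctly specializes to $1/15$ at $k=1$, matching Lemma~\ref{EstimateFiber}.

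One small caution: you assume the analogue of Lemma~\ref{3.2} (shrinking of $E_{0}$) retains the exponent $1/3$. The proof of Lemma~\ref{3.2} in \cite{SW1} itself invokes the radial and trace estimates, so in principle the exponent could shift with $k$ (the relevant reference is now \cite{SW2} rather than \cite{SW1}). This does not affect the architecture---any positive exponent there feeds into the same optimization---but if you want sharp values for $\kappa_{k}$ you should re-examine that step. The paper does not track this, so your level of detail already exceeds what is written.
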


\section*{Acknowledgements}
This is part of the first author's thesis at Rutgers University. We thank Professor Jian Song for suggesting the problem and many helpful suggestions.  The work is carried out during the second author's visit at Rutgers University. He would like to thank the Department of Mathematics for its hospitality. His research is partially supported by NSFC Grant No.11301017, Research Fund for the Doctoral Program of Higher Education of China and the Fundamental Research Funds for the Central Universities and the Scholarship from China Scholarship Council.


\begin{thebibliography}{}
\bibitem {A} Aubin, T., \'Equations du type Monge-Amp\`ere sur les vari\'et\'es k\"ahl\'eriennes compactes, Bull. Sci. Math., (2) 102(1978), 63-95.
\bibitem {BCHM} Birkar, C., Cascini, P., Hacon, C. and McKernan, J., Existence of minimal models for varieties of log general type, J. Amer. Math. Soc., 23(2010), 405-468.
\bibitem {Calabi} Calabi, E., Extremal K\"ahler metrics, Seminar on Differential Geometry, Annals of Mathematical Studies 102 (Princeton University Press, Princeton, NJ, 1982) 259-290.
\bibitem {Cao} Cao, H., Deformation of K\"ahler metrics to K\"ahler-Einstein metrics on compact K\"ahler manifolds, Invent. Math. 81(1985), 359-372.
\bibitem {Chen-Tian1} Chen, X. X. and Tian, G., Ricci flow on K\"ahler-Einstein surfaces, Invent. Math., 147(2002), 487-544.
\bibitem {Chen-Tian2} Chen, X. X. and Tian, G., Ricci flow on K\"ahler-Einstein manifolds, Duke Math. J. 131 (2006), 17-73.
\bibitem {Chen-Wang} Chen, X. X. and Wang, B., The K\"ahler-Ricci flow on Fano manifolds (I), J. Eur. Math. Soc., 14(2012), 2001-2038.
\bibitem {Chow et.} Chow, B., Chu, S.-C., Glickenstein, D., Guenther, C., Isenberg, J., Ivey, T., Knopf, D., Lu, P., Luo, F. and Ni, L., The Ricci flow: techniques and applications. Part I. Geometric aspects. Mathematical Surveys and Monographs, 135, American Mathematical Society, Province, RI, 2007.
\bibitem {Collins-Tosatti} Collins, T. and Tosatti, V., K\"ahler currents and null loci, Invent. Math., 202(2015), 1167-1198.
\bibitem {FIK} Feldman, M., Ilmanen, T. and Knopf, D., Rotationally symmetric shrinking and expanding gradient K\"ahler-Ricci solitons, J. Differential Geom.,  65 (2003), no. 2, 169-209.
\bibitem {Fong1} Fong, F. T.-H, K\"ahler-Ricci flow on projective bundles over K\"ahler-Einstein manifolds, Trans. Amer. Math. Soc., 366(2014), no. 2, 563-589.
\bibitem {Fong2} Fong, F. T.-H, On the collapsing rate of the K\"ahler-Ricci flow rate of the K\"ahler-Ricci flow with finite-time singularity, J. Geom. Anal., 25(2015), no. 2, 1098-1107.
\bibitem {Fong-Zhang} Fong, F. T.-H. and Zhang, Z., The collapsing rate of the K\"ahler-Ricci flow with regular infinite time singularity, J. Reine  Angew. Math. 703 (2015), 95-113.
\bibitem {GH} Griffiths, P. and Harris, J., Principles of algebraic geometry, Pure Appl. Math., Wiley-Interscience, New York, 1978.
\bibitem {Gill} Gill, M., Collapsing of products along the K\"ahler-Ricci flow, Trans. Amer. Math. Soc., 366 (2014), no.7, 3907-3924.
\bibitem {Guo} Guo, B., On the K\"ahler-Ricci flow on projective manifolds of general type, arXiv:1501.04239.
\bibitem {Guo-Song-Weinkove} Guo, B., Song, J. and Weinkove, B., Geometric convergence of the K\"ahler-Ricci flow on complex surfaces of general type, arXiv:1505.00705, to appear in IMRN.
\bibitem {H} Hamilton, R. S., Three manifolds with positive Ricci curvature, J. Differential Geom., 17 (1982), no.2, 255-306.
\bibitem {HM} Hacon, C. and McKernan, J., Existence of minimal models for varieties of log general type. II, J. Amer. Math. Soc., 23(2010), 469-490.
\bibitem {HS} He, W. and Sun, S., Frankel conjecture and Sasaki geometry, Adv. Math., 291 (2016), 912-960.
\bibitem {Ilmanen-Knopf} Ilmanen, T. and Knopf, D., A lower bound for the diameter of solutions to the Ricci flow with nonzero $H^{1}(M^{n}; \mathbb{R})$, Math. Res. Lett., 10 (2003), no 2-3, 161-168.
\bibitem {Nave-Tian} La Nave, G. and Tian, G., Soliton-type metrics and K\"ahler-Ricci flow on symplectic quotients, J. Reine Angew. Math. 711(2016), 139-166.
\bibitem {P1} Perelman, G., The entropy formula for the Ricci flow and its geometric applications, arXiv: math.DG/0211159.
\bibitem {P2} Perelman, G., Ricci flow with Surgery on Three-manifolds, arXiv: math.DG/0303109.
\bibitem {P3} Perelman, G., Finite Extinction Time for the Solutions to the Ricci flow on Certain Three-Manifolds, arXiv:math.DG/0307245.
\bibitem {Phong-Sturm} Phong, D. H. and Sturm, J., On stability and the convergence of the K\"ahler-Ricci flow, J. Diff. Geom., 72(2006),149-168.
\bibitem {PSSW1} Phong, D. H., Song, J., Sturm, J. and Weinkove, B., The K\"ahler-Ricci flow and the $\overline{\partial}$-operator on vector fields, J. Diff. Geom., 81(2009), 631-647.
\bibitem {PSSW2} Phong, D. H., Song, J., Sturm, J. and Weinkove, B., The K\"ahler-Ricci flow with positive bisectional curvature, Invent. Math., 173(2008), 651-665.
\bibitem {PSSW3} Phong, D. H., Song, J., Sturm, J. and Weinkove, B., On the convergence of the modified K\"ahler-Ricci flow and solitons, Comment. Math. Helv. 86(2011), 91-112.
\bibitem {Sesum-Tian} Sesum, N. and Tian, G., Bounding scalar curvature and diameter along the K\"ahler-Ricci flow (after Perelman), J. Inst. Math. Jussieu 7 (2008), 575-587.
\bibitem {Song1}  Song, J., Finite time extinction of the K\"ahler-Ricci flow, Math. Res. Lett., 21(2014), no. 6, 1435-1449.
\bibitem {Song2} Song, J., Ricci flow and birational surgery, preprint, arxiv: 1304.2607.
\bibitem {SSW} Song, J., Sz\'ekelyhidi, G. and Weinkove, B., The K\"ahler-Ricci flow on projective bundles, IMRN, 2013, No.2, 243-257.
\bibitem {ST1} Song, J. and Tian, G., The K\"ahler-Ricci flow on surfaces of positive Kodaira dimension, Invent. Math., 170(2007), No.3, 609-653.
\bibitem {ST2} Song, J. and Tian, G., Canonical measures and K\"ahler-Ricci flow, J. Amer. Math. Soc., 25(2012), No.2, 303-353.
\bibitem {ST3} Song, J. and Tian, G., The K\"ahler-Ricci flow through singularities, arxiv: 0909.4898, to appear in Invent. Math.
\bibitem {ST4} Song, J. and Tian, G., Bounding scalar curvature for global solutions of the K\"ahler-Ricci flow, American Journal of Mathematics, 138 (2016), no.2.
\bibitem {SW0} Song, J. and Weinkove, B., The K\"ahler-Ricci flow on Hirzebruch surfaces, J. Reine Angew. Math., 659(2011), 141-168.
\bibitem {SW1} Song, J. and Weinkove, B.,  Contracting exceptional divisors by the K\"ahler-Ricci flow, Duke Math. Jour., 162(2013), No.2, 367-415.
\bibitem {SW2} Song, J. and Weinkove, B., Contracting exceptional divisors by the K\"ahler-Ricci flow II, Proc. London Math. Soc., 108(2014), No. 3, 1529-1561.
\bibitem {SW3} Song, J. and Weinkove, B., Introduction to the K\"ahler-Ricci flow, Chapter 3 of 'Introduction to the K\"ahler-Ricci flow', eds S. Boucksom, P. Eyssidieux, V. Guedj, Lecture Notes Math. 2086, Springer 2013.
\bibitem {Song-Yuan} Song, J. and Yuan, Y., Metric flips with Calabi ansatz, Geom. Funct. Anal., 22(2012), no. 1, 240-265.
\bibitem {Szekely} Szekelyhidi, G., The K\"ahler-Ricci flow and K-polystability, Amer. J. Math., 132 (2010), 1077-1090.
\bibitem {Tian1} Tian, G., New results and problems on K\"ahler-Ricci flow, Ast\'erisque No. 322(2008), 71-92.
\bibitem {Tian2} Tian, G., Finite-time singularity of K\"ahler-Ricci flow, Discrete Contin. Dyn. Syst., 28(2010), no. 3, 1137-1150.
\bibitem {TZZZ} Tian, G., Zhang, S. J., Zhang, Z. L. and Zhu, X. H., Perelman's entropy and K\"ahler-Ricci flow on a Fano manifold, Trans. Amer. Math. Soc. 365(2013), 6669-6695.
\bibitem {Tian-Zhang} Tian, G. and Zhang, Z.,  On the K\"ahler-Ricci flow on projective manifolds of general type, Chinese Ann. Math. Ser. B 27(2006), No. 2, 179-192.
\bibitem {Tian-Zhangzl} Tian, G. and Zhang, Z.-L., Convergence of K\"ahler-Ricci flow on lower dimensional algebraic manifolds of general type, arXiv: 1505.01038.
\bibitem {Tian-Zhangzl1} Tian, G. and Zhang, Z.-L., Regularity of K\"ahler-Ricci flows on Fano manifolds, Acta. Math., 216(2016), 127-176.
\bibitem {Tian-Zhu1} Tian, G. and Zhu, X. H., Convergence of K\"ahler-Ricci flow, J. Amer. Math. Soc., 20(2007) 675-699.
\bibitem {Tian-Zhu2} Tian, G. and Zhu, X. H., Convergence of the K\"ahler-Ricci flow on Fano manifolds, J. Reine Angew. Math. 678(2013), 223-245.
\bibitem {TWY} Tosatti, V., Weinkove, B. and Yang, X., The K\"ahler-Ricci flow, Ricci flat metrics and collapsing limits, arXiv: 1408.0161.
\bibitem {Tosatti-Zhang1} Tosatti, V. and Zhang, Y., Infinite time singularities of the K\"ahler-Ricci flow, Geom. Topol., 19(2015), 2925-2948.
\bibitem {Tosatti-Zhang2} Tosatti, V. and Zhang, Y., Finite time collapsing of the K\"ahler-Ricci flow on threefolds, arXiv: 1507.08397.
\bibitem {Tsuji} Tsuji, H., Existence  and degeneration of K\"ahler-Einstein metrics on minimal algebraic varieties of general type, Math. Ann., 281 (1988), no. 1, 123-133.
\bibitem {Yau} Yau, S., On the Ricci curvature of a compact K\"ahler manifold and the complex Monge-Amp\`ere equation. I, Comm. Pure Appl. Math. 31 (1978), 339-411.
\bibitem {Zhang1} Zhang, Z., Scalar curvature behavior for finite-time singularity of K\"ahler-Ricci flow, Michigan Math. J., 59(2010), no. 2, 419-433.
\bibitem {Zhang2} Zhang, Z., Ricci lower bound for K\"ahler-Ricci flow, Commun. Contemp. Math., 16(2014), no. 2, 1350053, 11pp.
\end{thebibliography}
\end{document}